\pgfplotsset{compat=newest}
\pgfplotsset{compat=newest}
\pgfplotsset{compat=newest}
\definecolor{color0}{rgb}{0.12156862745098,0.466666666666667,0.705882352941177}
\definecolor{color1}{rgb}{1,0.498039215686275,0.0549019607843137}
\definecolor{color2}{rgb}{0.172549019607843,0.627450980392157,0.172549019607843}
\definecolor{color3}{rgb}{0.83921568627451,0.152941176470588,0.156862745098039}
\definecolor{color4}{rgb}{0.580392156862745,0.403921568627451,0.741176470588235}
\definecolor{color5}{rgb}{0,0,0}
\definecolor{mycolor1}{rgb}{0.00000,0.44700,0.74100}
\definecolor{mycolor2}{rgb}{0.85000,0.32500,0.09800}
\definecolor{mycolor3}{rgb}{0.92900,0.69400,0.12500}
\definecolor{mycolor4}{rgb}{0.46600,0.67400,0.18800}
\definecolor{mycolor5}{rgb}{0.49400,0.18400,0.55600}
\newcommand{\lineWidth}{1.2pt}
\newcommand{\imageWidth}{2.0in}
\newcommand{\imageHeight}{1.8in}
\definecolor{color2}{rgb}{0.172549019607843,0.627450980392157,0.172549019607843}
\newcommand*\closure[1]{\overline{#1}}
\newcommand{\ReA}[1]{{\color{black}#1}} 
\DeclareMathOperator{\spann}{span}
\newcommand{\Li}{\mathcal{L}}
\newcommand{\X}{{\mathscr X}}
\newcommand{\U}{{\mathscr U}}
\newcommand{\bx}{{\bm x}}
\newcommand{\bs}{{\bm s}}
\newcommand{\R}{\mathbb R}
\newcommand{\N}{\mathbb N}
\newcommand{\Uad}{\U_{\mathsf{ad}}}
\newcommand{\timeInt}{\mathbb{T}}
\newcommand{\state}{\theta}
\newcommand{\adState}{p}
\newcommand{\desired}[1]{#1_{\mathrm{d}}}
\newcommand{\inpVar}{u}
\newcommand{\inpVarDes}{\desired{u}}
\newcommand{\inpVarDim}{\rho}
\newcommand{\outVar}{y}
\newcommand{\outVarDes}{\desired{y}}
\newcommand{\hatoutVarDes}{\desired{\hat y}}
\newcommand{\outVarDim}{p}
\newcommand{\lonereg}{c_{L^1}}
\newcommand{\dt}{\,\mathrm{d}t}
\newcommand{\ddt}{\tfrac{\mathrm{d}}{\mathrm{d}t}}
\newcommand{\ds}{\,\mathrm{d}s}
\newcommand{\finalTime}{\overline{T}}
\newcommand{\coerAV}{\eta_V}
\newcommand{\coerAH}{\eta_H}
\newcommand{\contA}{\gamma_a}
\newcommand{\coerM}{\eta_m}
\newcommand{\coerMi}[1]{\eta_{m_{#1}}}
\newcommand{\contM}{\gamma_m}
\newcommand{\contMi}[1]{\gamma_{m_{#1}}}
\newcommand{\contB}{\gamma_b}
\newcommand{\contC}{\gamma_c}
\newcommand{\mOper}{\calM}
\newcommand{\aOper}{\calA}
\newcommand{\bOper}{\calB}
\newcommand{\cOper}{\calC}
\newcommand{\kOper}{\calK}
\newcommand{\difOper}{\calL}
\newcommand{\solOper}{\calS_{0}}
\newcommand{\solper}{\calS}
\newcommand{\solA}{\mathcal{A}}
\newcommand{\errortol}{\epsilon}
\newcommand{\initBound}{\initBoundn{n}}
\newcommand{\initBoundn}[1]{\Delta_{t_{#1}}}
\newcommand{\intermediateInitState}{\intermediateState_n}
\newcommand{\intermediateInitStateRed}{\stateRed_n}
\newcommand{\optState}{\state^\star}
\newcommand{\optAdState}{\adState^\star}
\newcommand{\optInpVar}[1]{\inpVar_{#1}^\star}
\newcommand{\optInpVarN}{\optInpVar{n}}
\newcommand{\optOutVar}{\outVar^\star}
\newcommand{\stateMPC}{\state_{\mathsf{mpc}}}
\newcommand{\inpVarMPC}{\inpVar_{\mathsf{mpc}}}
\newcommand{\outVarMPC}{\outVar_{\mathsf{mpc}}}
\newcommand{\switch}{\sigma}
\newcommand{\nrModes}{L}
\newcommand{\addSwitching}{\mathbb{S}}
\newcommand{\nrSwitches}{N}
\newcommand{\reduce}[1]{\tilde{#1}}
\newcommand{\Vred}{\reduce{V}}
\newcommand{\projVred}{\Pi_{\Vred}}
\newcommand{\Xred}{\reduce{\X}}
\newcommand{\BoundControlA}{\Delta_{A}}
\newcommand{\BoundControlB}{\Delta_{B}}
\newcommand{\BoundControlBtil}{\tilde\Delta_{B}}
\newcommand{\BoundControlAtil}{\tilde\Delta_{A}}
\newcommand{\err}{e}
\newcommand{\intermediateErr}{\check \err}
\newcommand{\intermediateErrMPC}{\check \err_{\mathsf{mpc}}}
\newcommand{\redErr}{\reduce \err}
\newcommand{\redErrMPC}{\reduce \err_{\mathsf{mpc}}}
\newcommand{\residual}{R}
\newcommand{\addErr}{\epsilon}
\newcommand{\adResidual}{Q}
\newcommand{\stateRed}{\reduce{\state}}
\newcommand{\stateDimRed}{r}
\newcommand{\adStateRed}{\reduce{\adState}}
\newcommand{\inpVarRed}{\reduce{\inpVar}}
\newcommand{\outVarRed}{\reduce{\outVar}}
\newcommand{\Jhat}{\calJ}
\newcommand{\basis}{\psi}
\newcommand{\PODbasis}{\bar{\basis}}
\newcommand{\PODeigVal}{\bar{\lambda}}
\newcommand{\optStateRed}{\stateRed^\star}
\newcommand{\optAdStateRed}{\adStateRed^\star}
\newcommand{\optInpVarRed}[1]{\reduce{\inpVar}_{#1}^\star}
\newcommand{\optInpVarRedN}{\optInpVarRed{n}}
\newcommand{\optOutVarRed}{\outVarRed^\star}
\newcommand{\intermediateState}{\check{\state}}
\newcommand{\intermediateStateMPC}
{\check{\state}_{\mathsf{mpc}}}
\newcommand{\intermediateOutVarMPC}
{\check{\outVar}_{\mathsf{mpc}}}
\newcommand{\intermediateInpVarMPC}
{\check{\inpVar}_{\mathsf{mpc}}}
\newcommand{\intermediateAdStateA}{{\hat \adState}}
\newcommand{\intermediateAdStateB}{\check{\adState}}
\newcommand{\intermediateOutVar}{\check{\outVar}}
\newcommand{\inpVarRedMPC}{\inpVarRed_{\mathsf{mpc}}}
\newcommand{\outVarRedMPC}{\outVarRed_{\mathsf{mpc}}}
\newcommand{\stateRedMPC}{{\stateRed}_{\mathsf{mpc}}}
\newcommand{\advection}{d}
\newcommand{\calA}{\mathcal{A}}
\newcommand{\calB}{\mathcal{B}}
\newcommand{\calC}{\mathcal{C}}
\newcommand{\calE}{\mathcal{E}}
\newcommand{\calI}{\mathcal{I}}
\newcommand{\calJ}{\mathcal{J}}
\newcommand{\calK}{\mathcal{K}}
\newcommand{\calL}{\mathcal{L}}
\newcommand{\calM}{\mathcal{M}}
\newcommand{\calO}{\mathcal{O}}
\newcommand{\calR}{\mathcal{R}}
\newcommand{\calS}{\mathcal{S}}
\newcommand{\calT}{\mathcal{T}}
\newcommand{\snapshots}{\mathsf s}
\newcommand{\T}{\ensuremath\mathsf{T}}
\newcommand{\nrSnapshots}{M}
\newcommand{\abbr}[1]{\textsf{#1}\xspace}
\newcommand{\DAE}{\abbr{DAE}}
\newcommand{\PDE}{\abbr{PDE}}
\newcommand{\PDEs}{\abbr{PDEs}}
\newcommand{\FOM}{\abbr{FOM}}
\newcommand{\ROM}{\abbr{ROM}}
\newcommand{\ROMs}{\abbr{ROMs}}
\newcommand{\MPC}{\abbr{MPC}}
\newcommand{\MOR}{\abbr{MOR}}
\newcommand{\LQR}{\abbr{LQR}}
\newcommand{\OCP}{\abbr{OCP}}
\newcommand{\OCPs}{\abbr{OCPs}}
\newcommand{\BT}{\abbr{BT}} 
\newcommand{\POD}{\abbr{POD}} 
\newcommand{\CIM}{\abbr{CIM}}
\newcommand{\LS}{\abbr{LS}}
\newcommand{\SVD}{\abbr{SVD}}
\newcommand{\cost}{C}
\newcommand{\costFOM}{\cost_{\FOM}}
\newcommand{\costFOMROM}{\cost_{\FOM-\ROM}}
\newcommand{\costROMROM}{\cost_{\ROM-\ROM}}
\newcommand{\costUpdate}{\cost_{\textsf{update}}}
\newcommand{\costEst}{\cost_{\textsf{estimate}}}
\newcommand{\costPOD}{\cost_{\textsf{POD}}}
\title[Certified MPC for switched evolution equations using MOR]{Certified model predictive control for switched evolution equations using Model Order Reduction}
\author[M.~Kartmann \and M.~Manucci \and B.~Unger \and S. Volkwein]{Michael Kartmann${}^{\dagger}$ \and Mattia Manucci${}^\star$ \and Benjamin Unger${}^\ddagger$ \and Stefan Volkwein${}^\dagger$}
\address{${}^{\dagger}$ FB Mathematik und Statistik, Universität Konstanz, Universitätsstr. 10, 78464 Konstanz, Germany}
\email{\{michael.kartmann,stefan.volkwein\}@uni-konstanz.de}
\address{${}^{\star}$ Stuttgart Center for Simulation Science (SC SimTech), University of Stuttgart, Universit\"{a}tsstr.~32, 70569 Stuttgart, Germany}
\email{mattia.manucci@simtech.uni-stuttgart.de}
\address{${}^{\ddagger}$ Institute for Applied and Numerical Mathematics, Karlsruhe Institute of Technology, Englerstr.~2, 76131 Karlsruhe, Germany}
\email{benjamin.unger@kit.edu}
\begin{document}

\begin{abstract}
	We present a model predictive control (\MPC) framework for linear switched evolution equations arising from a parabolic partial differential equation (\PDE). First-order optimality conditions for the resulting finite-horizon optimal control problems are derived. The analysis allows for the incorporation of convex control constraints and sparse regularization. 
    Then, to mitigate the computational burden of the \MPC procedure, we employ Galerkin reduced-order modeling (\ROM) techniques to obtain a low-dimensional surrogate for the state-adjoint systems. We derive recursive a-posteriori estimates for the \ROM feedback law and the \ROM-\MPC closed-loop state and show that the \ROM-\MPC trajectory evolves within a neighborhood of the true \MPC trajectory, whose size can be explicitly computed and is controlled by the quality of the \ROM. Such estimates are then used to formulate two \ROM-\MPC algorithms with closed-loop certification.
\end{abstract}

\maketitle
{\footnotesize \textsc{Keywords:} model predictive control, linear switched systems, model order reduction, proper orthogonal decomposition, optimal control}

{\footnotesize \textsc{AMS subject classification:} 49K20, 49M05, 65G20, 93B45, 93C20}


\section{Introduction}
\label{sec:intro}
We introduce a framework for \emph{model predictive control} (\MPC) for linear switched parabolic \emph{partial differential equations} ({\PDE}s) using Galerkin \emph{model order reduction} (\MOR) techniques. In more detail, we focus on the first-order optimality conditions for the resulting finite-horizon \emph{optimal control problems} (\OCPs) with a proper, lower-semicontinuous, convex regularization function, which can then be used for gradient-based optimization techniques. Since gradients need to be computed many times in the receding horizon problems, we employ \MOR to obtain a low-dimensional surrogate for the optimality systems to mitigate the computational burden conspiring against real-time applicability. Our main goal is to certify the approximation error of the \ROM-\MPC feedback law and closed-loop trajectory. 
\subsection{Main results}
After discussing the well-posedness of the switched system in \Cref{prop:wellPosedness} and the associated finite-time \OCP in \Cref{thm:wellPosednessOCP}, we derive the first-order necessary optimality conditions in \Cref{thm:FONC:MPC}. Interestingly, switching in the bilinear form corresponding to the time derivative directly yields a switched adjoint system with state transition maps. The analysis allows for general convex, lower semicontinuous regularizations, which include control box constraints and $L^1$ regularizations as a special case. Toward a computable approximation, we introduce a Galerkin-reduced switched system and the corresponding \OCP in \Cref{sec:approximation}. To verify the approximation, we present error bounds for the optimal control under perturbed initial values, accounting for the error accumulated from previous \MPC iterations, the state, and the adjoint state in \Cref{thm:apostErrorControl}, \Cref{thm:errorBoundStateEnergy,thm:errorBoundAdjointEnergy}, and \Cref{cor:errBoundOptimalControl} respectively, extending the ideas presented in, e.g., \cite{DBLP:journals/coap/TroltzschV09,bader2016certified,AliH18}. In \Cref{subsec:applicationToMPC}, we apply these results to \MPC and formulate two reduced online-adaptive \MPC algorithms (\Cref{alg:ROM-MPC,alg:ROMROM-MPC}) with recursive a-posteriori certification for the feedback law and the closed-loop state. To demonstrate our framework, we use the \emph{proper orthogonal decomposition}~(\POD) to obtain a \emph{reduced-order model} (\ROM) in \Cref{subsec:POD}. Throughout the manuscript, we illustrate our theoretical findings with a sample application of two adjacent rooms, where the switching is due to an open or closed door connecting the rooms; see \Cref{subsection:example,subsec:exampleContinued,subsec:numerics:openloop,subsec:numerics:MPC}. 
\subsection{Literature-review and state-of-the-art}
The research subject at hand combines different research areas, namely optimal control and \MPC for switched systems, \MPC and \MOR, and \MOR for switched systems. To the best of our knowledge, our paper is the first contribution combining all three areas while emphasizing the infinite-dimensional setting. We thus provide a literature review for the individual topics.

\subsubsection{Optimal control and \texorpdfstring{\MPC}{MPC} for switched systems}
The literature on \OCPs for switched systems is extensive; for this reason, we mention only some references closer to our framework and refer the interested reader to the references therein. The work \cite{RieKUZ99} uses a maximum principle to derive the necessary conditions for a general class of hybrid control systems using a Hamiltonian function. In
\cite{GiuSV01} authors consider an autonomous system, where the switching sequence is given a priori, and the goal is to find the optimal switching times. The possibility of modifying the switching signal to achieve some optimality is certainly an interesting topic, but it is not our focus in this work. Similarly, in \cite{XuA04}, a predefined sequence of the active modes is given, and then the authors study the problem of determining optimal switching times and an optimal control input. Noteworthy, the authors decompose the problem into a two-stage procedure, where, in the first stage, the optimal control problem is solved for given switching times, and then, in the second step, the optimal switching times are determined. We envision that such an extension is also possible for our framework. In \cite{BenD05}, a nonlinear switched system is considered. The authors construct sub-optimal controls by embedding the switched system in an extended non-switched system, where the switching signal is also used for the controller design. It is shown that the trajectories of the switched system are dense in the extended non-switched system. Finally, \cite{WijT23} (see also the thesis \cite{Wij22}) studies the finite-horizon linear quadratic regulator (\LQR) problem for a switched linear differential-algebraic equation (\DAE) with a given switching signal. The resulting optimal control is a feedback controller that can be computed as the solution of a Riccati differential equation. We highlight that all the works mentioned treat the control problem only from a finite-dimensional perspective. For the infinite-dimensional setting using a semigroup approach, we refer to \cite{Han18} and the references therein. Again, the main focus in the context of optimal control is the derivation of optimal switching signals, and the optimal control computation is not considered. We exemplarily mention here \cite{RueH16,RueH17,RueMH18}, where optimal switching is studied for an efficient simulation of gas flow in a pipe network using a model hierarchy. For future work, we want to combine these ideas with the two-stage approach of \cite{XuA04} and our certified \MOR-\MPC framework presented in this paper.

\MPC is an optimization-based control technique with a feedback mechanism for linear and nonlinear systems \cite{grune2017nonlinear}, where a sequence of finite-horizon control problems substitutes the infinite-horizon optimal control problem over receding time intervals. We refer to \emph{stabilizing} \MPC whenever the control objective is to drive the system (possibly unstable) to a predefined desired trajectory by penalizing deviations from the reference using a stage cost function, which is positive definite w.r.t. the reference. In contrast, in \emph{economic} \MPC, one is interested in operating a system at a trajectory with minimal cost. Here, the cost function can model all kinds of quantities and does not need to be positive definite w.r.t to a given target. The stability of the closed-loop trajectory can be ensured by introducing terminal costs and terminal constraints, or by choosing a sufficiently large prediction horizon \cite{grune2017nonlinear,mayne2000constrained,azmi2019hybrid}. In the case of \MPC specifically focusing on switched systems, we mention \cite{10050718,qi2021model,10598629,ZhangB13} and references therein.
\subsubsection{\MPC and \MOR}
Since \MPC represents a multi-query context for finite-horizon optimization problems, employing \ROM techniques to accelerate the process is natural. We focus on Galerkin-\ROMs, i.e., \ROMs obtained by projecting the dynamical system onto low-dimensional linear subspaces. Naturally, this leads to an interest in deriving conditions under which the \ROM-\MPC feedback law and the \ROM-\MPC closed-loop state exhibit correct behavior despite approximation errors.
The work \cite{loehning2014model} treats continuous-time, general stabilizable linear systems coupled with general projection-based \MOR with control and state constraints. Stabilization is achieved by exploiting the feedback of the \MOR error. This feedback, combined with tightened terminal constraints and terminal cost functions, ensures the stability of the \MPC closed-loop.
\cite{DieG23} provides a stabilizing \MPC framework using the reduced basis method for parametrized linear-quadratic control problems with control constraints. The authors propose an algorithm that adaptively searches for the minimal stabilizing prediction horizon and estimates the suboptimality degree of the \ROM-\MPC feedback.
The \ROM feedback stabilizes the original problem, where stability is guaranteed by extending the prediction horizon in the \MPC problem. The theory is based on the \MPC book \cite{grune2017nonlinear} and employs a reduced suboptimality degree for stabilizing \MPC in a discrete-time framework. The reduced suboptimality degree is based on a posteriori error bounds of the value function, optimal control, and optimal state of the finite-horizon linear quadratic subproblem. We emphasize that the bounds for the state and the adjoint we derive here are similar to those in \cite{DieG23} but adapted to our switched setting. In contrast, we are not interested in parametrized systems, but the control is the only parameter. We extend the estimates for linear-quadratic \OCPs derived in \cite{DBLP:journals/coap/TroltzschV09,AliH18} to the case of general convex, lower-semicontinuous regularization functions and perturbed initial values. This enables us to formulate estimates for the closed-loop state and the \ROM feedback law, which account for the error from previous \MPC iterations and are applicable in stabilizing and economic \MPC settings. Another unique aspect of our optimal control error estimators is that they are expressed in terms of output bounds and can be linked to output bounds derived from \emph{balanced truncation} (\BT) model reduction.
In \cite{LorMFP22}, discrete-time linear systems with quadratic cost functional and control-state constraints are treated. The authors provide theoretical performance guarantees, computational efficiency, and output feedback for general projection-based \ROMs. The work \cite{grune2021performance} concerns economic \MPC using \POD. A performance index for economic \MPC is developed and used to evaluate the performance of the \MPC controller online. However, the performance index is directly applied to the \ROM system and does not quantify the error with respect to the \FOM system. Economic \MPC with state constraints on a time-varying parabolic equation is studied in \cite{AndGMMPV22}; the \POD model is updated dynamically using both a-posteriori and a-priori error estimates, but there is no error estimate for the closed-loop state given. The work \cite{AllV15} introduces a stabilizing \MPC framework in continuous time for semilinear parabolic \PDEs without terminal constraints. A sufficient condition for stability of the \POD feedback is given based on a priori analysis, and it is proven that the \ROM feedback stabilizes the \ROM but not the \FOM.
\subsubsection{\MOR for switched systems}
Several works have appeared in recent years addressing the topic of \MOR for switched systems, such that we restrict our literature review to some selected references. In \cite{Wu2009}, a method that uses a set of coupled linear matrix inequalities is proposed, which becomes infeasible in a large-scale context. Nevertheless, the matrix inequalities are important to guarantee quadratic stability of the reduced system and derive an error bound \cite{Petreczky2013}. The works \cite{GosPAF18,Shaker2012} solve a set of coupled Lyapunov equations to compute Gramians, which can then be used for a balancing-based model reduction. Unfortunately, there is no guarantee that the set of coupled Lyapunov equations is solvable \cite{Liberzon2003}, and is not suitable for large-scale settings. In \cite{SchU18}, the switched system is recast as a linear system such that standard methods can be applied, while in \cite{PontesDuff2020}, the system is recast as a bilinear system, and a balanced truncation approach for such systems is employed. See also \cite{ManU24} for large-scale computation using the bilinear formulation. We emphasize that both methods provide an a priori error certification for the approximated input-output map and do not require a priori knowledge of the switching signal. This is crucial to develop a certified \MOR-\MPC framework. In \cite{Bastug2016}, interpolation-based techniques to construct a \ROM for a switched system are discussed. The authors of  \cite{Peitz2019} discretize the control variable to obtain a switched system of autonomous \PDEs that they approximate with \ROMs in an \MPC framework. Finally, we refer to \cite{Hossain2024} for \MOR of switched systems with state jumps and to \cite{Hossain2023}, where a similar technique is applied in the context of switched {\DAE}s. The drawbacks of this approach are two: the switching sequence needs to be known a priori, and the method is not presented to deal with large-scale systems. Recent works \cite{ManU24,ManU24b} overcome these issues, presenting a \MOR framework for large-scale systems that do not require a priori knowledge of the switching sequence.
\subsection{Structure of the paper}
We present the \MPC problem in \Cref{sec: problem formulation} and discuss the well-posedness of the forward problem and the optimal control problem for which we also derive the first-order optimality conditions. In \Cref{sec:approximation}, we show how to obtain suitable error estimates for the control, state, and adjoint trajectories that are then employed to guarantee a desired behavior of the reduced \MPC closed-loop procedure. To demonstrate our theoretical findings, we apply our methodology in \Cref{sec:numerics} to a switched system arising from two adjacent rooms where heat propagation is controlled using a switching law. Finally, we state our conclusions in \Cref{sec:conclusion}.
\subsection{Notation}
The symbol $\calI_H$ denotes the identity operator acting on a Hilbert space $H$. Further, $\Li(H,V)$ denotes the set of all linear and bounded operators mapping between $H$ and a Hilbert space $V$. By $L^2(\timeInt,V)$ we denote the Hilbert space of measurable and square-integrable functions mapping from some (time) interval $\timeInt\subseteq \R$ onto $V$ endowed with the canonical inner product. Whenever it is clear from the context, we write $L^2$ instead of $L^2(\timeInt,V)$. For given $\cOper\in \Li(H,V)$, the dual operator $\cOper'\in \Li(V',H')$ is defined by $ \langle \cOper'\phi,\varphi\rangle_{H',H}=\langle \phi,\cOper\varphi\rangle_{V',V}$ for all $\phi\in V', \varphi\in H$. Here, $\langle \cdot,\cdot\rangle_{H',H}$ denotes the duality pairing in $H$ and $H'$ denotes the topological dual space of $H$. 
We denote the left and right-limits of a function $\switch$ mapping from $\timeInt$ as $\switch(t^+)= \lim_{s\searrow t} \switch(s)$ and $\switch(t^-) = \lim_{s\nearrow t} \switch(s)$, if they exist.
\section{Problem formulation and well-posedness}
\label{sec: problem formulation}
In \Cref{subsec:MPC} we introduce our control problem and the basic \MPC idea. Then, in \Cref{subsec:switchedSystem,subsec:optimalControl}, we provide the precise regularity assumptions and study the well-posedness of the switched system and the \MPC subproblem.
\subsection{\MPC problem}
\label{subsec:MPC}
Let $V$ and $H$ be real separable Hilbert spaces, and let the embedding of $V$ in $H$ be dense and compact. Identifying $H$ with its dual space, we obtain a Gelfand triple $(V,H,V')$ with $V\hookrightarrow H \hookrightarrow V'$, where each embedding is continuous and dense; cf.~\cite[Sec.~23.4]{Zei90}. For given sampling time $\delta>0$ and prediction horizon $T>\delta$, we define the receding prediction interval $\timeInt_n \vcentcolon= (t_n,T_n)$ with $t_n \vcentcolon= n\delta$, $T_n \vcentcolon= t_n + T$, and $n\in\N_0$, and study the finite-time \OCP with the cost function 
\begin{align}\label{eqn:cost:fun}
\begin{aligned}
    J_n(\outVar,\inpVar) &\vcentcolon=  \int_{\timeInt_n} \ell(t,\outVar(t), \inpVar(t)) \dt + \tfrac{\mu}{2} \|\outVar(T_n)-\outVar_{T_n}\|^2_{\R^{\outVarDim}} + g_n(\inpVar)\\
    \ell(t,{\outVar(t)},{\inpVar(t)}) &\vcentcolon= \tfrac{1}{2}\|{\outVar(t)}-\outVarDes(t)\|^2_{\R^{\outVarDim}}+ \tfrac{\lambda}{2} \|{\inpVar(t)}-\inpVarDes(t)\|^2_{\R^{\inpVarDim}}
    \end{aligned}
\end{align}
subject to finding $(\outVar,\state,\inpVar)\colon \timeInt_n \to \R^{\outVarDim}\times V \times \R^{\inpVarDim}$ that solves the switched control system
\begin{equation}
    \label{eqn:weakForm} 
    \left\{\quad
    \begin{aligned}
        m_{\switch}(\ddt \state,\varphi) + a_{\switch}(\state,\varphi) &= b_{\switch}(\inpVar,\varphi) && \text{for all $\varphi\in V$, a.e. (almost everywhere) in }\timeInt_n,\\
       \state(t_n) &= \state_{n} && \text{in } H,\\
       \outVar &= \cOper_{\switch}\state && \text{a.e. in }\timeInt_n, 
    \end{aligned}\right.
\end{equation}
with external switching signal $\switch\colon \timeInt_n \to \{1,\ldots,\nrModes\}$, bilinear forms\footnote{We refer to \Cref{rem:moperdef} for the precise meaning of the time derivative within the bilinear form $m_i$.}
\begin{align*}
    m_i&\colon H\times H\to \R, &
    a_i&\colon V\times V\to \R, &
    b_i&\colon \R^{\inpVarDim}\times H\to\R, 
\end{align*}
linear operators $\cOper_i\colon H\to \R^{\outVarDim}$ for $i=1,\ldots,\nrModes$,
time-dependent target functions $\inpVarDes$, $\outVarDes$, terminal data $y_{T_n}$, initial value $\state_n\in H$, regularization parameters $\mu,\lambda>0$, and functional $g_n$ to model sparse regularization and control constraints (see \Cref{subsec:exampleContinued} later on). The numbers $\inpVarDim,\outVarDim,\nrModes\in\N$, denote the number of control inputs, output quantities, and subsystems, respectively. The precise assumptions and regularities for all the quantities involved are given in the next subsection (see \Cref{ass:switchedSystem}).
In summary, we obtain the finite-time \OCP
\begin{equation}
	\label{eqn:MPCsubproblem}
    \min J_n(\outVar,\inpVar)\quad\text{subject to (s.t.)}\quad(\outVar,\state,\inpVar) \mbox{ solves } \eqref{eqn:weakForm}.
\end{equation}
The idea of \MPC is to solve~\eqref{eqn:MPCsubproblem}, apply the resulting optimal control $\inpVar_n^\star$ to the system in the time interval $\timeInt_n^\delta \vcentcolon= (t_n,t_n+\delta]$, i.e., run the switched system~\eqref{eqn:weakForm} with the control input $\inpVar_n^\star$ to obtain $\state(t_n+\delta)$, and then repeat the process on the interval $\timeInt_{n+1}$ with a new initial value $\state_{n+1} \vcentcolon= \state(t_n+\delta)$. In this way, we iteratively construct the \MPC feedback control $\inpVarMPC$ and the controlled \MPC state~$\stateMPC$. We call the resulting method \FOM-\MPC and provide a pseudocode in \Cref{alg:FOM-MPC}.

\begin{algorithm}[t]
	\caption{(\FOM-\MPC)}\label{alg:FOM-MPC}
	\begin{algorithmic}[1]
		\Require Prediction horizon $T>0$, sampling time $0<\delta<T$, initial condition $\state_0$.
        \State Set $\stateMPC(0) = \state_0$.
		\For{$n=0,1,2,...$}
	        \State Set $t_n = n\delta$ and $\state_n \vcentcolon= \stateMPC(t_n)$.
		    \State Solve the \MPC subproblem \eqref{eqn:MPCsubproblem} for the \MPC feedback control $\optInpVarN$ and the \MPC state $\optState$. 
		    \State Apply the control $\optInpVarN$ to the equation~\eqref{eqn:weakForm} on $\timeInt_n^\delta \vcentcolon= (t_n,t_n+\delta]$, i.e., set
            \begin{align*}
            \inpVarMPC(t)\vcentcolon=\optInpVarN{(t)}, \quad\stateMPC (t)\vcentcolon=\optState(t), \quad\outVarMPC (t)\vcentcolon=\cOper_{\switch{(t)}} \optState{(t)}\quad\text{for }t\in \timeInt_n^\delta.
            \end{align*}
		\EndFor 
	\end{algorithmic}
    \begin{flushleft}
        \textbf{Output:} $\outVarMPC,\ \stateMPC$, $\inpVarMPC$.
    \end{flushleft}
\end{algorithm}
Whenever convenient, we will write the switched system~\eqref{eqn:weakForm} in operator form in the dual space of $V$. For this, let $\mOper_i,\aOper_i$, and $\bOper_i$ denote the operators corresponding to the bilinear forms $m_i,a_i$, and $b_i$, respectively. Then, the switched system~\eqref{eqn:weakForm} is equivalent to the switched abstract system
\begin{equation}
    \label{eq:OPLSS}
    \left\{\quad
    \begin{aligned}
        \mOper_{\switch}\ddt\state + \aOper_{\switch}\state &= \bOper_{\switch}\inpVar && \text{ in }V', \text{ a.e. in } \timeInt_n,\\
       \state(t_n) &= \state_{n} && \text{in } H,\\
       \outVar &= \cOper_{\switch}\state &&\text{a.e. in }\timeInt_n.
    \end{aligned}\right.
\end{equation}
\subsection{Well-posedness of the switched system}
\label{subsec:switchedSystem}
To establish the existence and uniqueness of a solution to the optimal control problem, we first discuss the well-posedness of the switched system~\eqref{eqn:weakForm}. We make the following assumptions.

\begin{assumption}
	\label{ass:switchedSystem}
	Consider the switched system~\eqref{eqn:weakForm}.
	\begin{enumerate}
		\item\label{ass:switchedSystem:switching} The external switching signal $\switch$ belongs to the set
			\begin{align*}
				\addSwitching \vcentcolon= \left\{\switch\colon (0,\infty)\to \{1,\ldots,\nrModes\} \,\left|\,\begin{aligned}&\switch \text{ is piecewise constant with}\\
				 &\text{locally finite number of jumps}\end{aligned}\right.\right\}.
			\end{align*}
            For a given switching signal $\switch\in\addSwitching$, we define
   \begin{equation}\label{eqn:swi:tim}
				\calT_n \vcentcolon= \left\{t\in \timeInt_n \,\left|\,\switch \text{ is discontinuous in t} \right.\right\}.
    \end{equation}
    For simplicity, we assume that $\{t_n,T_n\}\notin \calT_n$ and that there are $|\calT_n| = \nrSwitches-1\in \N$ switches. Note that $N$ depends on $n$, but we suppress this subscript for readability. Finally, we denote the switching times by $t_{n,1}<\ldots<t_{n,i}<\ldots<t_{n,\nrSwitches-1}$ and set $t_{n,0}\coloneqq t_n$ and $t_{n,N}\coloneqq T_n$.
 		\item\label{ass:switchedSystem:A} The bilinear forms $a_i\colon V\times V$ are uniformly continuous and satisfy a uniform G\r{a}rding's inequality, i.e., there exist constants $\coerAV,\contA>0$ and $\coerAH\geq 0$ such that
        	\begin{equation}
        		\label{eqn:properties:a}
        		a_i(\phi,\phi)\geq \coerAV\|\phi\|_V^2 - \coerAH\|\phi\|_H^2,\qquad
        		a_i(\phi,\varphi) \leq \contA \|\phi\|_V\|\varphi\|_V,
       		 \end{equation}
       		 for all $\phi,\varphi\in V$ and all $i=1,\ldots,\nrModes$.
		\item\label{ass:switchedSystem:M} The bilinear forms $m_i\colon H\times H\to \R$ are symmetric and define inner products in the pivot space $H$. In particular, there exist constants $\coerM,\contM>0$ such that 
			\begin{equation}\nonumber
				m_i(\varphi,\varphi)\geq \coerM\,{\|\varphi\|}_H^2,\qquad
				m_i(\phi,\varphi) \leq \contM\,{\|\phi\|}_H {\|\varphi\|}_H
			\end{equation}			        
			for all  $\phi,\varphi\in H$ and all $i=1,\ldots,\nrModes$.
		\item\label{ass:switchedSystem:B} The bilinear forms $b_i\colon \R^{\inpVarDim}\times V$ are uniformly continuous, i.e., there exists a constant $\contB>0$ such that
          	\begin{equation}\nonumber
          		b_i(\bar{\inpVar},\varphi) \leq \contB\, {\|\bar{\inpVar}\|}_{\R^{\inpVarDim}}\|\varphi\|_{V}
          	\end{equation}
          	for all $\bar{\inpVar}\in\R^{\inpVarDim}$, $\varphi\in V$ and all $i=1,\ldots,\nrModes$.
		\item\label{ass:switchedSystem:C}The operators $\cOper_i\in\Li(H,\R^{\outVarDim})$ are uniformly bounded, i.e., there exists a constant $\contC>0$ such that
			\begin{equation}\nonumber
          		\|\cOper_i\phi\|_{\R^{\outVarDim}} \leq \contC \|\phi\|_H
          	\end{equation}
          	for all $\phi\in H$ and all $i=1,\ldots,\nrModes$.
        \item The data satisfy $\inpVarDes\in L^\infty((0,\infty),\R^{\inpVarDim})$, $\outVarDes\in L^\infty((0,\infty),\R^{\outVarDim})$, $y_{T_n}\in\R^{\outVarDim}$ and $\theta_n\in H$.
 	\end{enumerate}
\end{assumption}
\begin{remark}
    \label{rem:moperdef}
    As common in the literature, we identify the operator $\mOper_i\colon H\to H$ with an operator $\widehat{\mOper}_i\colon V'\to V'$ by defining 
    \begin{align}
        \langle \widehat{\mOper}_i \ddt \state,\varphi\rangle_{V',V} \vcentcolon= \langle \ddt(\mOper_i \state),\varphi\rangle_{V',V} =\vcentcolon m_i(\ddt \state,\varphi)
    \end{align}
    for all $\varphi\in H$ and $\state\in L^2(\timeInt_n,H)\cap H^1(\timeInt_n,V')$; see for instance {\cite[Thm.~7]{Muj22}}.
\end{remark}
For the forthcoming analysis, we introduce the switched system with state transitions given by
\begin{subequations}
    \label{eqn:switchedPDE} 
    \begin{empheq}[left=\left\{\quad,right=\right.]{align}
        \label{eqn:switchedPDE:pde} m_{\switch}(\ddt \state,\varphi) + a_{\switch}(\state,\varphi) &= \langle f, \varphi\rangle_{V',V} && \text{for all $\varphi\in V$, a.e.~in $\timeInt_n$},\\
        \label{eqn:switchedPDE:initialCond} \state(t_n) &= \state_{n} && \text{in $H$},\\
        \label{eqn:switchedPDE:stateTransition} \state(t^+) &= \kOper_{\switch(t^-),\switch(t^+)}\state(t^-) && \text{in $H$, for all $t\in\timeInt_n$,}
    \end{empheq}
\end{subequations}
with state transition operators $\kOper_{i,j}\in \Li(H,H)$ for $i,j\in\{1,\ldots,\nrModes\}$. Note that at the switching points, i.e., at the points $t\in\calT_n$, the state transition maps~\eqref{eqn:switchedPDE:stateTransition} give rise to a new initial value for the next time interval, where the switching signal is constant. The original system~\eqref{eqn:weakForm} is recovered by setting $f \vcentcolon= \bOper_{\switch}\inpVar$ and $\kOper_{i,j} = \calI_{H}$ (the identity map in $H$) for $i,j=1,\ldots,\nrModes$. In general, we only allow for a state transition at discontinuities of the switching signal. Hence, we make the following additional assumption.

\begin{assumption}
    \label{ass:transitionOperators}
    The state transition operators $\kOper_{i,j}$ satisfy $\kOper_{i,i} = \calI_{H}$ for $i=1,\ldots,\nrModes$.
\end{assumption}

Similarly to \cite{GosPAF18} and \cite{wloka1987partial}, we introduce for $\switch\in\addSwitching$ the Hilbert spaces
\begin{align*}
    \X_{n,\switch,i} &\vcentcolon= \big\{v \in L^2((t_{n,i},t_{n,i+1}),V) \mid \dot{v} \in L^2((t_{n,i},t_{n,i+1}),V')\big\}\hookrightarrow C([t_{n,i},t_{n,i+1}],H),\\
     \X_n &\vcentcolon= \big\{v \in L^2(\timeInt_n,V) \mid \dot{v} \in L^2(\timeInt_n,V')\big\}\hookrightarrow C(\closure{\timeInt}_n,H),
\end{align*}
and the space
\begin{equation}\label{eqn:swi:sol:space}
    \X_{n,\switch} \vcentcolon= \left\{v\colon \timeInt_n\to H\,\left|\, v|_{(t_{n,i},t_{n,i+1})} \in \X_{n,\switch,i}\right.\right\}\subseteq L^2(\timeInt_n,V),
\end{equation}
where $v|_{(\alpha,\beta)}$ denotes the restriction of $v$ to the time interval $(\alpha,\beta)$.
Similar to the mild solution for switched \PDEs using semigroup theory \cite[Lem.~2]{RueH16}, we obtain the well-posedness of the switched system in the weak setting.

\begin{proposition}[Well-posedness of the switched system]
	\label{prop:wellPosedness}
	Let the switched system~\eqref{eqn:switchedPDE} with external switching signal $\switch\in\addSwitching$ satisfy \Cref{ass:switchedSystem,ass:transitionOperators}. Then, for every $f\in L^2(\timeInt_n,V')$ there exists a unique solution $\state\in\X_{n,\switch}$. If in addition $\kOper_{i,j} = \calI_{H}$ for all $i,j=1,\ldots,\nrModes$, then there is a unique $\state\in\X_n$ satisfying~\eqref{eqn:switchedPDE}. 
\end{proposition}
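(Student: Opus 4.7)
The plan is to reduce the proposition to the well-posedness of a standard (non-switched) linear parabolic equation on each constant-mode subinterval and then stitch the local solutions together using the state transition operators $\kOper_{i,j}$ and the embedding $\X_{n,\switch,i}\hookrightarrow C([t_{n,i},t_{n,i+1}],H)$.

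First, since $\switch\in\addSwitching$ has only finitely many jumps, the interval $\timeInt_n$ decomposes into $\nrSwitches$ subintervals $(t_{n,i},t_{n,i+1})$ on each of which $\switch$ equals a fixed mode $k_i\in\{1,\ldots,\nrModes\}$. On such a subinterval, \eqref{eqn:switchedPDE:pde} reduces to
\begin{equation*}
	m_{k_i}(\ddt \state,\varphi) + a_{k_i}(\state,\varphi) = \langle f,\varphi\rangle_{V',V}\quad\text{for all } \varphi\in V,
\end{equation*}
with an initial condition in $H$. By \Cref{ass:switchedSystem}, $a_{k_i}$ is bounded and satisfies G\r{a}rding's inequality, while $m_{k_i}$ defines an inner product on $H$ that is equivalent to the canonical one. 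Hence classical parabolic theory (e.g., \cite[Sec.~23.7]{Zei90} after the standard exponential shift $\tilde\state = e^{-\coerAH t/\coerM}\state$ to restore coercivity) yields a unique solution in $\X_{n,\switch,i}$ depending continuously on $f$ and on the initial datum.

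Second, I would induct on $i=0,\ldots,\nrSwitches-1$. For $i=0$, use initial value $\state_n\in H$. Since $\state|_{(t_{n,i},t_{n,i+1})}\in\X_{n,\switch,i}\hookrightarrow C([t_{n,i},t_{n,i+1}],H)$, the one-sided limit $\state(t_{n,i+1}^-)$ is well defined in $H$, and applying $\kOper_{k_i,k_{i+1}}\in\Li(H,H)$ provides an admissible initial value for the next subinterval. Concatenating the $\nrSwitches$ pieces yields a function in $\X_{n,\switch}$ solving~\eqref{eqn:switchedPDE}; uniqueness on each piece, together with the deterministic nature of the transition maps, yields global uniqueness.

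Third, for the additional claim with $\kOper_{i,j}=\calI_H$, \Cref{ass:transitionOperators} together with \eqref{eqn:switchedPDE:stateTransition} reduces the transition to $\state(t^+)=\state(t^-)$ for every $t\in\calT_n$, so the concatenation lies in $C(\closure{\timeInt}_n,H)$ and is a.e. differentiable in $V'$ with the piecewise derivative in $L^2(\timeInt_n,V')$. The key step is to verify that this piecewise derivative is the weak derivative on the whole of $\timeInt_n$: for any $\varphi\in V$ and $\zeta\in C^\infty_c(\timeInt_n)$, I would split $\int_{\timeInt_n}\sprod{\state(t)}{\varphi}\dot\zeta(t)\dt$ into contributions from each subinterval, integrate by parts on each, and observe that the boundary terms at the switching times telescope and cancel exactly because $\state$ is continuous in $H$ across each jump of $\switch$. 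This shows $\state\in\X_n$.

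The main obstacle is not conceptual but bookkeeping: correctly invoking the classical existence theorem under G\r{a}rding's inequality (which forces either the exponential substitution or an additional Gronwall step in the energy estimate) and, in the identity-transition case, rigorously identifying the piecewise derivative as the distributional derivative on the whole interval. Once these technicalities are handled, uniqueness and continuous dependence transfer from each subinterval to the whole horizon through the bounded linear transition operators.
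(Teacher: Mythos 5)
Your proposal is correct and follows essentially the same route as the paper's proof: an iterative construction over the constant-mode subintervals, invoking classical parabolic well-posedness (the paper cites \cite[Cha.~3, Thm.~4.1]{LioM72}) on each piece and passing the terminal value through the bounded transition operators $\kOper_{i,j}$ to initialize the next piece. Your additional care with the G\r{a}rding shift and with identifying the piecewise derivative as the weak derivative on all of $\timeInt_n$ in the case $\kOper_{i,j}=\calI_H$ only makes explicit what the paper leaves implicit.
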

\begin{proof}
    Note that $\kOper_{\switch(t^-),\switch(t^+)} = \calI_{H}$ for every $t\in(t_{n,i},t_{n,i+1})$, since $\switch$ is constant in $(t_{n,i},t_{n,i+1})$ for $i=0,\ldots,\nrSwitches-1$. Hence, we can construct a solution iteratively by solving the parabolic equation
    \begin{subequations}\nonumber
        \begin{empheq}[left=\left\{\quad,right=\right.]{align}\nonumber
         m_{\switch}(\ddt \state^i,\varphi) + a_{\switch}(\state^i,\varphi) &= \langle f, \varphi\rangle_{V',V} && \text{for all $\varphi\in V$, a.e.~in $(t_{n,i},t_{n,i+1})$},\\
           \nonumber \state^i(t_{n,i}) &= \state_{n,i} && \text{in $H$},
        \end{empheq}
    \end{subequations}
    with $\state_{n,0} \vcentcolon= \state_n$, and $\state_{n,i} \vcentcolon= \kOper_{\switch(t_{n,i}^-),\switch(t^+_{n,i})} \state^{i-1}(t_{n,i}^-)$. We immediately infer a unique solution $\state^i\in\X_{n,\switch,i}$; see for instance \cite[Cha.~3, Thm.~4.1]{LioM72}.
\end{proof}

\begin{corollary}
    \label{cor:wellPosednessForward}
    Assume that the switched system~\eqref{eqn:weakForm} satisfies \Cref{ass:switchedSystem} and define the control space $\U_n \vcentcolon= L^2(\timeInt_n,\R^{\inpVarDim})$.
    Then for every $\inpVar\in \U_n$ there exists a unique $\state\in\X_n$ satisfying~\eqref{eqn:weakForm}.
\end{corollary}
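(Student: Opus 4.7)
The plan is to reduce the corollary directly to \Cref{prop:wellPosedness} by recognizing that \eqref{eqn:weakForm} is an instance of \eqref{eqn:switchedPDE} in the special case where every state transition operator equals the identity and the right-hand side is $f \vcentcolon= \bOper_{\switch}\inpVar$. Since $\kOper_{i,j} = \calI_H$ for all $i,j$ trivially satisfies \Cref{ass:transitionOperators}, invoking the second statement of \Cref{prop:wellPosedness} will produce a unique solution in $\X_n$ (and not merely in $\X_{n,\switch}$), which is precisely the claim.

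The only nontrivial step is to verify that $f\in L^2(\timeInt_n,V')$. I would argue this as follows. Assumption~\ref{ass:switchedSystem}\eqref{ass:switchedSystem:B} states that $b_i(\bar{\inpVar},\varphi)\leq \contB\,\|\bar{\inpVar}\|_{\R^{\inpVarDim}}\|\varphi\|_V$ uniformly in $i$, which is equivalent to the operator bound $\|\bOper_i\bar{\inpVar}\|_{V'} \leq \contB\,\|\bar{\inpVar}\|_{\R^{\inpVarDim}}$ for all $i=1,\ldots,\nrModes$. Because $\switch\in\addSwitching$ is piecewise constant with only finitely many jumps in $\timeInt_n$, the map $t\mapsto \bOper_{\switch(t)}\inpVar(t)$ is strongly measurable with values in $V'$, and
\begin{equation*}
    \int_{\timeInt_n}\|\bOper_{\switch(t)}\inpVar(t)\|_{V'}^2\dt \,\leq\, \contB^2 \int_{\timeInt_n}\|\inpVar(t)\|_{\R^{\inpVarDim}}^2\dt \,=\, \contB^2\,\|\inpVar\|_{\U_n}^2 \,<\,\infty,
\end{equation*}
so that $f\in L^2(\timeInt_n,V')$ as required.

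With $f$ satisfying the hypothesis of \Cref{prop:wellPosedness} and $\kOper_{i,j}=\calI_H$, the proposition yields a unique $\state\in\X_n$ solving~\eqref{eqn:switchedPDE}, which by construction coincides with the state equation in~\eqref{eqn:weakForm}. The output $\outVar=\cOper_{\switch}\state$ is then uniquely determined and lies in $L^2(\timeInt_n,\R^{\outVarDim})$ thanks to Assumption~\ref{ass:switchedSystem}\eqref{ass:switchedSystem:C}, completing the proof. No real obstacle is expected; the argument is essentially a bookkeeping step that packages the proposition in the form needed for the subsequent \OCP analysis.
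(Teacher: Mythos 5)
Your proposal is correct and follows exactly the route the paper intends: the corollary is stated as an immediate consequence of \Cref{prop:wellPosedness} with $f=\bOper_{\switch}\inpVar$ and $\kOper_{i,j}=\calI_H$, as the paper notes just before \Cref{ass:transitionOperators}. Your explicit verification that $f\in L^2(\timeInt_n,V')$ via the uniform bound on $b_i$ is a detail the paper leaves implicit, but it is the right (and only) thing to check.
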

\begin{definition}[Solution operator]
    Let the switched system~\eqref{eqn:weakForm} with an external switching signal $\switch$ satisfy \Cref{ass:switchedSystem}. Then, we define the solution operator $\solper\colon H\times \U_n\to\X_n$ as
    \begin{align}\nonumber
        \solper(\state_0,\inpVar)\vcentcolon= \state,
    \end{align}
    where $\state$ is the unique solution of~\eqref{eqn:weakForm} with initial value $\state_0\in H$ and control input $\inpVar\in\U_n$. To simplify the notation, we define $\solOper\colon\U_n\to\X_n$ as $\solOper\inpVar \vcentcolon= \solper(0,\inpVar)$ for all $\inpVar\in \U_n$. 
\end{definition}

We emphasize that the linearity of~\eqref{eqn:weakForm} implies that the solution operator is affine in the control input for a fixed initial value. If the initial value is zero, then the solution operator is linear and bounded in the control input. 
Its dual operator $\solOper'\colon\X_n' \to \U_n$ is given by
\begin{equation}\nonumber
    \langle \solOper' v , u \rangle_{\U_n} = \langle v, \solOper u\rangle_{\X_n',\X_n} \quad \text{for all }(u,v) \in \U_n \times \X_n',
\end{equation}
where we identified $\U_n$ with its dual space $\U_n'$.
\begin{remark}
We can interpret the switched system~\eqref{eqn:weakForm} as a linear time-varying problem. In particular, for given $\switch\in\addSwitching$, we define the time-dependent bilinear form
\begin{align*}
	m&\colon \timeInt_n\times H \times H\to \R,& (t,\phi,\varphi) &\mapsto \sum\nolimits_{i=1}^{\nrModes} \delta_{i,\switch(t)} m_i(\phi,\varphi),\\
	a&\colon \timeInt_n\times V \times V\to \R,& (t,\phi,\varphi) &\mapsto \sum\nolimits_{i=1}^{\nrModes} \delta_{i,\switch(t)} a_i(\phi,\varphi), \\
	b&\colon \timeInt_n\times \R^{\inpVarDim} \times V\to \R,& (t,\bar{\inpVar},\varphi) &\mapsto \sum\nolimits_{i=1}^{\nrModes} \delta_{i,\switch(t)} b_i(\bar{\inpVar},\varphi),
\end{align*}
where $\delta_{i,j} = 1$ if $i=j$ and zero otherwise, and consider the equivalent time-varying system
\begin{equation}\nonumber
	\left\{\quad\begin{aligned}
		m(t;\ddt \state(t),\varphi) &= a(t;\state(t),\varphi) + b(t;\inpVar(t),\varphi), && \text{ for all }\varphi\in V, \text{ a.e. in }\timeInt_n, \\
		\state(t_n) &= \state_n && \text{in }H.
	\end{aligned}\right.
\end{equation}
\end{remark}

\subsection{Guiding example}
\label{subsection:example}
Let $\Omega\subset \R^{2}$ be the rectangular domain given in \Cref{fig: geometry_tworooms} composed of the two rooms $\Omega_1$ and $\Omega_3$, the door $\Omega_2$ and the walls $\Omega_4$, inspired by the simpler one-dimensional model discussed in \cite{SchU18}. Suppose that $\partial \Omega$ is partitioned into two disjoint sets $\Gamma_c$ and $\Gamma_o$, where the control $u\colon[0,\infty)\to\R^{\inpVarDim}$ acts on $\Gamma_c$ via $\inpVarDim\in\N$ actuators. The switching signal $\switch\colon[0,\infty)\to \{1,2\}$ is used to distinguish the cases in which the door is closed ($\switch = 1$) or open ($\switch=2$). 
The goal is to control the following switched heat equation
\begin{subequations}
    \label{eq: parabolic_problem}
    \begin{align}
       \label{eq: parabolic_problem-a}
	   \zeta(\bx,\switch(t))\partial_t \state(t,\bx) + \difOper(\bx,\switch(t)) \state(t,\bx)&= 0, && (t,\bx) \in (0,\infty)\times\Omega,\\
       \kappa(\bs,\switch(t)) \tfrac{\partial}{\partial n}\state(t,\bs) + \gamma_{\mathrm{o}}\state(t,\bs)&= 0, && (t,\bs) \in (0,\infty) \times \Gamma_{\mathrm{o}},\\
	   \kappa(\bs,\switch(t)) \tfrac{\partial}{\partial n}\state(t,\bs) - \sum\nolimits_{i=1}^{\inpVarDim} \chi_i(\bs) \inpVar_i(t)&=0, && (t,\bs) \in (0,\infty) \times \Gamma_{\mathrm{c}},\\
	   \theta(0,\bx) &= 0, && \bx \in \Omega,
    \end{align}
\end{subequations}
with the differential operator
\begin{align*}
    \difOper(\bx,\switch)\state(\bx)\vcentcolon=-\nabla \cdot\big(\kappa(\bx,\sigma)\nabla \state(\bx)\big) + \advection(\bx)\cdot\nabla\state(\bx) + c(\bx)\state(\bx).
\end{align*}
The coefficients $\zeta$ and $\kappa$ are piecewise constant functions given as
\begin{align*}
    \zeta(\bx, \switch)\vcentcolon=
    \begin{cases}
        \zeta_{{\Omega_2}}(\switch)&\text{if }\bx\in \Omega_2,\\
        \zeta_2&\text{if }\bx\in\Omega_1\cup\Omega_3,\\
        \zeta_3&\text{if }\bx\in\Omega_4,
	\end{cases} \quad
    \kappa(\bx, \switch)\vcentcolon= 
    \begin{cases}
        \kappa_{\Omega_2}(\switch)&\text{if }\bx\in \Omega_2,\\
        \kappa_2&\text{if }\bx\in \Omega_1 \cup \Omega_3,\\
        \kappa_3&\text{if }\bx\in \Omega_4,
	\end{cases}
\end{align*}
\begin{equation*}
    \zeta_{\Omega_2}(\switch)\vcentcolon=
    \begin{cases}
        \zeta_1&\text{if }\sigma=1,\\
        \zeta_2&\text{if }\sigma=2,
	\end{cases}\quad\quad\kappa_{\Omega_2}(\switch)\vcentcolon= 
    \begin{cases}
        \kappa_1&\text{if }\sigma=1,\\
        \kappa_2&\text{if }\sigma=2
	\end{cases}
\end{equation*}
for certain positive constants $\zeta_i, \kappa_i$ $(i=1,\ldots,3)$. Moreover, we suppose $\gamma_{\mathrm{o}}\geq 0$, $c\in L^{\infty}(\Omega)$ with $c$ uniformly positive, $\advection\in L^\infty(\Omega, \R^2)$ with $\nabla \cdot \advection = 0$, $\advection \cdot n \geq 0$. 
\begin{figure}
    \begin{center}
    \begin{tikzpicture}
    \draw[line width=2pt,color=black] (0,-1)--(-3,-1) -- (-3,1)--(0,1);
    \draw[line width=2pt,color=black](0,1)--(3,1)--(3,-1)--(0,-1);
    \draw[line width=2pt,color=blue] (0,-1)--(0,-0.3);
    \draw[line width=2pt,color=blue] (0,0.3)--(0,1);
    \draw[line width=2pt,color=teal] (0,0.3)--(0,-0.3);
    \draw[line width=2pt,color=red] (-3,-1) -- (-3,1);
    \node [below] at (0,-1) {};
    \node  at (-1.5,0) {$\Omega_1$};
    \node[left]  at (0,0.5) {\color{blue}$\Omega_4$};
    \node[left]  at (-3,0) {\color{red}$\Gamma_c$};
    \node[left]  at (-0,0) {\color{teal}$\Omega_2$};
    \node  at (1.5,0) {$\Omega_3$};
    \end{tikzpicture}
    \end{center}
    \caption[a]{The domain $\Omega$ and the two rooms $\Omega_1$, $\Omega_3$. The switching signal $\sigma$ acts on the door $\Omega_2$ between the walls $\Omega_4$ and the control $u$ acts on $\Gamma_c$.}
    \label{fig: geometry_tworooms}
\end{figure}
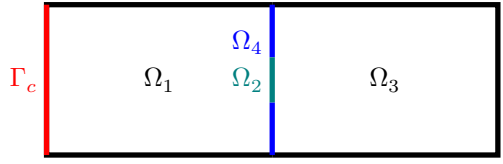
To derive the abstract form~\eqref{eqn:weakForm}, we introduce the spaces $V \vcentcolon= H^1(\Omega)$ and $H \vcentcolon=L^2(\Omega)$ and the bilinear forms $m_i\colon H\times H \to \R$ and $a_i\colon V \times V \to \R$ defined as
\begin{align*}
    m_i(\phi,\varphi)&\vcentcolon= \int_\Omega\zeta(\cdot\,,i)\varphi\phi\,\mathrm d\bx, & 
    a_i(\varphi,\phi)&\vcentcolon=\int_\Omega\kappa(\cdot\,,i)\nabla \varphi\cdot\nabla\phi + \big(\advection\cdot\nabla\varphi\big)\phi+c \varphi \phi\,\mathrm d\bx + \gamma_o\int_{\Gamma_o}\varphi\phi\,\mathrm d\bs
\end{align*}
for $i=1,2$. For the control operator, we define the bilinear form $b\colon \R^{\inpVarDim}\times H \to \R$ given by 
\begin{align*}
	 b(\bar{\inpVar},\phi) \vcentcolon= \sum\nolimits_{i=1}^{\inpVarDim} \bar{\inpVar}_i \langle \chi_i,\phi\rangle_{L^2(\Gamma_{\mathrm{c}})},
\end{align*}
where $\chi_1,\ldots, \chi_\omega$ denote the characteristic functions of a disjoint partition of $\Gamma_c$ into $\inpVarDim$ equally sized subdomains.
Note that~$b$ is independent of the switching signal, such that we simply set $b_i = b$ for $i=1,2$.
For the output functional, we are interested in the average temperature in the two rooms $\Omega_1$ and $\Omega_3$. Thus, we set $\outVarDim=2$ and consider the output $\cOper:H\to\R^{\outVarDim},\ \varphi \mapsto y=(y_1,y_2)=\cOper\varphi$ with
\begin{align*}
    y_1 \vcentcolon= \frac{1}{|\Omega_1|}\int_{\Omega_1}\varphi(\bx)\,\mathrm d\bx,\quad y_2 \vcentcolon=\frac{1}{|\Omega_3|}\int_{\Omega_3}\varphi(\bx)\,\mathrm d\bx.
\end{align*}
It can be shown that \Cref{ass:switchedSystem} is satisfied for this example.
%
\subsection{Well-posedness of the optimal control problem}
\label{subsec:optimalControl}
Before we study the existence, uniqueness, and characterization of minimizers of the \OCP~\eqref{eqn:MPCsubproblem}, we make the following observation. Let $\hat{\state} \vcentcolon= \solper(\state_n,0)$ denote the solution of~\eqref{eqn:weakForm} for the zero input. Then any solution $\state\in\X_n$ of~\eqref{eqn:weakForm} can be written as $\state = \solOper\inpVar + \hat{\state}$.
Using the solution operator, we can formulate the open-loop optimal control problem \eqref{eqn:MPCsubproblem} solely in the input variable. In more detail, define $\hatoutVarDes=\cOper_\switch\hat \state$ and
\begin{align*}
    \Jhat_n\colon\U_n\to \R,\qquad \inpVar \mapsto J_n(\cOper_{\switch}\solOper\inpVar+\hatoutVarDes,\inpVar)
\end{align*}
with $J_n$ as in~\eqref{eqn:cost:fun}. The resulting \OCP then reads
\begin{equation}
    \label{eqn:MPCsubproblemControlreduced}
    \min \Jhat_n(\inpVar)\quad\text{s.t.}\quad\inpVar\in \U_n,
\end{equation}
which is equivalent to~\eqref{eqn:MPCsubproblem} due to \Cref{prop:wellPosedness}.
For the well-posedness of the finite-horizon \OCP~\eqref{eqn:MPCsubproblem} or equivalently \eqref{eqn:MPCsubproblemControlreduced}, we make the following standard assumption.
\begin{assumption}
	\label{ass:propertiesG}
	The function $g_n\colon\U_n\to \R\cup \{\pm \infty \}$ in~\eqref{eqn:cost:fun} is proper, convex, and lower semicontinuous.
\end{assumption}
\begin{theorem}[Unique solution of the \OCP]
    \label{thm:wellPosednessOCP}
    Let \Cref{ass:switchedSystem,ass:propertiesG} be satisfied. Then there exists a unique minimizer $\optInpVarN \in \U_n$ of \eqref{eqn:MPCsubproblemControlreduced}. Moreover, there exists a unique $(\optOutVar,\optInpVarN)$ solving the \OCP~\eqref{eqn:MPCsubproblem}.
\end{theorem}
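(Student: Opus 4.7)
The plan is to run the direct method of the calculus of variations on the reduced problem~\eqref{eqn:MPCsubproblemControlreduced}, exploiting that the Tikhonov term $\tfrac{\lambda}{2}\|\inpVar-\inpVarDes\|^2_{\R^{\inpVarDim}}$ with $\lambda>0$ provides both coercivity and strict convexity, and that the remaining terms are either affine-quadratic (and hence continuous and convex) or covered by \Cref{ass:propertiesG}. Equivalence of the two problems then follows because \Cref{prop:wellPosedness}/\Cref{cor:wellPosednessForward} gives a bijection $\inpVar\mapsto (\solper(\state_n,\inpVar),\cOper_\switch\solper(\state_n,\inpVar))$ between admissible controls and admissible triples.

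First I would verify that $\Jhat_n$ is well-defined and proper. Since $g_n$ is proper by \Cref{ass:propertiesG}, pick $\inpVar_0\in\U_n$ with $g_n(\inpVar_0)<+\infty$; by \Cref{cor:wellPosednessForward} and \Cref{ass:switchedSystem}\ref{ass:switchedSystem:C}, the map $\inpVar\mapsto \cOper_\switch\solOper\inpVar+\hatoutVarDes$ takes values in $L^2(\timeInt_n,\R^{\outVarDim})\cap C(\closure{\timeInt}_n,\R^{\outVarDim})$, so all integrals and terminal evaluations in~\eqref{eqn:cost:fun} are finite for $\inpVar_0$, whence $\Jhat_n(\inpVar_0)<+\infty$. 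Lower semicontinuity: the map $\inpVar\mapsto\cOper_\switch\solOper\inpVar$ is linear and bounded $\U_n\to L^2(\timeInt_n,\R^{\outVarDim})$, and point evaluation $\inpVar\mapsto (\cOper_\switch\solper(\state_n,\inpVar))(T_n)$ is linear and continuous $\U_n\to\R^{\outVarDim}$ (by the continuous embedding $\X_n\hookrightarrow C(\closure{\timeInt}_n,H)$). Thus the tracking, terminal, and regularization quadratic terms are continuous in $\inpVar$, and $g_n$ is lsc by assumption; hence $\Jhat_n$ is lsc. Convexity follows because each quadratic term $\tfrac12\|L\inpVar-z\|^2$ for an affine $L$ is convex, and $g_n$ is convex. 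Strict convexity comes from the control regularization $\tfrac{\lambda}{2}\|\inpVar-\inpVarDes\|^2_{L^2}$ with $\lambda>0$, since the sum of a strictly convex function and a convex function is strictly convex.

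Coercivity is the only slightly delicate point. Using $\|\inpVar\|_{\U_n}^2\le 2\|\inpVar-\inpVarDes\|_{\U_n}^2+2\|\inpVarDes\|_{\U_n}^2$ with $\inpVarDes\in L^\infty\subset L^2(\timeInt_n,\R^{\inpVarDim})$ (the interval being bounded), the control term alone dominates $\tfrac{\lambda}{4}\|\inpVar\|^2_{\U_n}$ up to a constant. The two quadratic output terms are non-negative, so they do not spoil the bound from below. For $g_n$, convexity and properness imply the standard affine minorant
\begin{equation*}
    g_n(\inpVar)\ge \langle \xi,\inpVar-\inpVar_0\rangle_{\U_n}+g_n(\inpVar_0)\quad\text{for all }\inpVar\in\U_n,
\end{equation*}
with some $\xi$ in the subdifferential of $g_n$ at an interior point of its effective domain (or, if $g_n$ is not subdifferentiable, one replaces this by any continuous affine minorant, which exists because a proper convex lsc function on a Hilbert space is the supremum of its continuous affine minorants). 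Combining, we obtain
\begin{equation*}
    \Jhat_n(\inpVar)\ge \tfrac{\lambda}{4}\|\inpVar\|_{\U_n}^2 - c_1\|\inpVar\|_{\U_n}-c_2
\end{equation*}
for constants $c_1,c_2\in\R$, which is coercive.

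From coerciveness, properness, convexity and lower semicontinuity, the direct method yields a minimizer $\optInpVarN\in\U_n$: take a minimizing sequence $(\inpVar^k)$, obtain boundedness in the Hilbert space $\U_n$, extract a weakly convergent subsequence $\inpVar^k\rightharpoonup\optInpVarN$, and use weak lower semicontinuity (convex plus strongly lsc implies weakly lsc) to conclude $\Jhat_n(\optInpVarN)\le\liminf_k\Jhat_n(\inpVar^k)=\inf\Jhat_n$. Uniqueness follows from strict convexity. Finally, setting $\optState\vcentcolon=\solper(\state_n,\optInpVarN)$ and $\optOutVar\vcentcolon=\cOper_\switch\optState$ yields the unique minimizing triple for~\eqref{eqn:MPCsubproblem}; any other minimizer would produce, by inversion of the affine map $\inpVar\mapsto(\solper(\state_n,\inpVar),\cOper_\switch\solper(\state_n,\inpVar))$, a second minimizer of $\Jhat_n$, contradicting uniqueness above. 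The main (minor) obstacle is simply handling the abstract $g_n$ in the coercivity estimate; no growth condition on $g_n$ is available, so one must invoke the affine minorant property of proper convex lsc functions rather than any explicit coercivity of $g_n$ itself.
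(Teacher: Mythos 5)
Your proposal is correct and follows essentially the same route as the paper: the paper's proof simply asserts that $\lambda>0$ and $\solOper\in\Li(\U_n,\X_n)$ make $\Jhat_n$ strictly convex, coercive, and weakly lower semicontinuous, cites a standard existence result, and invokes the equivalence of \eqref{eqn:MPCsubproblemControlreduced} and \eqref{eqn:MPCsubproblem}. You have merely spelled out the verifications (including the affine-minorant argument for the coercivity in the presence of the abstract $g_n$, which the paper leaves implicit), so there is nothing to object to.
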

\begin{proof}
    Due to $\lambda>0$ and $\solOper\in\Li(\U_n,\X_n)$, the function $\Jhat_n$ is strictly convex, coercive, and weakly lower semicontinuous in $\U_n$. Thus, the existence of a unique minimizer $\optInpVarN$ follows from standard arguments (see, e.g., \cite[Thm.~2.14]{troltzsch2024optimal}).
    Due to the equivalence of the \OCPs~\eqref{eqn:MPCsubproblemControlreduced} and~\eqref{eqn:MPCsubproblem}, the second claim directly follows.
\end{proof}
Next, we aim to characterize the unique solution $\optInpVarN$ by examining the first-order optimality conditions. We will see that the switched bilinear form $m_{\switch}$ interacting with the time derivative generates a discontinuous adjoint dynamic (in the $H$ inner product).
Note that the function~$g_n$ is generally not differentiable in the classical sense, which motivates introducing the convex subdifferential of $g_n$ at $\optInpVarN$ (cf.~\cite[Def.~16.1]{bauschke2011convex})
\begin{align}\label{eqn: convex subdifferential}
		\partial g_n(\optInpVarN)\vcentcolon=\left\{w\in \U_n \ \big| \ {\langle w, v-\optInpVarN \rangle}_{\U_n} \leq g_n(v) -g_n(\optInpVarN)\ \text{ for all }v \in \U_n \right\},
\end{align}
which is non-empty due to \Cref{ass:propertiesG}. Using this characterization, the optimality condition can be formulated as follows.
\begin{theorem}[First-order optimality condition]
    \label{thm:FONC:MPC}
    Let \OCP~\eqref{eqn:MPCsubproblem} satisfy \Cref{ass:switchedSystem,ass:propertiesG}, consider the set of switching time points $t_{n,i}\in \calT_n$ \eqref{eqn:swi:tim} for $i=1,\ldots,\nrSwitches-1$. Then, $\optInpVarN \in \U_n$ is optimal for \eqref{eqn:MPCsubproblemControlreduced} if and only if there exists unique
    \begin{equation*}
        \optState\in\X_n\quad\text{and}\quad\optAdState \in \X_{n,\switch},
    \end{equation*}
    with $\X_{n,\switch}$ defined in \eqref{eqn:swi:sol:space}, that solve the coupled system 
    \begin{subequations}
        \label{eqn:MPCsubproblem:FONC}
        \begin{empheq}[left=\left\{,right=\right.]{align}
            \label{eq: state mpc subproblem 1}
            m_{\switch}(\ddt \optState,\varphi) + a_{\switch}(\optState,\varphi) &= b_{\switch}(\optInpVarN, \varphi)&& \text{a.e.~in $\timeInt_n$,} \\
            \label{eq: state mpc subproblem 2}
            \optState(t_n) &= \state_n && \text{in $H$,}\\
            \label{eq: state mpc subproblem 3}
            \optOutVar 	&= \cOper_{\switch}\optState && \text{a.e. in $\timeInt_n$,}\\ 
            \label{eq: adjoint mpc subproblem 1}
            - m_{\switch}(\ddt \optAdState,\varphi) + a_{\switch}(\varphi,\optAdState) &= \langle \optOutVar - \outVarDes, \cOper_{\switch}\varphi\rangle_{\R^{\outVarDim}}&& \text{a.e.~in $\timeInt_n$,}\\
            \label{eq: adjoint mpc subproblem 2}
            m_{\switch(T_n)}(\varphi,\optAdState(T_n))&={\mu}\langle \optOutVar(T_n)- \outVar_{T_n},\cOper_{\switch(T_n)} \varphi\rangle_{\R^{\outVarDim}}&& \\ 
            \label{eq: adjoint mpc subproblem 3}
            m_{\switch(t_{n,i}^-)}(\varphi, \optAdState(t_{n,i}^-))&=m_{\switch(t^+_{n,i})}(\varphi,\optAdState(t_{n,i}^+)) && \text{for all $t_{n,i}\in \calT_n$,}\\  
            \label{eq: state mpc subproblem gradient}
          \langle   \lambda (\optInpVarN-\inpVarDes)  +\bOper_{\switch}' \optAdState, v - \optInpVarN\rangle_{\U_n}&\geq g_n(\optInpVarN) -g_n(v)
        \end{empheq}
    \end{subequations}
    for all $\varphi\in V$, all $v\in \U_n$.
    If $g_n \equiv 0$, \eqref{eq: state mpc subproblem gradient} reduces to
    \begin{equation}\label{eq: unconstrained opt condition}
      \lambda (\optInpVarN-\inpVarDes)  +\bOper_{\switch}' \optAdState = 0\quad \text{in }\U_n.
    \end{equation}
\end{theorem}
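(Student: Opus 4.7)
The plan is to decompose the reduced cost as $\Jhat_n = J_n^{\mathrm{smooth}} + g_n$, where
\begin{align*}
J_n^{\mathrm{smooth}}(\inpVar) \vcentcolon= \int_{\timeInt_n}\ell\bigl(t,\cOper_\switch(\solOper\inpVar+\hat\state)(t),\inpVar(t)\bigr)\dt + \tfrac{\mu}{2}\bigl\|\cOper_{\switch(T_n)}(\solOper\inpVar+\hat\state)(T_n)-\outVar_{T_n}\bigr\|_{\R^{\outVarDim}}^2
\end{align*}
is strictly convex and Fr\'echet differentiable by linearity of $\solOper$ and smoothness of $\ell$. Because $g_n$ is proper, convex and lower semicontinuous by \Cref{ass:propertiesG}, the standard convex optimality principle (see e.g.~\cite[Ch.~16]{bauschke2011convex}) yields that $\optInpVarN$ is the unique minimizer of~\eqref{eqn:MPCsubproblemControlreduced} if and only if $-\nabla J_n^{\mathrm{smooth}}(\optInpVarN)\in\partial g_n(\optInpVarN)$, which by the definition~\eqref{eqn: convex subdifferential} is precisely~\eqref{eq: state mpc subproblem gradient} once the gradient is identified. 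The unconstrained case then follows by testing with $v=\optInpVarN\pm w$ for arbitrary $w\in\U_n$, producing~\eqref{eq: unconstrained opt condition}.

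To represent $\nabla J_n^{\mathrm{smooth}}(\optInpVarN)$, I would compute its action on an arbitrary direction $v\in\U_n$ via the chain rule. Denoting the sensitivity $\delta\state\vcentcolon=\solOper v\in\X_n$, which solves the linearized primal equation with homogeneous initial condition and right-hand side $b_\switch(v,\cdot)$, the derivative reads
\begin{align*}
\langle\nabla J_n^{\mathrm{smooth}}(\optInpVarN),v\rangle_{\U_n}
&= \int_{\timeInt_n}\langle\optOutVar-\outVarDes,\cOper_\switch\delta\state\rangle_{\R^{\outVarDim}}\dt + \mu\langle\optOutVar(T_n)-\outVar_{T_n},\cOper_{\switch(T_n)}\delta\state(T_n)\rangle_{\R^{\outVarDim}}\\
&\quad+ \lambda\int_{\timeInt_n}\langle\optInpVarN-\inpVarDes,v\rangle_{\R^{\inpVarDim}}\dt.
\end{align*}
To eliminate $\delta\state$, I introduce $\optAdState$ as the unique solution of the backward switched system~\eqref{eq: adjoint mpc subproblem 1}--\eqref{eq: adjoint mpc subproblem 3}. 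Its well-posedness in $\X_{n,\switch}$ follows from \Cref{prop:wellPosedness} applied to the time-reversed problem, where the transition condition~\eqref{eq: adjoint mpc subproblem 3} defines bounded state transition operators in $\Li(H,H)$ via Riesz representation: since each $m_i$ is equivalent to the pivot inner product on $H$ by \Cref{ass:switchedSystem}, the map $\optAdState(t^+)\mapsto\optAdState(t^-)$ is a well-defined isomorphism on $H$ at every $t\in\calT_n$, and \Cref{ass:transitionOperators} is satisfied trivially at points where $\switch(t^-)=\switch(t^+)$.

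The core computation is a piecewise integration by parts. On each maximal subinterval $(t_{n,i},t_{n,i+1})$ on which $\switch$ is constant, I would test~\eqref{eq: state mpc subproblem 1} for the sensitivity $\delta\state$ with input $v$ against $\optAdState$, test~\eqref{eq: adjoint mpc subproblem 1} against $\delta\state$, subtract, and integrate in time. The $a_\switch$-terms cancel, and the temporal terms telescope into boundary contributions of the form $m_{\switch(t_{n,i+1}^-)}(\delta\state(t_{n,i+1}^-),\optAdState(t_{n,i+1}^-)) - m_{\switch(t_{n,i}^+)}(\delta\state(t_{n,i}^+),\optAdState(t_{n,i}^+))$. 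Summing over $i=0,\ldots,\nrSwitches-1$, the internal terms at each $t_{n,i}\in\calT_n$ cancel by~\eqref{eq: adjoint mpc subproblem 3} and the continuity of $\delta\state$ (since $\kOper_{i,j}=\calI_H$ in the primal), the contribution at $t_n$ vanishes because $\delta\state(t_n)=0$, and the contribution at $T_n$ reproduces $\mu\langle\optOutVar(T_n)-\outVar_{T_n},\cOper_{\switch(T_n)}\delta\state(T_n)\rangle_{\R^{\outVarDim}}$ via~\eqref{eq: adjoint mpc subproblem 2}. What survives is $\int_{\timeInt_n}b_\switch(v,\optAdState)\dt=\langle\bOper_\switch'\optAdState,v\rangle_{\U_n}$, which, together with the $\lambda$-term, yields $\nabla J_n^{\mathrm{smooth}}(\optInpVarN)=\lambda(\optInpVarN-\inpVarDes)+\bOper_\switch'\optAdState$, completing the proof.

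The main obstacle is the careful bookkeeping of jump contributions at switching times: $\optAdState$ is generically discontinuous as an $H$-valued function, and the \emph{correct} notion of continuity is the one in the $m_\switch$-dual sense encoded by~\eqref{eq: adjoint mpc subproblem 3}. Establishing well-posedness of the backward system in $\X_{n,\switch}$ and verifying that the telescoping of the boundary terms exactly absorbs the terminal cost while leaving no residual jump term is where the switched structure enters non-trivially and must be handled with care.
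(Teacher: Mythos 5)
Your proposal is correct and follows essentially the same route as the paper: split the reduced cost into a smooth quadratic part plus $g_n$, apply Fermat's rule with the convex subdifferential, obtain adjoint well-posedness by time reversal of \Cref{prop:wellPosedness} with $m$-induced transition operators (the paper's \Cref{cor:wellPosedAdjoint}), and identify the gradient via piecewise integration by parts in which the jump terms at switching times are absorbed by the adjoint transition condition (the paper's \Cref{lemma:S-IBP}). The only cosmetic difference is that you compute the directional derivative through the sensitivity $\solOper v$ while the paper writes the same identity with the dual operators $\solOper'\cOper_{\switch}'$.
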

Before proceeding with the proof of \Cref{thm:FONC:MPC}, let us show that problem \eqref{eq: adjoint mpc subproblem 1}-\eqref{eq: adjoint mpc subproblem 3}, often called the \textit{adjoint system}, is well-posed. We emphasize that \eqref{eq: adjoint mpc subproblem 3} constitutes a state transition at the switching times.
\begin{corollary}[Well-posedness of the adjoint equation]
    \label{cor:wellPosedAdjoint}
    Let $\switch\in\addSwitching$ and system~\eqref{eqn:weakForm} satisfy \Cref{ass:switchedSystem}. Then for every $\outVar\in C(\closure{\timeInt}_n, \R^\outVarDim)$, there exists a unique $\adState \in \X_{n,\switch}$ solving
    \begin{subequations}
        \label{eqn:AdjointSystem}
        \begin{empheq}[left=\left\{\quad,right=\right.]{align}
            \label{eq: adjoint mpc subproblem 1 introduced}
            - m_{\switch}(\ddt \adState,\varphi) + a_{\switch}(\varphi,\adState) &= \langle \outVar - \outVarDes, \cOper_{\switch}\varphi\rangle_{\R^{\outVarDim}}&& \text{a.e. in $\timeInt_n$},\\
            \label{eq: adjoint mpc subproblem 2 introduced}
            m_{\switch(T_n)}(\varphi,\adState(T_n))&={\mu}\langle \outVar(T_n)- \outVar_{T_n},\cOper_{\switch(T_n)} \varphi\rangle_{\R^{\outVarDim}}&& \\ 
            \label{eq: adjoint mpc subproblem 3 introduced}
            m_{\switch(t_{n,i}^-)}(\varphi, \adState(t_{n,i}^-))&=m_{\switch(t^+_{n,i})}(\varphi,\adState(t_{n,i}^+)), && \text{for all $t_{n,i}\in \calT_n$}
        \end{empheq}
    \end{subequations}
    for all $\varphi\in V$. The corresponding solution operator is denoted as
    \begin{equation}\nonumber
        \solA\colon C(\closure{\timeInt}_n, \R^\outVarDim)\to \X_{n,\switch},\quad
        \outVar\mapsto \solA(\outVar)=\adState.
    \end{equation}
\end{corollary}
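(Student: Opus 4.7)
The strategy is to reduce \eqref{eqn:AdjointSystem} to a forward switched problem of the form \eqref{eqn:switchedPDE} and then invoke \Cref{prop:wellPosedness}.

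First, I would reverse time by setting $\tau \vcentcolon= T_n + t_n - t$ and $q(\tau) \vcentcolon= \adState(T_n + t_n - \tau)$, and define correspondingly reversed switching signal $\tilde\switch(\tau) \vcentcolon= \switch(T_n + t_n - \tau)$, which still belongs to $\addSwitching$ on $\timeInt_n$ (with the left/right limits swapped at the switching times). Introduce the transposed bilinear forms $\tilde a_i(\phi,\varphi) \vcentcolon= a_i(\varphi,\phi)$. Since $a_i(\phi,\phi) = \tilde a_i(\phi,\phi)$, the symmetric part is unchanged, so $\tilde a_i$ inherits the uniform G\r{a}rding inequality and continuity bounds of \Cref{ass:switchedSystem}\ref{ass:switchedSystem:A}. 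Under this substitution, equation \eqref{eq: adjoint mpc subproblem 1 introduced} becomes the forward parabolic relation $m_{\tilde\switch}(\ddt q,\varphi) + \tilde a_{\tilde\switch}(q,\varphi) = \langle \outVar_\tau - \outVarDes_\tau,\cOper_{\tilde\switch}\varphi\rangle_{\R^{\outVarDim}}$ in $\timeInt_n$, where the right-hand side, viewed as an element of $V'$, lies in $L^2(\timeInt_n,V')$ by \Cref{ass:switchedSystem}\ref{ass:switchedSystem:C} together with $\outVar\in C(\closure\timeInt_n,\R^\outVarDim)$ and $\outVarDes\in L^\infty$.

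Next, I would convert the terminal and transition conditions into (initial) state transitions. For the terminal data, since $m_{\switch(T_n)}$ defines an inner product on $H$ equivalent to $\langle\cdot,\cdot\rangle_H$ by \Cref{ass:switchedSystem}\ref{ass:switchedSystem:M}, and $\varphi\mapsto \mu\langle\outVar(T_n)-\outVar_{T_n},\cOper_{\switch(T_n)}\varphi\rangle_{\R^\outVarDim}$ is a bounded linear functional on $H$, the Riesz representation theorem uniquely determines $q(t_n) = \adState(T_n)\in H$. At each reversed switching time, the jump condition \eqref{eq: adjoint mpc subproblem 3 introduced} defines, for each fixed $\phi\in H$, a bounded linear functional $\varphi\mapsto m_j(\varphi,\phi)$ on $H$, which by Riesz representation in the inner product $m_i$ yields a unique $\kOper_{j,i}\phi\in H$ satisfying $m_i(\varphi,\kOper_{j,i}\phi) = m_j(\varphi,\phi)$ for all $\varphi\in H$ (and hence for all $\varphi\in V$ by density). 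The bounds in \Cref{ass:switchedSystem}\ref{ass:switchedSystem:M} imply $\kOper_{j,i}\in\Li(H,H)$ with $\|\kOper_{j,i}\|\leq \contM/\coerM$, and $\kOper_{i,i}=\calI_H$, so \Cref{ass:transitionOperators} holds.

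With these ingredients, the time-reversed system matches the template \eqref{eqn:switchedPDE} verbatim: a forward switched parabolic problem with G\r{a}rding bilinear forms, bounded state-transition operators at the jumps of the switching signal, and right-hand side in $L^2(\timeInt_n,V')$. \Cref{prop:wellPosedness} then yields a unique $q\in\X_{n,\tilde\switch}$; reversing time back produces the unique $\adState\in\X_{n,\switch}$ solving \eqref{eqn:AdjointSystem}, and the solution operator $\solA$ is well defined. The only step requiring any care is the construction of the adjoint transition operators $\kOper_{j,i}$ and the verification that $\tilde a_i$ preserves the structural assumptions; both are routine consequences of \Cref{ass:switchedSystem} but should be stated explicitly so that the invocation of \Cref{prop:wellPosedness} is clean.
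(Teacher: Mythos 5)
Your proposal is correct and follows essentially the same route as the paper: time reversal of the adjoint system into a forward switched parabolic problem, identification of the initial value and the transition operators $\kOper_{i,j}=\calM_j^{-1}\calM_i$ via the Riesz representation in the $m_i$-inner products, and an appeal to \Cref{prop:wellPosedness}. Your explicit verification that the transposed forms $\tilde a_i$ inherit the G\r{a}rding and continuity bounds is a detail the paper leaves implicit, but the argument is the same.
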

\begin{proof}   
    To apply \Cref{prop:wellPosedness}, we introduce the variable $\gamma(t) = \adState(T_n-t)$ and the switching signal $\tilde{\switch}(t) = \switch(T_n-t)$ for almost all (f.a.a.) $t\in\timeInt_n$ and obtain the switched system
    \begin{equation}\nonumber
        \left\{\quad\begin{aligned}
            m_{\tilde{\switch}(t)}(\ddt \gamma(t),\varphi) + a_{\tilde{\switch}(t)}(\varphi,\gamma(t)) &= \langle f(t),\varphi\rangle_{\R^{\outVarDim}}&& \text{f.a.a. } t\in \timeInt_n,\\
            \gamma(0) &= \gamma_0 && \text{in $H$},\\ 
            \gamma(t) &= \calK_{\tilde{\switch}(t^-),\tilde{\switch}(t^+)}\gamma(t^-) && \text{in $H$, for all $t\in\calT_n$}, 
        \end{aligned}\right.
    \end{equation}
    with $f\vcentcolon=\cOper_{\tilde{\switch}}'(\outVar(T_n-\cdot) - \outVarDes(T_n-\cdot))\in L^2(\timeInt_n,V')$, $\gamma_0$ being the unique solution of
    \begin{align*}
        m_{\switch(T_n)}(\gamma_0,\varphi) &= \mu\langle \outVar(T_n)- \outVar_{T_n},\cOper_{\switch(T_n)} \varphi\rangle_{\R^{\outVarDim}}\qquad\text{for all $\varphi\in H$},
    \end{align*}
    and $\kOper_{ij} \vcentcolon= \calM_j^{-1}\calM_i$. The result follows now immediately from \Cref{prop:wellPosedness}.
\end{proof}

\begin{proof}[Proof of \Cref{thm:FONC:MPC}]
    First, we introduce the evaluation operator $\calE_{T_n}\in \calL(C(\closure{\timeInt}_n,\R^p),\R^p)$ and $F_n\colon\U_n\to\R$ by
    \begin{align*}
        \calE_{T_n}\outVar &\vcentcolon= \outVar(T_n), &
        F_n(\inpVar) &\vcentcolon= \tfrac{1}{2}\int_{\timeInt_n} \ell(t,(\cOper_{\switch}\calS\inpVar(t)+\hatoutVarDes(t), \inpVar(t)) \dt + \tfrac{\mu}{2} \|\calE_{T_n}\cOper_{\switch}\calS\inpVar-\hat y_{T_n}\|^2_{\R^{\outVarDim}},
    \end{align*}
    with $\hatoutVarDes$ introduced in the beginning of \Cref{subsec:optimalControl} and $\hat{y}_{T_n}=\hatoutVarDes(T_n)$. Consequently, we obtain $\Jhat_n(\inpVar) = F_n(\inpVar)+ g_n(\inpVar)$ and the necessary optimality condition for \eqref{eqn:MPCsubproblemControlreduced}, given by Fermat's principle \cite[Thm.~16.3]{bauschke2011convex}, as
    \begin{equation} \nonumber
        -\nabla F_n(\optInpVarN) \in \partial g_n(\optInpVarN).
    \end{equation}
    By \eqref{eqn: convex subdifferential} this is equivalent to
    \begin{equation} \nonumber
        \langle \nabla F_n(\optInpVarN), v-\optInpVarN\rangle_{\U_n} \geq g_n(\optInpVarN)-g_n(v) \quad \text{for all }v \in \U_n.
    \end{equation}
    Due to the strict convexity of $\Jhat_n$ the condition is also sufficient. Since $F_n$ is quadratic, we obtain
    \begin{equation}\label{eqn: derivation of gradient of Fn}\nonumber
        \nabla F_n(\optInpVarN) = \lambda (\optInpVarN-\inpVarDes)+\solOper'\cOper_{\switch}'(\cOper_{\switch}\solOper \optInpVarN-{\hatoutVarDes})+\mu \solOper'\cOper_{\switch}'\calE_{T_n}'(\calE_{T_n}\cOper_{\switch}\solOper \optInpVarN-\hat \outVar_{T_n}).
    \end{equation}
    Let $\optAdState \in \X_{n,\sigma}$ be the unique solution of \eqref{eq: adjoint mpc subproblem 1}-\eqref{eq: adjoint mpc subproblem 3}. Thus, the claim follows if we can show
    \begin{equation*}
         \langle\calB_{\sigma}'\optAdState, v\rangle_{\U_n}= \langle\solOper'\cOper_{\switch}'(\cOper_{\switch}\solOper \optInpVarN-{\hatoutVarDes})+\mu \solOper'\cOper_{\switch}'\calE_{T_n}'(\calE_{T_n}\cOper_{\switch}\solOper \optInpVarN-\hat \outVar_{T_n}),v\rangle_{\U_n}
    \end{equation*}
    for all $v\in \U_n$. Note that here we understand $\bOper'_\switch \in \calL(L^2(\timeInt_n,V), \U_n)$ by identifying $\U_n$ with its dual. Using the definition of the solution operator $\solOper$ and \Cref{lemma:S-IBP} we obtain
    \begin{align*}
        \langle \calB_{\switch}'\optAdState , v \rangle_{\U_n} 
        &= \int_{t_n}^{T_n} \langle \calB_{\sigma(t)}v(t),\optAdState(t)\rangle_{V',V} \dt =\int_{t_n}^{T_n} b_{\switch(t)}\big(v(t),\optAdState(t)\big)\dt\\
        &= \int_{t_n}^{T_n} m_{\switch(t)}\big((\ddt\solOper v)(t),\optAdState(t)\big) + a_{\switch(t)}\big((\solOper v)(t),\optAdState(t)\big) \dt\\
        &= \int_{t_n}^{T_n}-m_{\switch_i}\big(\ddt \optAdState(t),(\solOper v)(t)\big) + a_{\switch(t)}\big((\solOper v)(t),\optAdState(t)\big) \dt \\
        &\qquad +   \sum_{i=1}^{\nrSwitches-1} m_{\switch(t^-_{n,i})}\big((\solOper v)(t_{n,i}^{-}),\optAdState(t_{n,i}^-)\big) - m_{\switch(t^{+}_{n,i})}\big((\solOper v)(t_{n,i}^+),\optAdState(t_{n,i}^+))\big) \\ 
        &\qquad +m_{\switch(T_n)}((\solOper v)(T_n),\optAdState(T_n))-m_{\switch(t_n)}((\solOper v)(t_n),\optAdState(t_n))\\
        &= \int_{t_n}^{T_n} -m_{\switch_i}\big(\ddt\left( \optAdState(t)\right),(\solOper v)(t)\big) + a_{\switch(t)}\big((\solOper v)(t),\optAdState(t)\big) \dt \\
        &\qquad +m_{\switch(T_n)}((\solOper v)(T_n),\optAdState(T_n)),
    \end{align*}
    where the components of the sum vanish due to~\eqref{eq: adjoint mpc subproblem 3}. Note that we also used $(\solOper v)(t_n) = 0$ and that $t_n, T_n$ are no switching points, i.e., $\switch$ is continuous at $t_n,T_n$. Using now the adjoint equation \eqref{eq: adjoint mpc subproblem 1}-\eqref{eq: adjoint mpc subproblem 3}, $\solOper(\optInpVarN) = \optState-\hat \state$, and the definition of $\hatoutVarDes$ and $\hat \outVar_T$, we obtain
    \begin{align*}
        \langle \bOper_{\switch}'\optAdState , v \rangle_{\U_n} 
        &= \int_{t_n}^{T_n} \langle \optOutVar(t) - \outVarDes(t), (\cOper_{\switch}\solOper v)(t)\rangle_{\R^{\outVarDim}} \dt  + {\mu}\langle \optOutVar(T_n)- \outVar_{T_n},\calE_{T_n}\cOper_{\switch}\solOper v\rangle_{\R^{\outVarDim}} \\
     &=  \langle \cOper_{\switch}\solOper \optInpVarN - \hatoutVarDes, \cOper_{\switch}\solOper v\rangle_{L^2(\timeInt_n, \R^p)}  + \mu \langle \calE_{T_n}\cOper_{\switch}\solOper\optInpVarN- \hat{\outVar}_{T_n},\calE_{T_n}\cOper_{\switch}\solOper v\rangle_{\R^{\outVarDim}}\\
     &=  \langle \solOper'\cOper_{\switch}'(\cOper_{\switch}\solOper\optInpVarN - \hatoutVarDes), v \rangle_{\U_n}  + {\mu}\langle\solOper'\cOper_{\switch}' \calE_{T_n}'(\calE_{T_n}\cOper_{\switch}\solOper\optInpVarN- \hat{\outVar}_{T_n}),  v\rangle_{\U_n}.
     \end{align*}
     If $g_n\equiv 0$, the right-hand side of $\eqref{eq: state mpc subproblem gradient}$ vanishes and choosing $v= \optInpVarN \pm \big(\lambda (\optInpVarN-\inpVarDes)  +\bOper_{\switch}' \optAdState\big) $ leads to \eqref{eq: unconstrained opt condition}.
\end{proof}
\subsection{Guiding example (continued)}
\label{subsec:exampleContinued}
We continue with the example from \Cref{subsection:example}. To that end, we assume that the feasible control set is given as a non-empty set $\Uad \subseteq \U_n$ defined as
\begin{equation}\nonumber
    \Uad \coloneqq \big\{u\in \U_n \mid \inpVar_a(t) \leq u(t)\leq \inpVar_b(t) \in \R^{\inpVarDim} \text{ f.a.a }t\in \timeInt_n\big\}
\end{equation}
for certain bounds $\inpVar_a, \inpVar_b\in \U_n$. We are interested in tracking a target $\outVarDes\in L^\infty(\timeInt_n,\R^{\outVarDim})$ with the minimal possible control cost. Moreover, we want to enhance the sparsity of the control signal. Therefore, we consider the $L^1$-regularized optimization problem
\begin{align}
\label{eqn:optimization_example}
\begin{aligned}
    \min\limits_{\outVar\in L^2(\timeInt_n,\R^\outVarDim),\inpVar\in\U_n} \tfrac{1}{2} \int_{\timeInt_n} {\|{\outVar}(t)-\outVarDes(t)\|}^2_{\R^{\outVarDim}}+ \lambda\,{\|\inpVar(t)\|}_{\R^\inpVarDim}^2 \dt + \lonereg\,{\|\inpVar \|}_{L^1(\timeInt_n, \R^\inpVarDim)}
    \end{aligned}
\end{align}
subject to the box constraints $\inpVar\in \Uad$ and to $\outVar=\cOper\state$ and $(\state, \inpVar)$ solving \eqref{eq: parabolic_problem}. Here, $ \lambda, \lonereg>0$ are regularization parameters. To cast \eqref{eqn:optimization_example} in the general form of $J_n$ in \eqref{eqn:cost:fun}, we define~$g_n$ as
\begin{equation}\nonumber
    g_n(\inpVar) \coloneqq \lonereg \|\inpVar \|_{L^1(\timeInt_n,\R^\inpVarDim)} + \mathbbm{1}_{\Uad}(u),
\end{equation}
where the indicator function $\mathbbm{1}_{\Uad}$ is given as
\begin{equation}\nonumber
    \mathbbm{1}_{\Uad}(u) \coloneqq
        \begin{cases}
        0&\text{if }u\in \Uad,\\
        \infty&\text{otherwise}.
	\end{cases}
\end{equation}
It has been shown in \cite{azmi2023nonmonotone} that $g_n$ satisfies \Cref{ass:propertiesG} and therefore the optimality condition from \Cref{thm:FONC:MPC} is applicable.
%
\section{Certified approximations for model predictive control}
\label{sec:approximation}
For practical computations, it is necessary to approximate the switched abstract control system~\eqref{eqn:weakForm}. This work focuses on Galerkin approximations of~\eqref{eqn:weakForm} using a linear subspace $\Vred \subseteq V$. As a consequence, a crucial aspect becomes the estimation of the error between the \MPC feedback law and the resulting \MPC trajectory of~\eqref{eqn:MPCsubproblem} computed for the space $V$ and for the subspace $\Vred$. This section is devoted to providing such error estimates. 
The reduced switched system obtained via Galerkin projection is given by
\begin{equation}
	\label{eqn:weakForm:reduced}
    \left\{\quad
    \begin{aligned}
        m_{\switch}(\dot{\stateRed},\varphi) + a_{\switch}(\stateRed,\varphi) &= b_{\switch}(\inpVar,\varphi) && \text{for all $\varphi\in \Vred$, a.e. in $\timeInt_n$},\\
       \stateRed(t_n) &= \stateRed_n && \text{in }H, \\
       \outVarRed &= \cOper_{\switch}\stateRed && \text{a.e. in $\timeInt_n$}, 
    \end{aligned}\right.
\end{equation}
for $\intermediateInitStateRed\in \Vred$. We refer to~\eqref{eqn:weakForm:reduced} as the \ROM for~\eqref{eqn:weakForm}. Denoting the corresponding solution spaces as $\Xred_{n,\switch,i} \subseteq\X_{n,\switch,i}$, $\Xred_n \subseteq\X_n$, and $\Xred_{n,\switch} \subseteq\X_{n,\switch}$, and assuming that the original system~\eqref{eqn:weakForm} satisfies \Cref{ass:switchedSystem}, then the reduced system~\eqref{eqn:weakForm:reduced} also satisfies \Cref{ass:switchedSystem} by construction. In particular, we can apply \Cref{cor:wellPosednessForward} to introduce the reduced solution operator $\reduce{\calS}\colon H\times \U\to \Xred_n$ and the reduced cost function $\reduce{\Jhat}_n(\inpVar)\coloneqq J_n(\cOper_\switch\reduce{\calS}(\intermediateInitStateRed,\inpVar),\inpVar)$. The resulting finite-horizon \ROM-\MPC subproblem reads
\begin{equation}
	\label{eqn:MPCsubproblem:red}
	\min \reduce{\Jhat}_n(\inpVar)\quad\text{s.t.}\quad u\in \U_n.
\end{equation}
Using \Cref{thm:FONC:MPC}, we immediately conclude the existence of a unique optimal control $\optInpVarRedN \in \U_n$, satisfying the reduced optimality system.
\begin{corollary}[First-order optimality condition for \eqref{eqn:MPCsubproblem:red}]
    \label{cor:reducedMPCoptimalitySystem}
    Under \Cref{ass:switchedSystem,ass:propertiesG} the \ROM-\MPC subproblem \eqref{eqn:MPCsubproblem:red} admits a unique solution $\optInpVarRedN \in \U_n$ satisfying the following sufficient first-order optimality condition: $\optInpVarRedN$ is optimal for \eqref{eqn:MPCsubproblem:red} if and only if there exist the state $\optStateRed\in \Xred_n$ and adjoint state $\optAdStateRed \in \Xred_{n,\switch}$ solving the coupled system
 \begin{subequations}
       \label{eqn:MPCsubproblem:red:FONC}
        \begin{empheq}[left=\left\{,right=\right.]{align}
            \label{eq: reduced state mpc subproblem 1}
            m_{\switch}(\ddt\optStateRed,\varphi) + a_{\switch}(\optStateRed,\varphi) &= b_{\switch}(\optInpVarRedN,\varphi)&& \text{a.e. in } \timeInt_n,\\
            \label{eq: reduced state mpc subproblem 2}
            \optStateRed(t_n) &= \intermediateInitStateRed,&& \text{in $H$}\\ 
            \label{eq: reduced state mpc subproblem 3}
            \optOutVarRed &= \cOper_{\switch}\optStateRed&&\text{{a.e} in $\timeInt_n$,}\\
            \label{eq: reduced adjoint mpc subproblem 1}
            -m_{\switch}(\ddt\optAdStateRed, \varphi) + a_{\switch}(\varphi,\optAdStateRed) &= \langle \optOutVarRed - \outVarDes, \cOper_{\switch}\varphi\rangle_{\R^{\outVarDim}}&& \text{a.e.~in $\timeInt_n$,} \\
            \label{eq: reduced adjoint mpc subproblem 2}
            m_{\switch(T_n)}(\optAdStateRed(T_n),\varphi) &= \mu\langle \optOutVarRed(T_n) - \outVar_{T_n},\cOper_{\switch(T_n)} \varphi\rangle_{\R^{\outVarDim}}&&\\  
            \label{eq: reduced adjoint mpc subproblem 3}
             m_{\switch(t_{i-1})}(\varphi, \optAdStateRed(t_{i}^-))&=m_{\switch(t_{i})}(\varphi,\optAdStateRed(t_{i}^+))&& \text{for all $t_{n,i}\in\calT_n$},\\  
            \label{eq: reduced state mpc subproblem gradient}
            {\langle \lambda (\optInpVarRedN - \inpVarDes) + \bOper_{\switch}'\optAdStateRed, v - \optInpVarRedN\rangle}_{\U_n} &\geq g_n(\optInpVarRedN) - g_n(v), &&
        \end{empheq}
    \end{subequations}
    for all $\varphi\in \Vred$ and all $v\in \U_n$. The reduced adjoint solution operator is denoted as 
    \begin{equation}\nonumber
        \reduce{\solA}\colon C(\closure{\timeInt}_n, \R^\outVarDim)\to \reduce{\X}_{n,\switch}, \quad 
        \outVarRed \mapsto \reduce{\solA}(\outVarRed) = \adStateRed
    \end{equation} such that \eqref{eq: reduced adjoint mpc subproblem 1}-\eqref{eq: reduced adjoint mpc subproblem 3} can be written as  $\optAdStateRed=\reduce\solA(\optOutVarRed)$.
\end{corollary}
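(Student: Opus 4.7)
The strategy is to observe that the reduced system \eqref{eqn:weakForm:reduced} has exactly the structure of the original switched system \eqref{eqn:weakForm}, with the test/trial space $V$ replaced by the closed linear subspace $\Vred \subseteq V$. The plan is therefore to verify that the hypotheses of \Cref{thm:wellPosednessOCP,thm:FONC:MPC} and \Cref{cor:wellPosedAdjoint} remain in force on $\Vred$, and then to apply those results verbatim. This is precisely the remark already made by the authors ("the reduced system \eqref{eqn:weakForm:reduced} also satisfies \Cref{ass:switchedSystem} by construction"); the proposed proof makes that remark quantitative.

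For the verification, equip $\Vred$ with the norm inherited from $V$ and let $H_{\Vred}$ denote its closure in $H$, so that $(\Vred, H_{\Vred}, \Vred')$ forms a Gelfand triple in the sense of \Cref{subsec:MPC}. The restricted bilinear forms $a_i|_{\Vred\times\Vred}$, $m_i|_{\Vred\times\Vred}$, $b_i|_{\R^{\inpVarDim}\times\Vred}$ and the restricted output operators $\cOper_i|_{H_{\Vred}}$ inherit the uniform continuity, G\r{a}rding coercivity, symmetry and boundedness constants of \Cref{ass:switchedSystem} directly, because the defining inequalities are already valid on all of $V \supseteq \Vred$ and $H \supseteq H_{\Vred}$. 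In particular $m_i|_{\Vred\times\Vred}$ remains a symmetric, positive-definite inner product on $\Vred$, so the state-transition operators $\calM_j^{-1}\calM_i$ appearing in the adjoint are well defined on the reduced space, exactly as in the proof of \Cref{cor:wellPosedAdjoint}. Since $\U_n$ is identical for the \FOM and the \ROM, the regularizer $g_n$ still satisfies \Cref{ass:propertiesG}.

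Given these verifications, \Cref{thm:wellPosednessOCP} applied to the reduced problem \eqref{eqn:MPCsubproblem:red} yields a unique minimizer $\optInpVarRedN \in \U_n$, and \Cref{thm:FONC:MPC} applied to the same problem produces the optimality system \eqref{eqn:MPCsubproblem:red:FONC} with test functions ranging over $\Vred$. The well-posedness of the reduced adjoint equations \eqref{eq: reduced adjoint mpc subproblem 1}--\eqref{eq: reduced adjoint mpc subproblem 3} and the existence of the solution operator $\reduce{\solA}$ follow immediately from \Cref{cor:wellPosedAdjoint} applied to the Galerkin-restricted forms.

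The only subtlety worth highlighting—and the main place where one might worry—is the adjoint state-transition condition \eqref{eq: reduced adjoint mpc subproblem 3}: it must uniquely propagate $\optAdStateRed(t_{n,i}^+)$ to $\optAdStateRed(t_{n,i}^-)$ on the subspace. This, however, reduces to the fact that the restriction of $m_{\switch(t^+)}$ to $\Vred\times\Vred$ is still positive definite, which is inherited from its positive definiteness on $H$. Hence no new analytical ingredient beyond the \FOM results is required; the proof is a one-line reduction to \Cref{thm:FONC:MPC} applied on the subspace $\Vred$.
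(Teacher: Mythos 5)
Your proposal is correct and follows essentially the same route as the paper: the authors likewise note that the reduced system~\eqref{eqn:weakForm:reduced} satisfies \Cref{ass:switchedSystem} by construction and then invoke \Cref{thm:wellPosednessOCP,thm:FONC:MPC} (and \Cref{cor:wellPosedAdjoint}) verbatim with $V$ replaced by $\Vred$. Your explicit verification that the constants, the symmetry and positive definiteness of $m_i$, and the adjoint transition maps are all inherited on the subspace simply makes the paper's one-line argument quantitative.
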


First, in \Cref{subsec:preliminaries:error_est}, we define the energy norms and the corresponding spaces used for the error estimation. A-posteriori estimates for the optimal control under perturbed initial values are introduced in \Cref{subsec:optimalcontrolapost}. In \Cref{subsec:apostStateAdjoint}, we investigate residual-based a-posteriori error estimates for the state and the adjoint variable, which can be used to obtain a cheaply computable control error estimator. Further, we prove bounds for the optimal state. Eventually, we apply the results to the \MPC setting in \Cref{subsec:applicationToMPC}: we develop two certified \ROM-\MPC algorithms with a-posteriori bounds for the \ROM-\MPC feedback law and the \ROM-\MPC closed-loop trajectories. The main result is that the trajectory of both \ROM-\MPC algorithms stays in a neighborhood of the \FOM-\MPC trajectory, and the exactness of the \ROM can control the size of the neighborhood; see \Cref{cor:aposterioriBoundMPC}.

\begin{remark}
    As standard in the \MOR literature, we will introduce an intermediate space $\check{V}\subseteq V$ stemming from a high-dimensional finite element approximation. The corresponding model is called \emph{truth model} with the assumption that the error introduced by restricting to $\check{V}$ is negligible. In contrast, the space $\Vred\subseteq \check V$ will be derived using \MOR techniques as we discuss in \Cref{subsec:POD}. We do not distinguish the spaces $\check{V}$ and $V$ for notational convenience.
\end{remark}

\subsection{Preliminaries}\label{subsec:preliminaries:error_est}
For the error estimation, we additionally assume that the bilinear forms $a_i$ induce inner products in $V$.
\begin{assumption}
    \label{ass:switchedSystem:A:coerciv}
    The bilinear forms $a_i\colon V\times V$ are uniformly coercive, i.e., $\coerAH = 0$ in \eqref{eqn:properties:a}.
\end{assumption}
Note that this is done without loss of generality, by the standard transformation given in \cite[Cha.~III, Sec.~1.2]{lions1971optimal} or \cite[Sec.~2.3]{DieG23}.
Next, let us introduce suitable norms to measure the error.
\begin{definition}[Energy norms]\label{def:energy:norm}
   For $\mathfrak{a}\in\{a_i,m_i \mid i = 1, \ldots, \nrModes\}$ and $\phi,\varphi\in W\in\{ V,H\}$, let us introduce:
   \begin{enumerate}
       \item the energy product $\langle\varphi,\phi\rangle_{\mathfrak{a}}\vcentcolon= \tfrac{1}{2}\mathfrak{a}(\varphi,\phi)+\tfrac{1}{2}\mathfrak{a}(\phi,\varphi)$ with $W=V$ if $\mathfrak{a} = a_i$ or $W=H$ if $\mathfrak{a}= m_i$, for any $i\in \{1,\ldots,\nrModes\}$;
       \item the energy norm $\|\cdot\|^2_{\mathfrak{a}}\vcentcolon= \langle\cdot\,,\cdot\rangle_{\mathfrak{a}}=\mathfrak{a}(\cdot,\cdot)$.
   \end{enumerate}
   We write $W_{\mathfrak{a}}$ if the space $W$ is  equipped with the norm $\|\cdot\|_{\mathfrak{a}}$. Similarly, for positive constants $\omega_0,\ldots,\omega_{\nrSwitches-1}$, $t\in\timeInt_n$, and $\mathfrak{a}_{\switch}\in\{a_{\switch},m_{\switch}\}$ for some $\switch \in \addSwitching$, we define the time-dependent energy norm $\|v\|_{\mathfrak{a}_{\switch}, \omega,t}$
    for $v\in L^2((t_n,t), W)$ as
    \begin{equation}\nonumber
        \|v\|_{\mathfrak{a}_{\switch}, \omega,t}^2 = 
        \sum\limits_{i=0}^{N_t-1}\omega_i\int_{t_{n,i}}^{\min\{t,t_{n,i+1}\}}\|v(s)\|_{\mathfrak{a}_{\switch(s)}}^2 \ds,
    \end{equation}
    where $N_t\in\{0,\ldots,N-1\}$ is the smallest integer with $t\in [t_{n,N_{t}},t_{n,{N_t}+1})$. The corresponding space is denoted with $L^2((t_n,t), W_{\mathfrak{a}_{\switch}})$. The corresponding dual norm and space are defined for $v\in L^2((t_n,t), W')$ as
    \begin{equation}\label{eqn:dualsiwtchnorm}
        {\|v\|}_{\mathfrak{a}_\switch', \omega,t}^2\vcentcolon= 
        \sum\limits_{i=0}^{N_t-1}\omega_i\int_{t_{n,i}}^{\min\{t,t_{n,i+1}\}}\|v(s)\|^2_{\mathfrak{a}_{\switch(s)}'} \ds \qquad \text{and} \qquad L^2((t_n,t), W'_{\mathfrak{a}_{\switch}}),
    \end{equation}
    respectively.
    If $t=T_n$ we simply write $ \|v\|_{\mathfrak{a}_\switch,\omega}^2$ and  $\|v\|_{\mathfrak{a}_\switch',\omega}^2$. If also $\omega_i = 1$ for all $i$, then we write $ \|v\|_{\mathfrak{a}_\switch}^2$ and  $\|v\|_{\mathfrak{a}_\switch'}^2$. 
\end{definition}
By construction, we have $\|\varphi\|_{m_i}^2 \leq \contMi{i}\,\|\varphi\|_H^2 \leq\smash{\tfrac{\contMi{i}}{\coerMi{j}}}\|\varphi\|_{m_j}^2$ 
and thus we can define the equivalence constants for all $i,j\in \{1,\ldots,\nrModes\}$ as
\begin{align}
    \label{eqn:def:cij}
    c_{i,j} \vcentcolon= \begin{cases}
            \tfrac{\contMi{i}}{\coerMi{j}} &\quad\text{if }i\neq j,\\
            1&\quad\text{if }i=j.
        \end{cases}
\end{align}
Due to the equivalence of norms we use, the linear operators $\bOper_{\switch}':\U_n\to L^2(\timeInt_n, V_{a_\switch}) $ and $\cOper_{\switch}:{L^2(\timeInt_n,\R^{\outVarDim})} \to {L^2(\timeInt_n, H_{m_\switch}})$ are uniformly bounded, i.e., there exist constants $\gamma_{\bOper_{\switch},{a}_{\switch}}>0$, $\gamma_{\cOper_{\switch},{m}_{\switch}}>0$ such that
    \begin{equation}
        \label{eqn:properties:c:en:norm2}
        {\|\bOper_{\switch}'\varphi\|}_{\U_n}\; \leq\; \gamma_{\bOper_{\switch},{a}_{\switch}}{\|\varphi\|}_{{a}_{\switch}},\quad{\|\cOper_{\switch}\phi\|}_{L^2(\timeInt_n,\R^{\outVarDim})}\; \leq\; \gamma_{\cOper_{\switch},{m}_{\switch}} {\|\phi\|}_{{m}_{\switch}},
    \end{equation}
for all $\varphi\in L^2(\timeInt_n,V)$ and $\phi\in L^2(\timeInt_n,H)$. Moreover, for $\mathfrak{a}\in\{m_1,\ldots,m_{\nrSwitches}\}$ the operators $\cOper_i$ are also uniformly bounded, i.e.,
    \begin{equation}
            \label{eqn:properties:c:enrgy:norm}
            {\|\cOper_i\phi\|}_{\R^{\outVarDim}} \;\leq\; \gamma_{\cOper_i,\mathfrak{a}}\,{\|\phi\|}_{\mathfrak{a}},
    \end{equation}
for all $\phi\in H$ and all $i\in \{1,\ldots,\nrModes\}$.
\subsection{A-posteriori error estimates for the optimal control under perturbed initial values}
\label{subsec:optimalcontrolapost}
In this section, we want to obtain error bounds for the optimal controls $\optInpVarRedN, \optInpVarN$. In the analysis, we take into account that for the computation of the control $\optInpVarRedN$ only an approximation $\intermediateInitState\in H$ of the true state $\state_n$ might be available, which we assume to satisfy the error bound
\begin{equation}\label{eqn:stateInitEst}
    \|\state_n-\intermediateInitState\|_{m_{\switch(t_n)}}\leq \initBound. 
\end{equation}
In our case, $\initBound$ is determined by the error of earlier \MPC iterations. In other applications, $\initBound$ may arise from differences between measured and simulated initial values at $t_n$. Now we can specify the \ROM initial value $\stateRed_n= \projVred\intermediateInitState\in\Vred$ with orthogonal projection $\projVred\colon H \to \Vred\subseteq H$. 
\begin{remark}
    We emphasize that we do not require the reduced initial value to be the projection of the full initial value, i.e., we allow $\stateRed_n \neq \projVred \state_n$, since in general only $\intermediateState_n\neq \state_n$ will be known during the reduced \MPC iterations.        
\end{remark}
To derive a-posteriori error estimates for the optimal control, we introduce the intermediate state $\intermediateState=\solper(\intermediateInitState,\optInpVarRedN) \in \X_n$  and the intermediate output $\intermediateOutVar=\cOper_\switch\intermediateState\in C(\closure{\timeInt}_n,\R^p)$, which solves the original switched system \eqref{eqn:weakForm} with the reduced optimal control $\optInpVarRedN$ as input and perturbed initial value $\intermediateInitState\in H$.
Further, we introduce the intermediate adjoint states $\intermediateAdStateA=\solA(\intermediateOutVar)\in \X_{n,\sigma}$ and $\intermediateAdStateB=\calA(\optOutVarRed)\in \X_{n,\sigma}$ being the solutions of the adjoint equation \eqref{eqn:AdjointSystem} with input $\intermediateOutVar$ and $\optOutVarRed$ respectively.
An overview of all relevant variables and constants and their definition is given in \Cref{tab:overviewVars}. Let us start with quantifying the distance of the optimal adjoint $\optAdState$ from $\intermediateAdStateA$ and $\intermediateAdStateB$. 
\begin{table}[t]
    \centering
    \begin{tabular}{ll}
        \toprule
        \textbf{name} & \textbf{description}\\\midrule
        $\state_n$, $\intermediateInitState$, $\stateRed_n$ & \FOM initial value, intermediate initial value satisfying \eqref{eqn:stateInitEst} and \ROM initial\\
        & value satisfying $\stateRed_n= \projVred\intermediateInitState$\\
        $\optInpVarN,\optState,\optAdState,\optOutVar$ & \FOM optimal control, state, adjoint state, and output solving~\eqref{eqn:MPCsubproblem:FONC}, i.e.,\\ & $\optState=\solper(\state_n,\optInpVarN),\,\optOutVar=\cOper_\switch\optState,\, \optAdState=\solA(\optOutVar)$ \\
        $\optInpVarRedN,\optStateRed,\optAdStateRed,\optOutVarRed$ & \ROM optimal control, state, adjoint state, and output solving~\eqref{eqn:MPCsubproblem:red:FONC}, i.e.,  \\
        & $\optStateRed=\reduce \solper(\stateRed_n,\optInpVarRedN),\, \optOutVarRed=\cOper_\switch\optStateRed,\,{\optAdStateRed}=\reduce \solA(\optOutVarRed)$ \\
        $\intermediateState,\intermediateOutVar$ & \FOM state and output solving \eqref{eqn:weakForm} with input $\optInpVarRedN$ and initial value $\intermediateInitState$, i.e.,\\
        & $\intermediateState=\solper(\intermediateInitState,\optInpVarRedN)$, $\intermediateOutVar=\cOper_\switch\intermediateState$\\
        $\intermediateAdStateA$, $\intermediateAdStateB$ & \FOM adjoint states solving \eqref{eqn:AdjointSystem} with input $\intermediateOutVar$ or $\optOutVarRed$, i.e.,
        $\intermediateAdStateA=\solA(\intermediateOutVar)$, $\intermediateAdStateB=\solA(\optOutVarRed)$\\\bottomrule
    \end{tabular}
    \caption{Overview of the variables used in the certification framework}
    \label{tab:overviewVars}
\end{table}
\begin{lemma}\label{lem:relationIntermeadiateAdjoints}
Let \Cref{ass:switchedSystem,ass:propertiesG,ass:switchedSystem:A:coerciv} be valid and define
 \begin{equation}\label{eqn:err:adj:A:B}
     e_{\intermediateAdStateA} \vcentcolon= \intermediateAdStateA -\optAdState \in \X_{n,\switch},\quad e_{\intermediateAdStateB} \vcentcolon= \intermediateAdStateB -\optAdState\in\X_{n,\switch}.
 \end{equation}
 Then
\begin{subequations}
    \begin{align}
        \|e_{\intermediateAdStateA} (t_n)\|^2_{m_{\switch(t_n)}} &
        \leq \tfrac{\gamma^2_{\cOper_{\switch},{a}_{\switch}}}{2} \|\intermediateOutVar - \optOutVar \|^2_{L^2} + \mu^2 \gamma^2_{\cOper_{\sigma(T_n)},m_{\switch(T_n)}}\| \intermediateOutVar(T_n) - \optOutVar(T_n)\|^2_{\R^{\outVarDim}},
        \label{eqn:opt:adj:adj:hat}\\
        \|e_{\intermediateAdStateB}(t_n)\|^2_{m_{\switch(t_n)}} &
        \leq \tfrac{\gamma^2_{\cOper_{\switch},{a}_{\switch}}}{2} \|\optOutVarRed - \optOutVar\|^2_{L^2}
        +\mu^2  \gamma^2_{\cOper_{\sigma(T_n)},m_{\switch(T_n)}}\| \optOutVarRed(T_n) - \optOutVar(T_n)\|^2_{\R^{\outVarDim}},
        \label{eqn:opt:adj:adj:che}
  \end{align}
  \ReA{for $\gamma_{\cOper_{\switch},{a}_{\switch}}$, $\gamma_{\cOper_{\sigma(T_n)},m_{\switch(T_n)}}$ defined in \eqref{eqn:properties:c:en:norm2} and \eqref{eqn:properties:c:enrgy:norm}, respectively}.
\end{subequations}
\end{lemma}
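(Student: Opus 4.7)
The strategy is a standard backward energy estimate for the adjoint equation satisfied by the errors. Since the two bounds have the same structure, I focus on \eqref{eqn:opt:adj:adj:hat}; the bound \eqref{eqn:opt:adj:adj:che} then follows by replacing $\intermediateOutVar$ with $\optOutVarRed$ throughout. By the linearity of the adjoint solution map $\solA$, the errors $e_{\intermediateAdStateA} = \solA(\intermediateOutVar) - \solA(\optOutVar)$ and $e_{\intermediateAdStateB} = \solA(\optOutVarRed) - \solA(\optOutVar)$ solve the adjoint system \eqref{eqn:AdjointSystem} in $\X_{n,\switch}$ with right-hand side driven by $\intermediateOutVar - \optOutVar$ and $\optOutVarRed - \optOutVar$, respectively, and with the corresponding terminal datum coming from \eqref{eq: adjoint mpc subproblem 2 introduced}.

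On every subinterval $(t_{n,i}, t_{n,i+1})$ the switching signal is constant with value $\switch_i$, so I test the adjoint equation for $e_{\intermediateAdStateA}$ with $\varphi = e_{\intermediateAdStateA}(t)$. Using the symmetry of $m_{\switch_i}$ this yields the pointwise identity $-\tfrac12 \ddt \|e_{\intermediateAdStateA}\|^2_{m_{\switch_i}} + \|e_{\intermediateAdStateA}\|^2_{a_{\switch_i}} = \langle \intermediateOutVar - \optOutVar, \cOper_{\switch_i} e_{\intermediateAdStateA}\rangle_{\R^{\outVarDim}}$. Integrating from $t_{n,i}$ to $t_{n,i+1}$, bounding the right-hand side by $\|\cOper_{\switch_i}\varphi\|_{\R^{\outVarDim}} \le \gamma_{\cOper_\switch,a_\switch}\|\varphi\|_{a_{\switch_i}}$, and applying Young's inequality of the form $\alpha\beta \le \tfrac{\gamma^2}{2}\alpha^2 + \tfrac{1}{2\gamma^2}\beta^2$ with weight absorbing the full $\|e_{\intermediateAdStateA}\|^2_{a_{\switch_i}}$ contribution, I obtain on each piece
\begin{equation*}
    \|e_{\intermediateAdStateA}(t_{n,i}^+)\|^2_{m_{\switch_i}} \;\le\; \|e_{\intermediateAdStateA}(t_{n,i+1}^-)\|^2_{m_{\switch_i}} + \gamma_{\cOper_\switch,a_\switch}^2 \int_{t_{n,i}}^{t_{n,i+1}} \|\intermediateOutVar - \optOutVar\|^2_{\R^{\outVarDim}}\dt.
\end{equation*}

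Iterating backward across the $N-1$ switching points, the jump terms $\|e_{\intermediateAdStateA}(t_{n,i}^+)\|^2_{m_{\switch_i}} - \|e_{\intermediateAdStateA}(t_{n,i}^-)\|^2_{m_{\switch_{i-1}}}$ are controlled through the transmission condition \eqref{eq: adjoint mpc subproblem 3 introduced}: testing it with $\varphi = e_{\intermediateAdStateA}(t_{n,i}^+)$ together with the Cauchy--Schwarz inequality gives the desired comparison of the two one-sided energies. Summing the $N$ piecewise inequalities telescopes the mass-norm contributions down to $\tfrac12\|e_{\intermediateAdStateA}(t_n)\|^2_{m_{\switch(t_n)}} - \tfrac12\|e_{\intermediateAdStateA}(T_n)\|^2_{m_{\switch(T_n)}}$, since $t_n$ and $T_n$ are not switching points.

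Finally, the terminal datum is handled separately by choosing $\varphi = e_{\intermediateAdStateA}(T_n)$ in \eqref{eq: adjoint mpc subproblem 2 introduced} and using the bound $\|\cOper_{\switch(T_n)}\varphi\|_{\R^{\outVarDim}} \le \gamma_{\cOper_{\switch(T_n)},m_{\switch(T_n)}} \|\varphi\|_{m_{\switch(T_n)}}$, so that dividing out one factor of $\|e_{\intermediateAdStateA}(T_n)\|_{m_{\switch(T_n)}}$ yields $\|e_{\intermediateAdStateA}(T_n)\|^2_{m_{\switch(T_n)}} \le \mu^2 \gamma_{\cOper_{\switch(T_n)},m_{\switch(T_n)}}^2 \|\intermediateOutVar(T_n)-\optOutVar(T_n)\|^2_{\R^{\outVarDim}}$. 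Substituting this terminal bound and combining it with the telescoped estimate produces \eqref{eqn:opt:adj:adj:hat}. The principal obstacle is the careful bookkeeping of the adjoint jumps at $t_{n,i}\in\calT_n$: one must exploit the symmetric structure of the $m_i$ and the isomorphism $\calM_{\switch_{i-1}}^{-1}\calM_{\switch_i}$ implicit in the transmission to ensure that the telescoping produces no spurious amplification beyond what is already absorbed in $\gamma_{\cOper_\switch,a_\switch}$.
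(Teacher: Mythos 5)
Your overall architecture (error system for $e_{\intermediateAdStateA}$, energy testing with $\varphi=e_{\intermediateAdStateA}(t)$, Young's inequality to absorb the $a_{\switch}$-term, separate treatment of the terminal condition) matches the paper's, but you diverge at the one point that actually matters, and there your argument has a genuine gap. The paper does \emph{not} estimate interval by interval: it observes that the transmission condition \eqref{eq: adjoint mpc subproblem 3 introduced} makes $t\mapsto m_{\switch(t)}(e_{\intermediateAdStateA}(t),\cdot)\in V'$ continuous on $\closure{\timeInt}_n$, so that the single global identity for $\int_{t_n}^{T_n} m_{\switch}(\ddt e_{\intermediateAdStateA},e_{\intermediateAdStateA})\dt$ produces only the two boundary terms $\tfrac12\|e_{\intermediateAdStateA}(T_n)\|^2_{m_{\switch(T_n)}}-\tfrac12\|e_{\intermediateAdStateA}(t_n)\|^2_{m_{\switch(t_n)}}$ and no interfacial contributions; the switching times never enter the estimate. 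You instead integrate on each $(t_{n,i},t_{n,i+1})$ and then pass the energy across each switch using \eqref{eq: adjoint mpc subproblem 3 introduced} with $\varphi=e_{\intermediateAdStateA}(t_{n,i}^+)$ and Cauchy--Schwarz. That step is not cost-free: it yields $\|e_{\intermediateAdStateA}(t_{n,i}^+)\|^2_{m_{\switch(t_{n,i}^+)}}=m_{\switch(t_{n,i}^-)}(e_{\intermediateAdStateA}(t_{n,i}^+),e_{\intermediateAdStateA}(t_{n,i}^-))$, and closing it requires comparing $\|\cdot\|_{m_{\switch(t_{n,i}^+)}}$ with $\|\cdot\|_{m_{\switch(t_{n,i}^-)}}$, which costs a norm-equivalence constant $c_{i,j}=\contMi{i}/\coerMi{j}\geq 1$ from \eqref{eqn:def:cij} at every switch. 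Chaining over the $\nrSwitches-1$ switches therefore introduces a multiplicative factor $\prod_k c_{\switch(t_{n,k}^+),\switch(t_{n,k}^-)}$ in front of both terms of \eqref{eqn:opt:adj:adj:hat} --- exactly the weights $\tilde\omega$ that the paper cannot avoid in \Cref{thm:errorBoundAdjointEnergy}, and which are conspicuously absent from the statement you are proving. Your last sentence names this as ``the principal obstacle'' but then simply asserts that no amplification occurs; as written that assertion is unsupported, and your route proves only the weaker inequality with the accumulated constants unless all the $m_i$ coincide.

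A second, minor issue: with Young's inequality in the form $\alpha\beta\le\tfrac{\gamma^2}{2}\alpha^2+\tfrac{1}{2\gamma^2}\beta^2$ you absorb only half of $\|e_{\intermediateAdStateA}\|^2_{a_{\switch}}$ and end up with the constant $\gamma^2_{\cOper_{\switch},a_{\switch}}$ in front of the $L^2$-term (your own displayed piecewise inequality shows this), i.e., twice the constant claimed in \eqref{eqn:opt:adj:adj:hat}. The paper takes the weight $\varepsilon_1=\gamma^2_{\cOper_{\switch},a_{\switch}}/2$ in $\alpha\beta\le\tfrac12(\varepsilon_1\alpha^2+\varepsilon_1^{-1}\beta^2)$, which absorbs the full $a_{\switch}$-term and recovers $\tfrac{\gamma^2_{\cOper_{\switch},a_{\switch}}}{2}$. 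This is easily repaired, unlike the first issue.
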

\begin{proof}
    We show the proof for \eqref{eqn:opt:adj:adj:hat} since \eqref{eqn:opt:adj:adj:che} follows by repeating the same arguments. Due to linearity, $e_{\intermediateAdStateA}$ satisfies 
    \begin{subequations}
	\begin{empheq}[left=\left\{\quad,right=\right.]{align}
		- m_{\switch(t)}(\ddt e_{\intermediateAdStateA}(t),\varphi) + a_{\switch(t)}(\varphi,e_{\intermediateAdStateA}(t)) &= \langle \intermediateOutVar(t) - \optOutVar(t),\cOper_{\switch(t)}\varphi\rangle_{\R^{\outVarDim}}\label{eqn:err:adj:hat:a}\\
		m_{\switch(T_n)}(e_{\intermediateAdStateA}(T_n),\varphi) &= \mu\,\langle \intermediateOutVar(T_n) - \optOutVar(T_n),\cOper_{\switch(T_n)}\varphi\rangle_{\R^{\outVarDim}}\label{eqn:err:adj:hat:b}\\
        m_{\switch(t^{-}_{n,i})}(\varphi,e_{\intermediateAdStateA}(t_{n,i}^-))&=m_{\switch(t^+_{n,i})}(\varphi,e_{\intermediateAdStateA}(t_{n,i}^+))\label{eqn:err:adj:hat:c}
	\end{empheq}
    \end{subequations}
    for all $\varphi \in V$. Due to \eqref{eqn:err:adj:hat:c}, the map {$(\timeInt_n\ni t\mapsto m_{\switch(t)}(e_{\intermediateAdStateA}(t),\cdot)\in V')$} is continuous on $\closure \timeInt_n$ and $e_{\intermediateAdStateA}$ admits a weak derivative w.r.t. the time-dependent inner product $m_\switch$. Therefore we have
    \begin{align*}
       \int_{t_n}^{T_n} m_{\switch(t)}(\ddt e_{\intermediateAdStateA}(t),e_{\intermediateAdStateA}(t)) {\dt}&=\tfrac{1}{2} \int_{t_n}^{T_n}\ddt m_{\switch(t)}(e_{\intermediateAdStateA}(t),e_{\intermediateAdStateA}(t)) \dt\\
       &={\tfrac{1}{2}\|e_{\intermediateAdStateA}(t_n)\|^2_{m_{\switch(t_n)}}-\tfrac{1}{2}\|e_{\intermediateAdStateA}(T_n)\|_{m_{\switch(T_n)}}^2}.
    \end{align*}
    Now, {choosing $\varphi=e_{\intermediateAdStateA}(t)$ in} \eqref{eqn:err:adj:hat:a}, and integrating over time, we get
 \begin{align}\label{eqn:adj:hat:nor}
    \begin{aligned}
		\tfrac{1}{2}\|e_{\intermediateAdStateA}(t_n)\|^2_{m_{\switch(t_n)}}+\|e_{\intermediateAdStateA}\|^2_{a_{\switch}} 
        = \int_{t_n}^{T_n} \langle \cOper_{\switch(t)}'(\intermediateOutVar(t) - \optOutVar(t)), e_{\intermediateAdStateA}(t)\rangle_{V',V} \dt
        + \tfrac{1}{2} \|e_{\intermediateAdStateA}(T_n)\|_{m_{\switch(T_n)}}^2.
  \end{aligned}
	\end{align}
Using Cauchy-Schwarz, the weighted Young inequality with $\varepsilon_1=\nicefrac{\gamma^2_{\cOper_{\switch},\mathfrak{a}_{\switch}}}{2}$, and \eqref{eqn:properties:c:en:norm2}
 \begin{align}\label{eqn:adj:hat:t}
    \begin{aligned}
		\int_{t_n}^{T_n}& \langle \cOper_{\switch(t)}'(\intermediateOutVar(t) - \optOutVar(t)), e_{\intermediateAdStateA}(t)\rangle_{V',V} \dt\;\le\;\int_{t_n}^{T_n}\|\intermediateOutVar(t) - \optOutVar(t)\|_{\R^{\outVarDim}}\|\cOper_{\switch(t)} e_{\intermediateAdStateA(t)}\|_{\R^{\outVarDim}}\dt\\
  \le&\;\tfrac{1}{2}\left(\varepsilon_1\|\intermediateOutVar - \optOutVar\|^2_{L^2}+\frac{1}{\varepsilon_1}\|\cOper_{\switch}e_{\intermediateAdStateA}\|^2_{L^2}\right)
  \leq \;\tfrac{1}{2}\left(\tfrac{\gamma^2_{\cOper_{\switch},{a}_{\switch}}}{2}\|\intermediateOutVar - \optOutVar\|^2_{L^2}+2\|e_{\intermediateAdStateA}\|^2_{a_{\switch}}\right).
  \end{aligned}
	\end{align}
 Similarly, the terminal condition \eqref{eqn:err:adj:hat:b} gives with Young's inequality for $\varepsilon_2=\gamma^2_{\cOper_{\sigma(T_n)},m_{\switch(T_n)}}$
    \begin{align}\label{eqn:adj:fin:t}
      \begin{aligned}
        \|e_{\intermediateAdStateA}(T_n)\|^2_{m_{\switch(T_n)}} \;\leq&\;  \mu \| \intermediateOutVar(T_n) - \optOutVar(T_n)\|_{\R^{\outVarDim}}\|\cOper_{\switch(T_n)}e_{\intermediateAdStateA}(T_n)\|_{\R^{\outVarDim}}\\
        \leq&\;\tfrac{1}{2}\left(\mu^2 \varepsilon_2\| \intermediateOutVar(T_n) - \optOutVar(T_n)\|^2_{\R^{\outVarDim}}+\frac{1}{\varepsilon_2}\|\cOper_{\switch(T_n)}e_{\intermediateAdStateA}(T_n)\|_{\R^{\outVarDim}}\right)\\
        \leq&\;\tfrac{1}{2}\left( \mu^2\gamma^2_{\cOper_{\sigma(T_n)},m_{\switch(T_n)}}\| \intermediateOutVar(T_n) - \optOutVar(T_n)\|^2_{\R^{\outVarDim}}+\|e_{\intermediateAdStateA}(T_n)\|^2_{m_{\switch(T_n)}}\right),
     \end{aligned}
    \end{align}
    where we used \eqref{eqn:properties:c:enrgy:norm}. Plugging \eqref{eqn:adj:hat:t} and \eqref{eqn:adj:fin:t} inside \eqref{eqn:adj:hat:nor} gives \eqref{eqn:opt:adj:adj:hat}.
\end{proof}
With these preparations, we can state a bound for the error in the optimal control.
\begin{theorem}[A-posteriori estimate for the optimal control]
	\label{thm:apostErrorControl}
   Let \Cref{ass:switchedSystem,ass:propertiesG,ass:switchedSystem:A:coerciv} be satisfied and let $\optInpVarN \in \U_n$ and $\optInpVarRedN \in \U_n$ be the unique solutions of the \FOM-\MPC problem \eqref{eqn:MPCsubproblem} and the \ROM-\MPC problem \eqref{eqn:MPCsubproblem:red}, respectively. {Further, let $\initBound$ be defined in \eqref{eqn:stateInitEst}}. Then
   \begin{equation}
        \label{eq: a post estimate general all adjoint}
        \|\optInpVarRedN - \optInpVarN\|^2_{\U_n}
    	\leq \min\{ \BoundControlA^2(\optInpVarRedN, \initBound), \BoundControlB^2(\optInpVarRedN, \initBound)\}
    \end{equation}
     with
     \begin{align}
        \BoundControlA^2(\optInpVarRedN, \initBound) &\vcentcolon= \tfrac{1}{\lambda^2}\|\mathcal B_{\sigma}'(\optAdStateRed- \intermediateAdStateA)\|^2_{\U_n} + \tfrac{C_1}{2\lambda}\initBound^2,\label{control:bound:A}\\
        \BoundControlB^2(\optInpVarRedN, \initBound) &\vcentcolon= \tfrac{1}{\lambda^2} \|\bOper_{\switch}' (\optAdStateRed-\intermediateAdStateB)\|_{\U_n}^2  + \tfrac{1}{\lambda}\|\optOutVarRed - \intermediateOutVar\|_{L^2}^2+ \tfrac{\mu}{\lambda}\|\optOutVarRed(T_n)-\intermediateOutVar(T_n)\|_{\R^{\outVarDim}}^2+\tfrac{C_1}{\lambda}\initBound^2,\label{control:bound:B}
     \end{align}
     for $C_1\vcentcolon=\max\{\nicefrac{\gamma^2_{\cOper_{\switch},{a}_{\switch}}}{2},\mu \gamma^2_{\cOper_{\sigma(T_n)},m_{\switch(T_n)}}\}$ and $\intermediateOutVar$, $\intermediateAdStateA$, $\intermediateAdStateB$ as in \Cref{tab:overviewVars}.
\end{theorem}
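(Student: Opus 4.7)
The strategy is a perturbation/duality argument in the spirit of \cite{DBLP:journals/coap/TroltzschV09,AliH18}, adapted to the switched setting and to the perturbed initial datum quantified by $\initBound$; the two estimates $\BoundControlA^{2}$ and $\BoundControlB^{2}$ follow from the same recipe, differing only in whether $\intermediateAdStateA$ or $\intermediateAdStateB$ is inserted as intermediate adjoint. I first test \eqref{eq: state mpc subproblem gradient} with $v=\optInpVarRedN$ and \eqref{eq: reduced state mpc subproblem gradient} with $v=\optInpVarN$ and add the two inequalities; the convex terms $g_n(\optInpVarN),g_n(\optInpVarRedN)$ cancel, leaving
\[
\lambda\|\optInpVarRedN-\optInpVarN\|_{\U_n}^{2}\le\langle\bOper_{\switch}'(\optAdState-\optAdStateRed),\optInpVarRedN-\optInpVarN\rangle_{\U_n}.
\]
Splitting $\optAdState-\optAdStateRed=(\optAdState-\intermediateAdStateA)+(\intermediateAdStateA-\optAdStateRed)$ (and analogously via $\intermediateAdStateB$ for the second bound), I handle the computable piece $\langle\bOper_{\switch}'(\intermediateAdStateA-\optAdStateRed),\cdot\rangle$ by Cauchy--Schwarz and Young's inequality with parameter $\lambda$: this absorbs $\tfrac{\lambda}{2}\|\optInpVarRedN-\optInpVarN\|_{\U_n}^{2}$ on the left and produces the residual $\tfrac{1}{2\lambda}\|\bOper_{\switch}'(\intermediateAdStateA-\optAdStateRed)\|_{\U_n}^{2}$ on the right, which after the eventual scaling yields the $\tfrac{1}{\lambda^{2}}\|\bOper_{\switch}'(\cdot)\|^{2}$ contribution common to both bounds.

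The core of the proof is a state--adjoint duality argument for the remaining term $\langle\bOper_{\switch}'(\optAdState-\intermediateAdStateA),\optInpVarRedN-\optInpVarN\rangle_{\U_n}$. By linearity of $\solA$ the difference $\optAdState-\intermediateAdStateA$ solves \eqref{eqn:AdjointSystem} with source $\optOutVar-\intermediateOutVar$ (and $\optOutVar-\optOutVarRed$ when $\intermediateAdStateB$ is used), while $\intermediateState-\optState$ solves the switched state equation driven by $\optInpVarRedN-\optInpVarN$ with initial defect $\intermediateInitState-\state_n$. Testing the adjoint with $\intermediateState-\optState$ and the state with $\optAdState-\intermediateAdStateA$, and then performing piecewise integration by parts on each smooth sub-interval $(t_{n,i},t_{n,i+1})$, the interior boundary contributions at $t_{n,i}\in\calT_n$ cancel because $\intermediateState-\optState$ is continuous in $H$ while the adjoint transmission condition \eqref{eq: adjoint mpc subproblem 3 introduced} matches the $m_{\switch(t_{n,i}^{\pm})}$-pairings. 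For the $\intermediateAdStateA$-case this yields the identity
\[
\langle\bOper_{\switch}'(\optAdState-\intermediateAdStateA),\optInpVarRedN-\optInpVarN\rangle_{\U_n}=-\|\intermediateOutVar-\optOutVar\|_{L^{2}}^{2}-\mu\|\intermediateOutVar(T_n)-\optOutVar(T_n)\|_{\R^{\outVarDim}}^{2}+m_{\switch(t_n)}\bigl(e_{\intermediateAdStateA}(t_n),\intermediateInitState-\state_n\bigr);
\]
for the $\intermediateAdStateB$-case the analogous cross term $\langle\optOutVar-\optOutVarRed,\intermediateOutVar-\optOutVar\rangle$ (and its endpoint counterpart) is split via $\optOutVar-\optOutVarRed=(\optOutVar-\intermediateOutVar)+(\intermediateOutVar-\optOutVarRed)$ and controlled by a symmetric Young inequality, producing the additional positive pieces $\tfrac{1}{2}\|\optOutVarRed-\intermediateOutVar\|^{2}$ and $\tfrac{\mu}{2}\|\optOutVarRed(T_n)-\intermediateOutVar(T_n)\|^{2}$ that appear in $\BoundControlB^{2}$.

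To close the argument the remaining $m_{\switch(t_n)}$-pairing is handled via Cauchy--Schwarz, the initial-value bound \eqref{eqn:stateInitEst}, and a weighted Young inequality with parameter $\varepsilon$, giving $\tfrac{\varepsilon}{2}\|e_{\intermediateAdStateA}(t_n)\|_{m_{\switch(t_n)}}^{2}+\tfrac{1}{2\varepsilon}\initBound^{2}$, into which I insert \eqref{eqn:opt:adj:adj:hat} (respectively \eqref{eqn:opt:adj:adj:che} for the $\intermediateAdStateB$-case). This expresses $\|e_{\intermediateAdStateA}(t_n)\|^{2}$ in terms of the very output-error norms that appear with negative sign in the identity above; choosing $\varepsilon=2/C_{1}$ in case A and $\varepsilon=1/(2C_{1})$ in case B produces perfect cancellation of the non-computable $\intermediateOutVar-\optOutVar$ contributions, so that what survives is exactly the right-hand side of \eqref{control:bound:A} (respectively \eqref{control:bound:B}) after division by $\lambda/2$, and \eqref{eq: a post estimate general all adjoint} then follows by taking the minimum. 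The main obstacle is the piecewise integration by parts on the switched adjoint: because $\optAdState,\intermediateAdStateA\in\X_{n,\switch}$ satisfy $m_{\switch}$-weighted jump conditions at each $t_{n,i}$ rather than being continuous in $H$, the cancellation of interior boundary terms must be effected mode-by-mode via \eqref{eq: adjoint mpc subproblem 3 introduced}; a secondary delicacy is the tuning of $\varepsilon$ through $C_{1}$, which is the unique scaling that makes the positive multiples produced by \Cref{lem:relationIntermeadiateAdjoints} exactly offset the negative duality-identity contributions.
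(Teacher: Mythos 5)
Your overall strategy is exactly the paper's: adding the two variational inequalities, splitting the adjoint difference through an intermediate adjoint, piecewise integration by parts with cancellation of the interior $m_{\switch}$-pairings via the adjoint transmission condition, and absorption of the initial-value term via \Cref{lem:relationIntermeadiateAdjoints} with a Young parameter tuned through $C_1$. The derivation of $\BoundControlA$ is correct and reproduces the stated constants (your $\varepsilon=2/C_1$ corresponds to the paper's $\varepsilon_4=C_1/2$).

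There is, however, a genuine gap in the derivation of $\BoundControlB$. After the duality identity the cross term is $\langle \optOutVarRed-\optOutVar,\optOutVar-\intermediateOutVar\rangle_{L^2}$ (plus its endpoint counterpart), and you split the factor $\optOutVar-\optOutVarRed$ as $(\optOutVar-\intermediateOutVar)+(\intermediateOutVar-\optOutVarRed)$. This leaves $-\tfrac{1}{2}\|\intermediateOutVar-\optOutVar\|^2_{L^2}$ as the only negative square available for absorption. But in case B the initial-value term is controlled through \eqref{eqn:opt:adj:adj:che}, which bounds $\|e_{\intermediateAdStateB}(t_n)\|^2_{m_{\switch(t_n)}}$ by multiples of $\|\optOutVarRed-\optOutVar\|^2_{L^2}$ and $\|\optOutVarRed(T_n)-\optOutVar(T_n)\|^2_{\R^{\outVarDim}}$, i.e., by non-computable quantities measured against $\optOutVarRed$, not against $\intermediateOutVar$. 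The claimed ``perfect cancellation'' therefore does not occur: $-\tfrac{1}{2}\|\intermediateOutVar-\optOutVar\|^2$ cannot offset $\|\optOutVarRed-\optOutVar\|^2$. The paper instead splits the other factor, $\optOutVar-\intermediateOutVar=(\optOutVar-\optOutVarRed)+(\optOutVarRed-\intermediateOutVar)$, producing $-\tfrac{1}{2}\|\optOutVarRed-\optOutVar\|^2_{L^2}$ and $-\tfrac{\mu}{2}\|\optOutVarRed(T_n)-\optOutVar(T_n)\|^2_{\R^{\outVarDim}}$ --- exactly the quantities delivered by \eqref{eqn:opt:adj:adj:che}. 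Your version could be rescued by an extra triangle inequality $\|\optOutVarRed-\optOutVar\|^2\le 2\|\optOutVarRed-\intermediateOutVar\|^2+2\|\intermediateOutVar-\optOutVar\|^2$, but this degrades the constants and no longer yields \eqref{control:bound:B} as stated. A secondary, minor issue: your choice $\varepsilon=1/(2C_1)$ in case B puts the coefficient $C_1$ on $\initBound^2$ before the final rescaling by $2/\lambda$, giving $\tfrac{2C_1}{\lambda}\initBound^2$ rather than the stated $\tfrac{C_1}{\lambda}\initBound^2$; the correct choice in your convention is $\varepsilon=1/C_1$.
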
 
\begin{proof}
	Setting $v = \optInpVarN\in\U_n$ in \eqref{eq: reduced state mpc subproblem gradient} and $v = \optInpVarRedN\in\U_n$ in \eqref{eq: state mpc subproblem gradient} and adding the equations yields for $p\in \{\intermediateAdStateA, \intermediateAdStateB \}$
	\begin{align}
		\label{eqn:gradientEstimate}
		\begin{aligned}
		\lambda \|\optInpVarRedN - \optInpVarN\|^2_{\U_n} &\leq \langle \bOper_{\switch}'(\optAdStateRed-\optAdState), \optInpVarN-\optInpVarRedN \rangle_{\U_n}\\
		&= \langle \bOper_{\switch}'(\optAdStateRed-p), \optInpVarN-\optInpVarRedN \rangle_{\U_n} + \langle \bOper_{\switch}'(p-\optAdState), \optInpVarN-\optInpVarRedN \rangle_{\U_n}.
		\end{aligned}
	\end{align}
    Choosing $p=\intermediateAdStateA$ leads to the bound $\BoundControlA$, while the choice $p=\intermediateAdStateB$ results in the bound $\BoundControlB$.
    We first focus on the bound {$\BoundControlB$}.
	For the second term in \eqref{eqn:gradientEstimate} we test the state equation for $\optState$ and $\intermediateState$ against 
 $\varphi = e_{\intermediateAdStateB}(t)$ defined in \eqref{eqn:err:adj:A:B}, this lead to
	\begin{align}\label{eqn:controlBoundPorrf:DualityArgument}
 \begin{aligned}
		\langle \bOper_{\switch}'(\intermediateAdStateB-&\optAdState), \optInpVarN-\optInpVarRedN \rangle_{\U_n}= \int_{t_n}^{T_n} b_{\switch(t)}(\optInpVarN(t) - \optInpVarRedN(t),e_{\intermediateAdStateB}(t)) \dt\\
    =& \int_{t_n}^{T_n} m_{\switch(t)}\big(\ddt \big(\optState(t) - \intermediateState \big), e_{\intermediateAdStateB}(t)\big) + a_{\switch(t)}(\optState(t) - \intermediateState(t), e_{\intermediateAdStateB}(t)) \dt\\
    =& \int_{t_n}^{T_n} -m_{\switch(t)}\left(\optState(t)-\check{\state}(t), \ddt e_{\intermediateAdStateB}(t)\right) + a_{\switch(t)}(\optState(t) - \check{\state}(t), e_{\intermediateAdStateB}(t)) \dt\\
		&+ m_{\switch(T_n)}(\optState(T_n) - \intermediateState(T_n), e_{\intermediateAdStateB}(T_n))-m_{\switch(t_n)}(\state_{n} - \intermediateInitState, e_{\intermediateAdStateB}(t_n))\\
     &+\sum_{t_{n,i}\in\calT_n}m_{\switch(t^{-}_{n,i})}(\optState(t_{n,i}) - \intermediateState(t_{n,i}),e_{\intermediateAdStateB}(t_{n,i}^-))-m_{\switch(t^{+}_{n,i})}(\optState(t_{n,i}) - \intermediateState(t_{n,i}),e_{\intermediateAdStateB}(t_{n,i}^+)),
     \end{aligned}
	\end{align}
	where the last equality follows from \Cref{lemma:S-IBP}. Using the adjoint system for $\optAdState=\solA(\optOutVar)$ and $\intermediateAdStateB=\solA(\optOutVarRed)$ with $\varphi=\optState(t)-\intermediateState(t)$ together with the symmetry of $m_i$ (see~\Cref{ass:switchedSystem}\,\ref{ass:switchedSystem:M}), we obtain
	\begin{equation}\label{eqn:tmpest:0}
	\begin{aligned}
		\langle \bOper_{\switch}^*(\intermediateAdStateB-\optAdState), \optInpVarN-\optInpVarRedN \rangle_{\U_n} &= \int_{t_n}^{T_n} \langle \optOutVarRed(t) - \optOutVar(t), \cOper_{\switch(t)}(\optState(t)-\intermediateState(t))\rangle_{\R^{\outVarDim}} \dt\\
		&\qquad + \mu \langle \optOutVarRed(T_n) - \optOutVar(T_n), \cOper_{\switch(T_n)}\big(\optState(T_n) - \intermediateState(T_n)\big)\rangle_{\R^{\outVarDim}}\\
  &\qquad -m_{\switch(t_n)}(\state_{n} - \intermediateInitState, e_{\intermediateAdStateB}(t_n)).
	\end{aligned}
	\end{equation}
    Combining \eqref{eqn:gradientEstimate} for $p=\intermediateAdStateB$ with \eqref{eqn:tmpest:0} yields
    \begin{align} \label{eqn:tmpest:5}
    \begin{aligned}
		\lambda \|\optInpVarRedN - \optInpVarN\|^2_{\U_n} 
		 \;\leq & \; \langle \bOper_{\switch}'(\optAdStateRed-\intermediateAdStateB), \optInpVarN-\optInpVarRedN \rangle_{\U_n} \; + \;\langle\optOutVarRed- \optOutVar, \optOutVar-\intermediateOutVar\rangle_{L^2}\\ 
        +&\;\mu\langle  \optOutVarRed(T_n)- \optOutVar(T_n), \optOutVar(T_n)-\intermediateOutVar(T_n)\rangle_{\R^{\outVarDim}}
        +\big| m_{\switch(t_n)}(\state_{n} - \intermediateInitState, e_{\intermediateAdStateB}(t_n)) \big|.
        \end{aligned}
	\end{align}
    Further, using Cauchy-Schwarz and Young's weighted inequality with $\varepsilon_1>0$, \eqref{eqn:stateInitEst}, and \Cref{lem:relationIntermeadiateAdjoints} we find
    \begin{align}  
    \label{eqn:tempest:4}
    \begin{aligned}
        \big|m_{\switch(t_n)}(\state_{n} -& \intermediateInitState, e_{\intermediateAdStateB}(t_n))\big|\; \leq \; \| \state_{n}(t_n) - \intermediateInitState(t_n)\|_{m_{\switch(t_n)}} \|e_{\intermediateAdStateB}(t_n) \|_{m_{\switch(t_n)}}\\
        \leq& \; \tfrac{\varepsilon_1}{2}\initBound^2 + \tfrac{1}{2\varepsilon_1}\|e_{\intermediateAdStateB}(t_n) \|_{m_{\switch(t_n)}}^2\\
        \leq &\; \tfrac{\varepsilon_1}{2}\initBound^2 + \tfrac{1}{2\varepsilon_1}\left({\tfrac{\gamma^2_{\cOper_{\switch},{a}_{\switch}}}{2}}\|\optOutVarRed - \optOutVar\|^2_{L^2}
    +{\mu^2 \gamma^2_{\cOper_{\sigma(T_n)},m_{\switch(T_n)}}} \| \optOutVarRed(T_n) - \optOutVar(T_n)\|^2_{\R^{\outVarDim}}\right) .
    \end{aligned}
    \end{align}
	Adding and subtracting $\optOutVarRed$, using Cauchy-Schwartz and the weighted Young's inequality for $\varepsilon_2>0$, we get for the second term in \eqref{eqn:tmpest:5}
	\begin{align}
		\label{eqn:tmpest:2}
		\begin{aligned}
		 \langle \optOutVarRed - \optOutVar, \optOutVar-\intermediateOutVar\rangle_{L^2}
		\;=&\;-\|\optOutVarRed - \optOutVar\|_{L^2}^2 + \langle \optOutVarRed-\optOutVar, \optOutVarRed - \intermediateOutVar\rangle_{L^2}\\
		\leq&\; \left(\tfrac{\varepsilon_2}{2}-1\right)\|\optOutVarRed - \optOutVar\|_{L^2}^2 + \tfrac{1}{2\varepsilon_2}\|\optOutVarRed - \intermediateOutVar\|_{L^2}^2.
		\end{aligned}
	\end{align}
	Similarly, we can estimate the third term in \eqref{eqn:tmpest:5} for $\varepsilon_3>0$ as
	\begin{align}
		\label{eqn:tmpest:3}
  \begin{aligned}
		\mu \langle \outVarRed(T_n) - \optOutVar(T_n),& \optOutVar(T_n) -\intermediateOutVar(T_n)\rangle_{\R^{\outVarDim}} \\
     \leq&\; \mu\left(\tfrac{\varepsilon_3}{2}-1\right)\|\optOutVarRed(T_n) - \optOutVar(T_n)\|_{\R^{\outVarDim}}^2 + \mu\tfrac{1}{2\varepsilon_3}\|\optOutVarRed(T_n) - \intermediateOutVar(T_n)\|_{\R^{\outVarDim}}^2.
  \end{aligned}
	\end{align}
    Choosing 
    $ \varepsilon_1=\max\{\nicefrac{\gamma^2_{\cOper_{\switch},{a}_{\switch}}}{2},\mu \gamma^2_{\cOper_{\sigma(T_n)},m_{\switch(T_n)}}\}\eqqcolon C_1$, $\varepsilon_2=\varepsilon_3=1$ and inserting \eqref{eqn:tempest:4}, \eqref{eqn:tmpest:2}, \eqref{eqn:tmpest:3}  into \eqref{eqn:tmpest:5} leads to
	\begin{align}\label{eq:optimalOutputBound}
 \begin{aligned}
		\lambda \|\optInpVarRedN -& \optInpVarN\|^2_{\U_n} 
		 \\
   \leq &\;  \langle \bOper_{\switch}'(\optAdStateRed-\intermediateAdStateB), \optInpVarN-\optInpVarRedN \rangle_{\U_n}  +\tfrac{1}{2} \|\optOutVarRed - \intermediateOutVar\|_{L^2}^2  + \tfrac{\mu}{2}\|\optOutVarRed(T_n) - \intermediateOutVar(T_n)\|_{\R^{\outVarDim}}^2
        +\tfrac{C_1}{2}\initBound^2, 
        \end{aligned}
	\end{align}
    Finally, using Cauchy-Schwartz together with the weighted Young's inequality
    \begin{equation*}
		\nonumber 
		\langle \bOper_{\switch}'(\optAdStateRed-\intermediateAdStateB), \optInpVarN-\optInpVarRedN \rangle_{\U_n} \leq \tfrac{1}{2\lambda} \|\bOper_{\switch}'(\optAdStateRed-\intermediateAdStateB)\|_{\U_n}^2 + \tfrac{\lambda}{2} \|\optInpVarRedN - \optInpVarN\|^2_{\U_n},
	\end{equation*}
    that, plugged into \eqref{eq:optimalOutputBound}, implies the bound $\BoundControlB$.

    Now we focus on $\BoundControlA$ and thus choose $\adState=\intermediateAdStateA$ in \eqref{eqn:gradientEstimate}. Similar to in \eqref{eqn:controlBoundPorrf:DualityArgument}, we obtain for the second term in \eqref{eqn:gradientEstimate} using the definition of $\intermediateState$ and $\intermediateAdStateA$ and the corresponding equations
    \begin{equation*}
	\begin{aligned}
		\langle \bOper_{\switch}^*(\intermediateAdStateA-\optAdState), \optInpVarN-\optInpVarRedN \rangle_{\U_n} &=  \langle \optOutVar- \intermediateOutVar, \intermediateOutVar-\optOutVar\rangle_{L^2}+ \mu \langle \optOutVar(T_n) - \intermediateOutVar(T_n),\intermediateOutVar(T_n) - \optOutVar(T_n)\big)\rangle_{\R^{\outVarDim}} \\
   &\qquad- m_{\switch(t_n)}(\state_{n} - \intermediateInitState, e_{\intermediateAdStateA}(t_n)).
	\end{aligned}
	\end{equation*}
   Therefore \eqref{eqn:gradientEstimate} for $\adState=\intermediateAdStateA$ reduces to
    \begin{align} \label{eqn:optimalOutputBound2}
		\begin{aligned}
		\lambda \|\optInpVarRedN - \optInpVarN\|^2_{\U_n}\;+&\; \|\intermediateOutVar - \optOutVar\|_{L^2}^2 \;+\; \mu\|\intermediateOutVar(T_n) - \optOutVar(T_n)\|_{\R^{\outVarDim}}^2\\
  \leq &\; \langle \bOper_{\switch}'(\optAdStateRed - \intermediateAdStateA), \optInpVarN-\optInpVarRedN \rangle_{\U_n}
  +\big|m_{\switch(t_n)}(\state_{n} - \intermediateInitState, \intermediateAdStateA(t_n)-\optAdState(t_n))\big|.
		\end{aligned}
	\end{align}
    The initial term can be treated similarly as in \eqref{eqn:tempest:4} using \eqref{eqn:opt:adj:adj:hat} in \Cref{lem:relationIntermeadiateAdjoints}, i.e.,
    \begin{align}        \label{eqn:tempest:6}
   \begin{aligned}
        \big| m_{\switch(t_n)}(\state_{n} -& \intermediateInitState, e_{\intermediateAdStateA}(t_n))\big|\\
        \leq &\; \tfrac{\varepsilon_4}{2}\initBound^2 + \tfrac{1}{2\varepsilon_4}\left(\tfrac{\gamma^2_{\cOper_{\switch},{a}_{\switch}} }{2}\|\intermediateOutVar - \optOutVar\|^2_{L^2}+\mu^2 \gamma^2_{\cOper_{\sigma(T_n)},m_{\switch(T_n)}} \| \intermediateOutVar(T_n) - \optOutVar(T_n)\|^2_{\R^{\outVarDim}}\right).
        \end{aligned}
    \end{align}
 Using Cauchy-Schwartz together with the weighted Young's inequality, we find
    \begin{equation*}
		\nonumber 
		\langle \bOper_{\switch}'(\optAdStateRed-\intermediateAdStateA), \optInpVarN-\optInpVarRedN \rangle_{\U_n} \leq \tfrac{1}{2\lambda} \|\bOper_{\switch}'(\optAdStateRed-\intermediateAdStateA)\|_{\U_n}^2 + \tfrac{\lambda}{2} \|\optInpVarRedN - \optInpVarN\|^2_{\U_n},
	\end{equation*}
 that, together with \eqref{eqn:optimalOutputBound2} and \eqref{eqn:tempest:6} for $\varepsilon_4=\tfrac{C_1}{2}$ directly implies  the bound $\BoundControlA$.
\end{proof}
Note that both error bounds $\BoundControlA$ and $\BoundControlB$ require bounds on the state and adjoint output maps and the error in the initial value.
\begin{remark}\label{rem:unconstrainedAPost}
    If there is no perturbation in the initial value, i.e., $\initBound=0$, then the bound $\BoundControlA$ simplifies to
        \begin{align}\label{eq:remarkControlBound}
             \|\optInpVarN-\optInpVarRedN\|_{\U_n} &\leq \BoundControlA(\optInpVarRedN, 0) = \tfrac{1}{\lambda}\|\bOper_{\switch}'(\optAdStateRed- \intermediateAdStateA)\|_{\U_n}
        \end{align}
        and can thus be interpreted as a perturbation bound (see, e.g., {\cite{DBLP:journals/coap/TroltzschV09}}) with the perturbation $\xi = \bOper_{\sigma}'(\optAdStateRed- \intermediateAdStateA)$. However, our bounds hold for arbitrary $g_n$ satisfying \Cref{ass:propertiesG}, while in the literature only the special case $g_n=\mathds{1}_{\Uad}$ has been considered so far. In the case $g_n\equiv 0$, the bound in \eqref{eq:remarkControlBound} and the bound from {\cite{DBLP:journals/coap/TroltzschV09}} coincide. To see this, we add $\pm \lambda (\optInpVarRedN-\inpVarDes)$ in \eqref{eq: a post estimate general all adjoint} and use the unconstrained optimality condition for the \ROM-\MPC subproblem \eqref{eqn:MPCsubproblem:red:FONC}, i.e.,
        \[\nabla \reduce{\Jhat}_n(\optInpVarRedN) = \bOper_{\switch}'\optAdStateRed + \lambda (\optInpVarRedN-\inpVarDes)=0,\]
        to obtain
         \begin{align}
             \|\optInpVarN-\optInpVarRedN\|_{\U_n} &\leq \BoundControlA(\optInpVarRedN,0)=\tfrac{1}{\lambda}\|\bOper_{\switch}'(\optAdStateRed- \intermediateAdStateA)\|_{\U_n}
             = \tfrac{1}{\lambda}\|\nabla \Jhat_n (\optInpVarRedN)\|_{\U_n}\nonumber.
        \end{align}
         Note that this bound is a standard residual-based coercivity bound (cf.~{\cite{patera2007reduced}}) for the unconstrained first-order optimality condition \eqref{eq: unconstrained opt condition}. The bilinear form represents the Hessian of ${\Jhat}_n$, which possesses the coercivity constant $\lambda>0$, and the gradient $\nabla {\Jhat}_n$ acts as the residual operator, incorporating the reduced control $\optInpVarRedN$. In that sense, \eqref{eq:remarkControlBound} extends the standard coercivity residual bound.
\end{remark}
\subsection{A-posteriori error estimates for the state and adjoint}
\label{subsec:apostStateAdjoint}
After the reduced control $\optInpVarRedN$ is computed, the error bound $\BoundControlA$ in  \eqref{eq: a post estimate general all adjoint} (or $\BoundControlB$) can be evaluated by solving the \FOM equations for $\intermediateAdStateA, \intermediateState$ (or for $\intermediateAdStateB, \intermediateState$). Since the direct evaluation of these bounds depends on \FOM calculations, we will call them \emph{expensive-to-evaluate bounds}. These bounds may be suitable for certain scenarios, such as a single optimization of the open-loop problem. However, it can become costly in a multi-query context for open-loop problems, like \MPC (see \Cref{fig:mpcExample1table}).

Therefore, in this subsection, we aim to derive an upper bound whose computation is based solely on the \ROM level (see \Cref{cor:errBoundOptimalControl}). We will refer to these estimates as \emph{cheap-to-evaluate bounds}. Specifically, we focus on determining an upper bound for $\BoundControlB$, noting that a similar approach can be applied to $\BoundControlA$.
One way to achieve this is by incorporating output bounds for the primal and dual outputs, such as those derived from balanced truncation model reduction. Alternatively, one can estimate using the continuity constants of the output operators and introduce residual-based error estimates for the state and adjoint equations, which will be discussed in the next two results.
In the specific case of the switched system \eqref{eqn:weakForm}, we must account for the contributions arising from switching within the bilinear form $m_\switch$. The proof involves iteratively estimating across the switching intervals, progressing from left to right (or vice versa for the adjoint), as the residual generated by the initial (or terminal) condition is known. Along the way, it is necessary to consider the jumps in the bilinear form $m_\switch$, characterized by the constants $c_{i,j}$ defined in \eqref{eqn:def:cij}.
\begin{lemma}[A-posteriori estimate for the intermediate state]
\label{thm:errorBoundStateEnergy}
    Let \Cref{ass:switchedSystem,ass:switchedSystem:A:coerciv} be satisfied. Let $\intermediateState=\solper(\intermediateInitState,\optInpVarRedN)\in \X_n$ and $\optStateRed=\reduce\solper(\stateRed_n,\optInpVarRedN) \in \Xred_n$ be given. 
    Define the intermediate reduction error as $\err \vcentcolon= \intermediateState - \optStateRed$ and the residual $\residual\colon \timeInt_n\to V'$ as
\begin{align*}
		\residual(t) \vcentcolon= \bOper_{\switch(t)}\optInpVarRedN(t) - \mOper_{\switch(t)}\ddt{\stateRed}(t) - \aOper_{\switch(t)}\stateRed(t).
\end{align*}
Then, for any $\tilde{\nrSwitches}\in\{1,\ldots,\nrSwitches\}$, we have 
\begin{align} \label{eqn:stateBound:InductionHypothesis2}
    \begin{aligned}
    \|e(t_{n,\tilde \nrSwitches})\|_{m_{\switch(t^{-}_{\tilde \nrSwitches})}}^2 &+ \sum\limits_{i=0}^{\tilde \nrSwitches-1}\omega_{\tilde \nrSwitches,i}\int_{t_{n,i}}^{t_{n,i+1}}\|e(t)\|_{a_{\switch(t)}}^2 \dt \\
    &\leq   
    \omega_{\tilde\nrSwitches,0 } \|e(t_n)\|_{m_{\switch(t_n)}}^2 
    + \sum\limits_{i=0}^{\tilde \nrSwitches-1}\omega_{\tilde \nrSwitches,i}\int_{t_{n,i}}^{t_{n,i+1}}\|\residual(t)\|_{{a'_{\switch(t)}}}^2 \dt
    \end{aligned}
\end{align}
with $\omega_{\tilde \nrSwitches,i} = \prod_{k=i+1}^{\tilde \nrSwitches-1}c_{\switch(t^{+}_{n,k}),\switch(t^-_{n,k})} \quad \text{for }i=0,\ldots,\tilde \nrSwitches-1$,
where $c_{\switch(t^{+}_{n,k}),\switch(t^-_{n,k})}$ is defined in \eqref{eqn:def:cij}. Note that $\omega_{\tilde \nrSwitches}=(\omega_{\tilde \nrSwitches,i})_{i=0}^{\tilde \nrSwitches-1}$ depends on $n$, but we have omitted the subscript for notational convenience. In particular, for $\tilde \nrSwitches=\nrSwitches$ we define $\omega\coloneqq \omega_N$ and \eqref{eqn:stateBound:InductionHypothesis2} simplifies to
\begin{align}
    \label{eqn:stateBound:Venergy}
    \|e(T_n)\|_{m_{\switch(T_n)}}^2 +  \|\err\|_{a_{\switch},\omega}^2 
    \leq \Delta^2_{\state}(\optInpVarRedN) \vcentcolon=\omega_0 \|\err(t_n)\|_{m_{\switch(t_n)}}^2 + \|\residual\|_{a_{\switch}',\omega}^2,
\end{align}
\ReA{with the dual norm $\|\cdot\|_{a_{\switch}',\omega}$ introduced in \eqref{eqn:dualsiwtchnorm}.}
\end{lemma}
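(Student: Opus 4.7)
My plan is to prove \eqref{eqn:stateBound:InductionHypothesis2} by induction on $\tilde{\nrSwitches}$, with the base case being a standard parabolic energy estimate on a single mode-constant interval and the inductive step absorbing the norm-equivalence jump at a switching time. The statement \eqref{eqn:stateBound:Venergy} then follows by specializing to $\tilde{\nrSwitches}=\nrSwitches$.

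\textbf{Setting up the error equation.} First I would subtract the Galerkin equation for $\stateRed$ from the \FOM equation for $\intermediateState$ tested against $\varphi\in V$; since $\optInpVarRedN$ is the common right-hand side, the terms $b_\switch(\optInpVarRedN,\varphi)$ cancel on one side and reconstitute into the residual $R$ on the other, yielding
\begin{equation*}
m_{\switch(t)}(\ddt \err(t),\varphi) + a_{\switch(t)}(\err(t),\varphi) = \langle R(t),\varphi\rangle_{V',V} \qquad\text{for all }\varphi\in V\text{, a.e. in }\timeInt_n.
\end{equation*}
Because both $\intermediateState$ and $\stateRed$ belong to $\X_n$ (no state transitions, since $\kOper_{i,j}=\calI_H$ for the controlled forward system), $\err$ is continuous into $H$ across every $t_{n,i}\in\calT_n$.

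\textbf{Base case on a single mode-constant interval.} On $(t_{n,i},t_{n,i+1})$ the switching signal is the constant value $k_i \vcentcolon= \switch(t^+_{n,i})$, so I may choose $\varphi=\err(t)$ and use symmetry of $m_{k_i}$ to get $\tfrac{1}{2}\ddt\|\err(t)\|_{m_{k_i}}^2+\|\err(t)\|_{a_{k_i}}^2 = \langle R(t),\err(t)\rangle_{V',V}$. Applying duality and Young's inequality $|\langle R,\err\rangle_{V',V}|\le \tfrac{1}{2}\|R\|_{a'_{k_i}}^2+\tfrac{1}{2}\|\err\|_{a_{k_i}}^2$, absorbing half of $\|\err\|_{a_{k_i}}^2$, and integrating from $t_{n,i}$ to $t_{n,i+1}$ produces the single-interval estimate
\begin{equation*}
\|\err(t_{n,i+1}^-)\|_{m_{k_i}}^2 + \int_{t_{n,i}}^{t_{n,i+1}}\|\err(t)\|_{a_{k_i}}^2 \dt \;\leq\; \|\err(t_{n,i}^+)\|_{m_{k_i}}^2 + \int_{t_{n,i}}^{t_{n,i+1}}\|R(t)\|_{a'_{k_i}}^2 \dt.
\end{equation*}
For $i=0$ one has $\err(t_n^+)=\err(t_n)$ and $\omega_{1,0}=1$ (empty product), giving the base case $\tilde{\nrSwitches}=1$ of \eqref{eqn:stateBound:InductionHypothesis2}.

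\textbf{Inductive step: crossing a switching time.} Assume \eqref{eqn:stateBound:InductionHypothesis2} holds at $\tilde{\nrSwitches}$. Since $\err$ is continuous in $H$ at $t_{n,\tilde\nrSwitches}$, the norm-equivalence \eqref{eqn:def:cij} gives
\begin{equation*}
\|\err(t_{n,\tilde\nrSwitches}^+)\|_{m_{\switch(t^+_{n,\tilde\nrSwitches})}}^2 \;\leq\; c_{\switch(t^+_{n,\tilde\nrSwitches}),\switch(t^-_{n,\tilde\nrSwitches})}\,\|\err(t_{n,\tilde\nrSwitches}^-)\|_{m_{\switch(t^-_{n,\tilde\nrSwitches})}}^2,
\end{equation*}
which lets me multiply the inductive hypothesis by $c_{\switch(t^+_{n,\tilde\nrSwitches}),\switch(t^-_{n,\tilde\nrSwitches})}$ and feed the result into the single-interval estimate on $(t_{n,\tilde\nrSwitches},t_{n,\tilde\nrSwitches+1})$. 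By definition $\omega_{\tilde\nrSwitches+1,i} = c_{\switch(t^+_{n,\tilde\nrSwitches}),\switch(t^-_{n,\tilde\nrSwitches})}\,\omega_{\tilde\nrSwitches,i}$ for $i\le \tilde\nrSwitches-1$ and $\omega_{\tilde\nrSwitches+1,\tilde\nrSwitches}=1$, so the weights propagate exactly as required and the induction closes. Taking $\tilde{\nrSwitches}=\nrSwitches$ produces \eqref{eqn:stateBound:Venergy}.

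\textbf{Main obstacle.} The analytical content is a routine Gronwall-free parabolic energy estimate; the real work is bookkeeping the telescoping product of norm-equivalence constants so that $\omega_{\tilde\nrSwitches,i}$ has exactly the stated form. In particular one must be careful that the continuity of $\err$ in $H$ (not in the mode-dependent $m$-inner product) is what justifies the jump step, and that the asymmetric definition of $c_{i,j}$ is applied with the ``post-switch first, pre-switch second'' orientation matching the index in the product formula. Once this bookkeeping is aligned, the induction is mechanical.
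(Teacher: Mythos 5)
Your proposal is correct and follows essentially the same route as the paper's proof: the same per-interval energy estimate obtained by testing with $\err(t)$, Cauchy--Schwarz and Young's inequality in the $a_{\switch}$-duality pairing, and the same induction over switching intervals in which the $H$-continuity of $\err$ and the norm-equivalence constants $c_{\switch(t^+_{n,k}),\switch(t^-_{n,k})}$ propagate the weights via $\omega_{\tilde\nrSwitches+1,i}=c_{\switch(t^+_{n,\tilde\nrSwitches}),\switch(t^-_{n,\tilde\nrSwitches})}\,\omega_{\tilde\nrSwitches,i}$. No gaps.
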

\begin{proof}
	Due to linearity, the error satisfies for $t\in\timeInt_n$
	\begin{align*}
		\left\{\quad
    	\begin{aligned}
        	m_{\switch(t)}(\dot{e}(t),\varphi) + a_{\switch(t)}(e(t),\varphi) &= \langle R(t), \varphi\rangle_{V',V},&& \text{a.e.~in $\timeInt_n$},\\ 
       		e(t_n) &= \intermediateInitState - \stateRed_n,
    \end{aligned}\right.
	\end{align*}
	for all $\varphi\in V$. Note that due to the switching in $m_\switch$, the variables $\state$, $\stateRed$, and $e$ do not admit a weak derivative in $L^2(\timeInt_n,V')$. Therefore, we have to estimate the error on each switching interval separately and concatenate the results later on. For almost all $t\in (t_{n,i},t_{n,i+1})$ and for any $i\in \{0,\ldots,N-1\}$, testing against $e(t)\in V$ results in 
    \begin{align}\nonumber
  \begin{aligned}
		\tfrac{1}{2} \ddt m_{\switch(t^{+}_{n,i})}(e(t),e(t)) + a_{\switch(t^{+}_{n,i})}(e(t),e(t)) \;=&\; m_{\switch(t^{+}_{n,i})}(\dot{e}(t),e(t)) + a_{\switch(t^{+}_{n,i})}(e(t),e(t))\\
  =&\; \langle R(t), e(t)\rangle_{V',V}.
  \end{aligned}
	\end{align}
	 Integrating from $t_{n,i}$ to $t_{n,i+1}$ leads to 
    \begin{equation}\nonumber
       \tfrac{1}{2}\|e(t_{n,i+1})\|_{m_{\switch(t^{-}_{n,i+1})}}^2 + \int_{t_{n,i} }^{t_{n,i+1}}a_{\switch(t)}(e(t),e(t))\dt  \leq \int_{t_{n,i}}^{t_{n,i+1}} \langle R(t), e(t)\rangle_{V',V}\dt + \tfrac{1}{2} \|e(t_{n,i})\|_{m_{\switch(t^{+}_{n,i})}}^2.
    \end{equation}
    Next we apply Cauchy-Schwarz and the weighted Young's inequalities to obtain
    \begin{align}\label{eqn:stateBound:tmp2}
    \begin{aligned}
       \|e(t_{n,i+1})\|_{m_{\switch(t^{-}_{n,i+1})}}^2 +& \int_{t_{n,i} }^{t_{n,i+1}}a_{\switch(t)}(e(t),e(t))\dt \\ 
       \leq& \int_{t_{n,i}}^{t_{n,i+1}} \|R(t)\|^2_{{a_{\switch(t)}}}\dt + \|e(t_{n,i})\|_{m_{\switch(t^{+}_{n,i})}}^2.
    \end{aligned}
    \end{align}
    Now we prove \eqref{eqn:stateBound:InductionHypothesis2} using the induction principle. For $\tilde \nrSwitches=1$, we have from \eqref{eqn:stateBound:tmp2} and $i=0$
    \begin{align}
		\nonumber 
		\|e(t_{n,1})\|_{m_{\switch(t^{-}_{n,1})}}^2 + \int_{t_{n} }^{t_{n,1}}a_{\switch(t)}(e(t),e(t))\dt  \leq \int_{t_{n}}^{t_{n,1}} \|R(t)\|^2_{{{a_{\switch(t)}}}}\dt + \|e(t_{n,0})\|_{m_{\switch(t^{+}_{n,0})}}^2,
	\end{align}
    which is \eqref{eqn:stateBound:InductionHypothesis2} for $\tilde \nrSwitches =1$ since $\omega_{1,0}=1$.
    Now let \eqref{eqn:stateBound:InductionHypothesis2} be valid for $\tilde \nrSwitches=j\in\{1,\ldots,\nrSwitches\}$, we need to show that it holds for $\tilde \nrSwitches=j+1$. From \eqref{eqn:stateBound:tmp2} for $i=j$ we have
    \begin{align}
		\label{eqn:stateBound:tmp3}
  \begin{aligned}
		\|e(t^{-}_{n,j+1})\|_{m_{\switch(t^{-}_{n,j+1})}}^2 +& \int_{t_{n,j} }^{t_{n,j+1}}a_{\switch(t)}(e(t),e(t))\dt \\ 
  \leq &\int_{t_{n,j}}^{t_{n,j+1}} \|\residual(t)\|_{{{a'_{\switch(t)}}}}^2 \dt +
  \|e(t_{n,j})\|_{m_{\switch(t^{+}_{n,j})}}^2.
    \end{aligned}
	\end{align}
    From \eqref{eqn:def:cij} we have $\|e(t_{n,j})\|_{m_{\switch(t^{+}_{n,j})}}^2\leq c_{\switch(t^{+}_{n,j}),\switch(t^-_{n,j})}\|e(t_{n,j})\|_{m_{\switch(t^{-}_{n,j})}}^2$. Therefore, using this last relation, we insert \eqref{eqn:stateBound:InductionHypothesis2} into \eqref{eqn:stateBound:tmp3} for $\tilde \nrSwitches=j$ so that we get
    \begin{align}\label{eqn:ind:pro}
    \begin{aligned}
    &\|e(t^{-}_{n,j+1})\|_{m_{\switch(t^{-}_{n,j+1})}}^2 + \int_{t_{n,j} }^{t_{n,j+1}}a_{\switch(t)}(e(t),e(t))\dt\\
    \leq& \int_{t_{n,j}}^{t_{n,j+1}} \|R(t)\|^2_{a_{\switch(t)}'}\dt -c_{\switch(t^{+}_{n,j}),\switch(t^-_{n,j})} \sum\limits_{i=0}^{ j-1}\omega_{j,i}\int_{t_{n,i}}^{t_{n,i+1}}a_{\switch(t)}(e(t),e(t)) \dt\\
    +  &c_{\switch(t^{+}_{n,j}),\switch(t^-_{n,j})}\left( \prod\limits_{i=1}^{j-1}c_{\switch(t^{+}_{n,i}),\switch(t^{-}_{n,i})} \|e(t_n)\|_{m_{\switch(t_n)}}^2 
    + \sum\limits_{i=0}^{j-1}\omega_{j,i}\int_{t_{n,i}}^{t_{n,i+1}}\|\residual(t)\|_{{{a'_{\switch(t)}}}}^2 \dt\right).
    \end{aligned}	
     \end{align}
    Then, reordering the terms in \eqref{eqn:ind:pro} and using $\omega_{j+1,j}=1$, $c_{\switch(t^{+}_{n,i}),\switch(t^{-}_{n,i})} \omega_{j,i}=\omega_{j+1,i}$, we obtain \eqref{eqn:stateBound:InductionHypothesis2} for $\tilde \nrSwitches=j+1$. Finally, \eqref{eqn:stateBound:Venergy} directly follows from \eqref{eqn:stateBound:InductionHypothesis2} for $\tilde \nrSwitches=\nrSwitches$ using the norms introduced in \Cref{def:energy:norm}. \qedhere
\end{proof}
\begin{remark}
    Note that if there is no switching in the bilinear form $m_\switch$, then all the constants $c_{\switch_i, \switch_{i+1}}$ in \eqref{eqn:stateBound:Venergy} become equal to $1$, simplifying the estimate to the residual-based bound in the energy norm.
\end{remark}
Next, we will focus on the adjoint state. In this case, we must also account for the residual of the continuity condition \eqref{eq: reduced adjoint mpc subproblem 3} at the switching times.
\begin{lemma}[A-posteriori error estimate for the intermediate adjoint state]
	\label{thm:errorBoundAdjointEnergy}
	Let \Cref{ass:switchedSystem,ass:switchedSystem:A:coerciv} be satisfied. Let $\intermediateAdStateB=\solA(\optOutVarRed) \in \X_{n,\switch}$ and $\optAdStateRed=\reduce \solA(\optOutVarRed) \in \Xred_{n,\switch}$ 
 and let us define the error $\addErr \vcentcolon= \intermediateAdStateB - \optAdStateRed$ and the residuals:
	\begin{align*}
 \begin{aligned}
		\adResidual_{T_n} \;\vcentcolon=&\; -\mOper_{\switch(T_n)} \optAdStateRed(T_n) + \mu \cOper_{\switch(T_n)}' (\optOutVarRed(T_n) - \outVar_T), \\
        \adResidual_{t_{n,i}} \;\vcentcolon=&\; -\mOper_{\switch(t_{n,i}^-)} \optAdStateRed(t_{n,i}^-) +\mOper_{\switch(t_{n,i}^+)} \optAdStateRed(t_{n,i}^+) \quad \text{for all } t_{n,i}\in\calT_n,\\
		\adResidual \colon \timeInt_n\to V',\quad t\mapsto \adResidual(t) \;\vcentcolon=&\; \cOper_{\switch(t)}' (\outVarDes(t) - \optOutVarRed(t)) + \mOper_{\switch(t)}' \ddt{\optAdStateRed}(t) - \aOper_{\switch(t)}' \optAdStateRed(t).
  \end{aligned}
	\end{align*}
    Then, for any $\tilde{\nrSwitches}\in\{0,\ldots,\nrSwitches-1\}$, we have
    \begin{align}\label{eqn:adj:est:rel}
    \begin{aligned}             \|\addErr(t_{n,\tilde{\nrSwitches}}^+)\|_{m_{\switch(t^+_{n,\tilde{\nrSwitches}})}}^2 +& \sum_{i=\tilde \nrSwitches }^{\nrSwitches-1}\tilde \omega_{\tilde \nrSwitches,i}\int_{t_{n,i}}^{t_{n,i+1}}  \|\addErr(t)\|^2_{a_{\switch(t)}}\dt\\
        \le&\; 2\sum_{i=\tilde \nrSwitches }^{\nrSwitches-1}\tilde \omega_{\tilde \nrSwitches,i} \|\adResidual_{t_{n,i+1}}\|^{2}_{m_{\switch(t^{+}_{n,i})}}
    +\sum_{i=\tilde \nrSwitches }^{\nrSwitches-1}\tilde \omega_{\tilde \nrSwitches,i}\int_{t_{n,i}}^{t_{n,i+1}}\|\adResidual(t)\|_{a'_{\switch(t)}}^2 \dt,
    \end{aligned}
    \end{align}
    for $\tilde \omega_{\tilde \nrSwitches,i} \vcentcolon= \prod\limits_{k=\tilde \nrSwitches+1}^{i} 2c_{\switch(t^{+}_{n,k}),\switch(t^{-}_{n,k})} $ for $i=\tilde \nrSwitches,\ldots,N-1$.
%
In particular, for $\tilde \nrSwitches=0$ we define $\tilde \omega\coloneqq \tilde \omega_0 = (\tilde \omega_{0,i})_{i=\tilde \nrSwitches}^{N-1}$ and \eqref{eqn:adj:est:rel} reads as
    \begin{align} \label{eqn:errorBoundAdjointEnergy}
    \|\addErr(t_{n,0}^+)\|_{m_{\switch(t_{n,0})}}^2 +  &\|\addErr\|_{a_{\switch},\tilde\omega}^2
    \leq   \Delta^2_{\adState}(\optOutVarRed) \;\vcentcolon=\;2\sum\limits_{i=0}^{N-1}\tilde \omega_i \|\adResidual_{t_{n,i+1}}\|_{m_{\switch(t^-_{n,i+1})}}^2 + \|\adResidual\|_{a_\switch',\tilde \omega}^2.
    \end{align}
\end{lemma}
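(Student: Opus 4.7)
The plan is to adapt the proof of \Cref{thm:errorBoundStateEnergy} to the adjoint setting, which proceeds by \emph{backward} induction from $T_n$ toward $t_n$ and must handle two new sources of residual that were absent in the forward case: a terminal-condition residual $\adResidual_{T_n}$ (since $\optAdStateRed(T_n)$ satisfies the terminal identity only against test functions in $\Vred$) and jump residuals $\adResidual_{t_{n,i}}$ at every interior switching time (since the continuity inherited from \eqref{eq: reduced adjoint mpc subproblem 3} holds only on $\Vred$, not on all of $V$). By linearity of the adjoint system, the error $\addErr=\intermediateAdStateB-\optAdStateRed$ satisfies, for every $\varphi\in V$, the residual-driven identities
\begin{align*}
-m_{\switch(t)}(\ddt\addErr(t),\varphi) + a_{\switch(t)}(\varphi,\addErr(t)) &= \langle \adResidual(t),\varphi\rangle_{V',V} \quad\text{a.e.~in }\timeInt_n,\\
m_{\switch(T_n)}(\addErr(T_n),\varphi) &= \langle \adResidual_{T_n},\varphi\rangle_{V',V},\\
m_{\switch(t^-_{n,i})}(\varphi,\addErr(t^-_{n,i})) - m_{\switch(t^+_{n,i})}(\varphi,\addErr(t^+_{n,i})) &= \langle \adResidual_{t_{n,i}},\varphi\rangle_{V',V}
\end{align*}
for each interior switching time $t_{n,i}\in\calT_n$.

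On each stretch $(t_{n,i},t_{n,i+1})$ where $\switch$ is constant, I would test with $\varphi=\addErr(t)$, rewrite $-m_{\switch(t^+_{n,i})}(\ddt\addErr,\addErr) = -\tfrac{1}{2}\ddt\|\addErr\|^2_{m_{\switch(t^+_{n,i})}}$, integrate from $t_{n,i}$ to $t_{n,i+1}$, and apply Cauchy-Schwarz together with weighted Young's inequality exactly as in \eqref{eqn:stateBound:tmp2}, obtaining the single-interval bound
\[
\|\addErr(t^+_{n,i})\|^2_{m_{\switch(t^+_{n,i})}} + \int_{t_{n,i}}^{t_{n,i+1}}\|\addErr(t)\|^2_{a_{\switch(t)}}\dt \leq \|\addErr(t^-_{n,i+1})\|^2_{m_{\switch(t^-_{n,i+1})}} + \int_{t_{n,i}}^{t_{n,i+1}}\|\adResidual(t)\|^2_{a'_{\switch(t)}}\dt.
\]
To cross the switching point $t_{n,i+1}$ (with $i+1<\nrSwitches$), I would substitute $\varphi=\addErr(t^-_{n,i+1})$ into the jump identity, apply Cauchy-Schwarz, the equivalence constant $c_{\switch(t^+_{n,i+1}),\switch(t^-_{n,i+1})}$ from \eqref{eqn:def:cij}, and $(a+b)^2\leq 2a^2+2b^2$ to deduce
\[
\|\addErr(t^-_{n,i+1})\|^2_{m_{\switch(t^-_{n,i+1})}} \leq 2\,c_{\switch(t^+_{n,i+1}),\switch(t^-_{n,i+1})}\,\|\addErr(t^+_{n,i+1})\|^2_{m_{\switch(t^+_{n,i+1})}} + 2\,\|\adResidual_{t_{n,i+1}}\|^2_{m_{\switch(t^-_{n,i+1})}},
\]
with the residual norm on the right interpreted as the dual norm induced by the pivot identification $H\cong H'$. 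The same test applied to the terminal identity gives $\|\addErr(T_n)\|^2_{m_{\switch(T_n)}}\leq \|\adResidual_{T_n}\|^2_{m_{\switch(T_n)}}$, which I would loosely enlarge by a factor $2$ for uniformity with the generic jump bound.

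The estimate \eqref{eqn:adj:est:rel} then follows by backward induction on $\tilde\nrSwitches$: the base case $\tilde\nrSwitches=\nrSwitches-1$ combines the single-interval bound on $(t_{n,\nrSwitches-1},T_n)$ with the terminal bound, and each inductive step from $\tilde\nrSwitches+1$ to $\tilde\nrSwitches$ chains the single-interval bound on $(t_{n,\tilde\nrSwitches},t_{n,\tilde\nrSwitches+1})$ with the jump bound at $t_{n,\tilde\nrSwitches+1}$; the resulting multiplication of the accumulated right-hand side by $2c_{\switch(t^+_{n,\tilde\nrSwitches+1}),\switch(t^-_{n,\tilde\nrSwitches+1})}$ telescopes into precisely the weights $\tilde\omega_{\tilde\nrSwitches,i}=\prod_{k=\tilde\nrSwitches+1}^{i}2c_{\switch(t^+_{n,k}),\switch(t^-_{n,k})}$ announced in the lemma, while the fresh jump residual picks up the factor $2\tilde\omega_{\tilde\nrSwitches,\tilde\nrSwitches}=2$. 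Specializing to $\tilde\nrSwitches=0$ and rewriting in the compact notation of \Cref{def:energy:norm} produces \eqref{eqn:errorBoundAdjointEnergy}. The main obstacle, as in \Cref{thm:errorBoundStateEnergy}, is the careful bookkeeping of the accumulating weights under repeated Young's inequalities; the additional subtlety here is the consistent treatment of the jump residual $\adResidual_{t_{n,i}}$ via the pivot identification $H\cong H'$, so that the duality pairing $\langle\adResidual_{t_{n,i}},\addErr(t^-_{n,i})\rangle_{V',V}$ can be controlled by the product of $m$-norms appearing on the right-hand side of the statement, after which the induction is essentially combinatorial and mirrors \eqref{eqn:stateBound:InductionHypothesis2}.
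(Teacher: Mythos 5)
Your proposal is correct and follows essentially the same route as the paper's proof: the same residual-driven error system for $\addErr$, the same per-interval energy estimate via testing with $\addErr(t)$ plus Cauchy--Schwarz and Young, the same jump estimate at switching times using the equivalence constants $c_{i,j}$ and $(a+b)^2\le 2a^2+2b^2$, the terminal bound enlarged by a factor $2$ for uniformity, and the same backward induction producing the weights $\tilde\omega_{\tilde\nrSwitches,i}$. No gaps; the bookkeeping you describe is exactly what the paper carries out.
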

\begin{proof}
	We proceed similarly as in the proof of \Cref{thm:errorBoundStateEnergy}. The error equation for $\epsilon$ is given for all $\varphi\in V$ and almost all $t\in \timeInt_n$ as
	\begin{align*}
		\left\{\quad
		\begin{aligned}
			-m_{\switch(t)}(\varphi,\ddt \epsilon(t)) + a_{\switch(t)}(\varphi,\epsilon(t)) &=  \langle \adResidual(t),\varphi\rangle_{V',V}, & \\
			m_{\switch(T_n)} (\epsilon(T_n), \varphi) &= \langle \adResidual_{T_n},\varphi \rangle_{V',V}, &  \\
            m_{\switch(t^{-}_{n,i})} (\epsilon(t_{n,i}^-), \varphi) &= \langle \adResidual_{t_{n,i}},\varphi \rangle_{V',V} + m_{\switch(t_{n,i}^+)}(\addErr(t_{n,i}^+), \varphi), & \forall t_{n,i}\in\calT_n.
		\end{aligned}\right.
	\end{align*}
	Using the Cauchy-Schwarz inequality, we obtain for the terminal condition
	\begin{align}\nonumber
		 \|\epsilon(T_n)\|_{m_\switch(T_n)}^2\;=\;m_{\switch(T_n)}(\epsilon(T_n),\epsilon(T_n)) \leq \|\adResidual_{T_n}\|_{m_\switch(T_n)} \|\epsilon(T_n)\|_{m_\switch(T_n)}.
	\end{align}
    For the switching points $t_{n,i}\in\calT_n$, we get by testing with $\epsilon(t_{n,i}^-)$
    \begin{align*}
		 \|\epsilon(t_{n,i}^-) \|_{m_{\switch(t^{-}_{n,i})}}\;\leq\; \|\adResidual_{t_{n,i}}\|_{m_{\switch(t^{-}_{n,i})}}+\sqrt{c_{\switch(t^+_{n,i}),\switch(t^-_{n,i})}}\,\| \epsilon({t^+_{n,i}})\|_{m_{\switch(t^+_{n,i})}},
	\end{align*}
    which implies, by squaring and then using Young's inequality
    \begin{align}\label{eqn:adjointBoundSwitchEnergy}
		 \|\epsilon(t_{n,i}^-) \|^2_{m_{\switch(t^{-}_{n,i})}}\;\leq\; 2\,\|\adResidual_{t_{n,i}}\|^2_{m_{\switch(t^{-}_{n,i})}}+2{c_{\switch(t^+_{n,i}),\switch(t^{-}_{n,i})}}\| \epsilon({t^+_{n,i}})\|^2_{m_{\switch(t^{+}_{n,i})}}.
	\end{align}
    We now test the error system with $\epsilon(t) \in V$ for almost all $t\in (t_{n,i},t_{n,i+1})$ and $i=0,\ldots,N-1$, to obtain
	\begin{equation*}
		-\tfrac{1}{2}\ddt m_{\switch(t)}(\epsilon(t),\epsilon(t)) + a_{\switch(t)}(\epsilon(t),\epsilon(t))
		\;=\;\langle \adResidual(t), \epsilon(t)\rangle_{V',V}.
	\end{equation*}
	Integration from $t_{n,i}$ to $t_{n,i+1}$ leads to 
	\begin{align}\nonumber
		 \tfrac{1}{2}\|\epsilon(t_{n,i}^+)\|^2_{m_{\switch(t_{n,i}^+)}}+ \int_{t_{n,i}}^{t_{n,i+1}}a_{\switch(t)}(\epsilon(t),\epsilon(t))\dt
		\;=\; \int_{t_{n,i}}^{t_{n,i+1}}\langle \adResidual(t), \epsilon(t)\rangle_{V',V}\dt+\tfrac{1}{2}\|\epsilon(t_{n,i+1}^-)\|^2_{m_{\switch(t_{n,i+1}^-)}}
	\end{align}
    and by Cauchy-Schwarz in combination with Young's inequality, we derive
    \begin{align}
		\label{eqn:adjointBoundEnergy:tmpEnergy}
    \begin{aligned}
        \|\epsilon(t_{n,i}^+)\|^2_{m_{\switch(t_{n,i}^+)}}\;+&\; \int_{t_{n,i}}^{t_{n,i+1}}a_{\switch(t)}(\epsilon(t),\epsilon(t))\dt\\
		\leq&\; \int_{t_{n,i}}^{t_{n,i+1}} \|\adResidual(t)\|^2_{{a'_{\switch(t)}}}\dt+\|\epsilon(t_{n,i+1}^-)\|^2_{m_{\switch(t_{n,i+1}^-)}}.
        \end{aligned}
	\end{align}
    Next, we prove \eqref{eqn:adj:est:rel} by induction and start with the case $\tilde N = N-1$. We obtain from \eqref{eqn:adjointBoundEnergy:tmpEnergy} for $i=N-1$
    \begin{align*}
		 \|\epsilon(t_{n,N-1}^+)\|^2_{m_{\switch(t_{n,N-1}^+)}} \;+&\; \int_{t_{n,N-1}}^{t_{n,N}}a_{\switch(t)}(\epsilon(t),\epsilon(t))\dt
		\leq\; \int_{t_{n,N-1}}^{t_{n,N}}\|\adResidual(t) \|^2_{{a'_{\switch(t)}}}\dt+\|\epsilon(t_{n,N}^-)\|^2_{m_{\switch(t^{-}_{n,N})}}.
	\end{align*}
    By definition $\|\epsilon(t_{n,N}^-)\|^2_{m_{\switch(t^{-}_{n,N})}} =\|\epsilon(T_n)\|^2_{m_{\switch({T_n})}}$ and therefore
    \begin{align}\nonumber
		 \|\epsilon(t_{n,N-1}^+)\|^2_{m_{\switch(t_{n,N-1}^+)}}+ \int_{t_{n,N-1}}^{t_{n,N}}a_{\switch(t)}(\epsilon(t),\epsilon(t))\dt
		\leq \int_{t_{n,N-1}}^{t_{n,N}}\|\adResidual(t) \|^2_{{{a'_{{\switch(t)}}}}}\dt+2\|\adResidual_{T_n} \|^2_{m_{\switch({T_n})}},
	\end{align}
    which is \eqref{eqn:adj:est:rel} for $\tilde \nrSwitches= N-1$.
    Assume \eqref{eqn:adj:est:rel} to be valid for $\tilde \nrSwitches=j$ with $1\le j \le\nrSwitches-1$. We need to show that it holds for $\tilde \nrSwitches=j-1$. Choosing $i=j-1$ in \eqref{eqn:adjointBoundEnergy:tmpEnergy} gives
    \begin{align*}
    \begin{aligned}
        \|\epsilon(t_{n,j-1}^+)\|^2_{m_{\switch(t_{n,j-1}^+)}}+ \int_{t_{n,j-1}}^{t_{n,j}}a_{\switch(t)}(\epsilon(t),\epsilon(t))\dt
		\;\leq\; \int_{t_{n,j-1}}^{t_{n,j}} \|\adResidual(t) \|^2_{{a'_{\switch(t)}}}\dt+\|\epsilon(t_{n,j}^-)\|^2_{m_{\switch(t^-_{n,j})}}.
  \end{aligned}
    \end{align*}
    Combining this with \eqref{eqn:adjointBoundSwitchEnergy} for $i=j$ we get
    \begin{align}\nonumber
    \begin{aligned}
       & \|\epsilon(t_{n,j-1}^+)\|^2_{m_{\switch(t_{n,j-1}^+)}}\;+\; \int_{t_{n,j-1}}^{t_{n,j}}a_{\switch(t)}(\epsilon(t),\epsilon(t))\dt\\ \nonumber
        \leq&\; \int_{t_{n,j-1}}^{t_{n,j}} \|\adResidual(t) \|^2_{{{a'_{{\switch(t)}}}}}\dt+2\|\adResidual_{t_{n,j}}\|^2_{m_{\switch(t^{-}_{n,j})}} +2{c_{\switch(t^+_{n,j}),\switch(t^-_{n,j})}}   \|\epsilon(t_{n,j}^+)\|^2_{m_{\switch(t^+_{n,j})}}.
    \end{aligned}
    \end{align}
    Finally, using the induction hypothesis to bound $\| \epsilon({t^+_{n,j}})\|^2_{m_{\switch(t^+_{n,j})}}$ results in
    \begin{align}\label{eqn:ind:j-1}
    \begin{aligned}
        \|\epsilon(t_{n,j-1}^+)\|^2_{m_{\switch(t_{n,j-1}^+)}}\;+&\; \int_{t_{n,j-1}}^{t_{n,j}}a_{\switch(t)}(\epsilon(t),\epsilon(t))\dt\;
		\leq\;  \int_{t_{n,j-1}}^{t_{n,j}} \|\adResidual(t) \|^2_{{{a'_{{\switch(t)}}}}}\dt\\
  +&\;2\,\|\adResidual_{t_{n,j}}\|^2_{m_{\switch(t^{-}_{n,j})}} 
  +\;2{c_{\switch(t^+_{n,j}),\switch(t^-_{n,j})}}\bigg(
    2\sum\limits_{i=j}^{N-1}\tilde \omega_{j,i} \|\adResidual_{t_{n,i+1}}\|_{m_{\switch(t^-_{n,i+1})}}^2 \\
    +&\; \sum\limits_{i=j}^{N-1}\tilde \omega_{j,i}  \int_{t_{n,i}}^{t_{n,i+1}}\|\adResidual(t)\|_{{{a'_{{\switch(t)}}}}}^2 \dt
    - \sum\limits_{i=j}^{N-1}\tilde  \omega_{j,i} \int_{t_{n,i}}^{t_{n,i+1}}\|\addErr(t)\|_{a_{\switch(t)}}^2 \dt\bigg).
    \end{aligned}
    \end{align}
    Reordering \eqref{eqn:ind:j-1} gives \eqref{eqn:adj:est:rel} for $\tilde \nrSwitches=j-1$. 
\end{proof}
Finally, we can formulate an upper bound for $\BoundControlB$, which can be efficiently evaluated using an offline-online decomposition of the state and adjoint residual norms (cf. \cite{patera2007reduced}).
\begin{corollary}
    \label{cor:errBoundOptimalControl} 
	Let \Cref{ass:switchedSystem,ass:propertiesG,ass:switchedSystem:A:coerciv} be satisfied. Suppose that $\optInpVarN \in \U_n$ and $\optInpVarRedN \in \U_n$ are the unique solutions of the \FOM-\MPC problem \eqref{eqn:MPCsubproblem} and the \ROM-\MPC problem \eqref{eqn:MPCsubproblem:red}, respectively. Then
    \begin{align}
    	\label{eq: a post estimate RB realization}
     \begin{aligned}
    \BoundControlB^2(\optInpVarRedN,\initBound) \leq\ \BoundControlBtil^2(\optInpVarRedN,\initBound)\vcentcolon=C_1\initBound^2+ C_2 \Delta^2_{\adState}(\optOutVarRed) 
          +C_3 \Delta^2_{\state}(\optInpVarRedN)
          \end{aligned}
    \end{align}
    with $C_2,C_3 >0$ given as
    %
    \begin{align*}
     C_2 &\vcentcolon= \max_{i\in\{0,\ldots,N-1\}}\frac{\gamma^2_{\bOper_{\switch},{a}_{\switch}}}{\tilde \omega_i\lambda^2} , & 
     C_3 &\vcentcolon= \max\bigg\{\max_{i\in\{0,\ldots,N-1\}}\frac{\gamma^2_{\cOper_{\switch},{a}_{\switch}}}{ \omega_i\lambda }, \frac{\mu \gamma^2_{\cOper_{\sigma(T_n)},m_{\switch(T_n)}}}{\lambda}\bigg\}, 
    \end{align*}
    and $\omega$, $\tilde\omega$ defined as in  \Cref{thm:errorBoundStateEnergy,thm:errorBoundAdjointEnergy} and $C_1$ as in \Cref{thm:apostErrorControl}.
\end{corollary}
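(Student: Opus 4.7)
The plan is to bound each of the four summands in the definition~\eqref{control:bound:B} of $\BoundControlB^2(\optInpVarRedN,\initBound)$ separately: I would use the uniform continuity constants~\eqref{eqn:properties:c:en:norm2}--\eqref{eqn:properties:c:enrgy:norm} to convert the $\U_n$-norm and the $L^2$-output norms into energy norms of the adjoint error $\addErr=\intermediateAdStateB-\optAdStateRed$ and the state error $\err=\intermediateState-\optStateRed$, and then apply \Cref{thm:errorBoundAdjointEnergy} and \Cref{thm:errorBoundStateEnergy} to replace those energy norms by the cheap residual estimators $\Delta_\adState^2(\optOutVarRed)$ and $\Delta_\state^2(\optInpVarRedN)$. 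The initial-value summand $\tfrac{C_1}{\lambda}\initBound^2$ carries over directly into the $C_1\initBound^2$ contribution to $\BoundControlBtil^2$.

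For the dual term $\tfrac{1}{\lambda^2}\|\bOper_\switch'(\optAdStateRed-\intermediateAdStateB)\|_{\U_n}^2$, I would first invoke the continuity estimate~\eqref{eqn:properties:c:en:norm2} to obtain $\tfrac{\gamma^2_{\bOper_\switch,a_\switch}}{\lambda^2}\|\addErr\|_{a_\switch}^2$, and then pass from the unweighted $a_\switch$-norm to the $\tilde\omega$-weighted one through
\begin{align*}
    \|\addErr\|_{a_\switch}^2\;=\;\sum_{i=0}^{\nrSwitches-1}\int_{t_{n,i}}^{t_{n,i+1}}\|\addErr(t)\|_{a_{\switch(t)}}^2\dt\;\leq\;\Big(\max_{i}\tfrac{1}{\tilde\omega_i}\Big)\|\addErr\|_{a_\switch,\tilde\omega}^2.
\end{align*}
Estimate~\eqref{eqn:errorBoundAdjointEnergy} then produces the bound $C_2\Delta_\adState^2(\optOutVarRed)$ with $C_2$ as stated.

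For the two primal output terms, I would write $\optOutVarRed-\intermediateOutVar=-\cOper_\switch\err$. The integral part $\tfrac{1}{\lambda}\|\cOper_\switch\err\|_{L^2}^2$ is handled by the $a_\switch$-form of~\eqref{eqn:properties:c:en:norm2} (available via the continuous embedding $V\hookrightarrow H$ together with the coercivity of $a_\switch$) combined with the analogous weighted-to-unweighted passage $\|\err\|_{a_\switch}^2\leq(\max_{i}\omega_i^{-1})\|\err\|_{a_\switch,\omega}^2$ and the bound $\|\err\|_{a_\switch,\omega}^2\leq\Delta_\state^2(\optInpVarRedN)$ from~\eqref{eqn:stateBound:Venergy}. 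The terminal part $\tfrac{\mu}{\lambda}\|\cOper_{\switch(T_n)}\err(T_n)\|_{\R^\outVarDim}^2$ is controlled by the pointwise bound~\eqref{eqn:properties:c:enrgy:norm} followed once more by $\|\err(T_n)\|_{m_{\switch(T_n)}}^2\leq\Delta_\state^2(\optInpVarRedN)$, again from~\eqref{eqn:stateBound:Venergy}. Taking the maximum of the two resulting coefficients yields $C_3\Delta_\state^2(\optInpVarRedN)$ and closes the estimate.

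No conceptual difficulty is expected; the one point to be careful about, and precisely the reason the constants $C_2$ and $C_3$ contain the factors $\max_i\tilde\omega_i^{-1}$ and $\max_i\omega_i^{-1}$, is the bookkeeping of the switching-dependent weights $\omega,\tilde\omega$ when moving between the unweighted norms appearing in the continuity inequalities and the weighted energy norms that arise naturally from propagating the residual bounds across the switching intervals.
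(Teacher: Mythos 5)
Your proposal is correct and follows essentially the same route as the paper's proof: term-by-term estimation of $\BoundControlB^2$ via the continuity constants \eqref{eqn:properties:c:en:norm2}--\eqref{eqn:properties:c:enrgy:norm}, the equivalence between the unweighted and the $\omega$-/$\tilde\omega$-weighted energy norms (which is exactly where the factors $\max_i\omega_i^{-1}$ and $\max_i\tilde\omega_i^{-1}$ in $C_2,C_3$ originate), and finally the residual bounds \eqref{eqn:stateBound:Venergy} and \eqref{eqn:errorBoundAdjointEnergy}. Your explicit remark that the $a_\switch$-version of the output continuity follows from the embedding and coercivity is a small clarification the paper leaves implicit; otherwise the arguments coincide.
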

\begin{proof}
Observe that, by \Cref{def:energy:norm}, we have the following equivalence of norms
\begin{equation}\label{eqn:norm:equiv}
    \min_{i\in\{0,\ldots,N-1\}}\omega_i\|\varphi\|^2_{\mathfrak{a}_{\switch}}\;\le\;\|\varphi\|_{\mathfrak{a}_\switch,\omega}^2\;\le\;\max_{i\in\{0,\ldots,N-1\}}\omega_i\|\varphi\|^2_{\mathfrak{a}_{\switch}}.
\end{equation}
Estimating from \eqref{eq: a post estimate general all adjoint} and using \eqref{eqn:properties:c:en:norm2}, \eqref{eqn:properties:c:enrgy:norm}
together with \eqref{eqn:norm:equiv} gives 
    \begin{align*}
    \begin{aligned}
        \tfrac{1}{\lambda^2} \|\bOper_{\switch}' (\optAdStateRed-\intermediateAdStateB)\|_{\U_n}^2 \; +&\;\tfrac{1}{\lambda} \|\optOutVarRed - \intermediateOutVar\|_{L^2}^2 \; + \;\tfrac{\mu}{\lambda}\|\optOutVarRed(T_n)-\intermediateOutVar(T_n)\|_{\R^{\outVarDim}}^2\\
        \leq &  \max_{i\in\{0,\ldots,N-1\}}\tfrac{\gamma^2_{\bOper_{\switch},{a}_{\switch}}}{\tilde \omega_i\lambda^2} \|\optAdStateRed-\intermediateAdStateB\|_{a_\switch,\tilde \omega}^2  \\
         \;+&\; \max_{i\in\{0,\ldots,N-1\}}\tfrac{\gamma^2_{\cOper_{\switch},{a}_{\switch}}}{\lambda \omega_i}
        \|\optStateRed - \intermediateState\|_{a_\switch, {\omega}}^2  \;+\; \tfrac{\mu \gamma^2_{\cOper_{\sigma(T_n)},m_{\switch(T_n)}}}{\lambda}\|\optStateRed(T_n)-\intermediateState(T_n)\|_{m_{\switch(T_n)}}^2\\
        = &\; C_2\|\addErr\|_{a_\switch,\tilde \omega}^2 \; +\; C_3\big(
        \|\err\|_{a_\switch, {\omega}}^2  + \|\err(T_n)\|_{m_{\switch(T_n)}}^2\big)\\
        \leq &\; C_2 \Delta^2_{\adState}(\optOutVarRed) \; +\; C_3\Delta^2_{\state}(\optInpVarRedN),
        \end{aligned}
    \end{align*}
    where for the last inequality we used \Cref{thm:errorBoundStateEnergy,thm:errorBoundAdjointEnergy}.
\end{proof}
Now, we can state how the initial condition and the error in the optimal control propagate to the errors in the $\optInpVarRedN$ controlled \FOM or \ROM state.
\begin{lemma}[A-posteriori estimate for the optimal state]\label{lem:apost_optimal_state}
Let \Cref{ass:switchedSystem,ass:propertiesG,ass:switchedSystem:A:coerciv} be satisfied. Let the optimal \FOM state $\optState=\solper(\state_n,\optInpVarN)\in \X_n$,
 the intermediate \FOM state $\intermediateState =\solper(\intermediateInitState,\optInpVarRedN)\in\X_n$ and
the optimal \ROM state $\optStateRed=\reduce\solper(\stateRed_n,\optInpVarRedN)\in \Xred_n$ be given.
Further, let $\Delta_u\in \{\BoundControlA,\BoundControlB,  \BoundControlBtil\}$ defined in \Cref{thm:apostErrorControl} or \Cref{cor:errBoundOptimalControl}. Let us define the errors
\begin{equation*}
    \intermediateErr=\optState-\intermediateState,\quad\redErr =\optState-\optStateRed.
\end{equation*}
Then, the following error bounds hold for $t\in \closure\timeInt_n$ 
\begin{align}
    \label{eqn:optStateBound:Venergy:FOMROM}
    \|\intermediateErr(t)\|_{m_{\switch(t)}}^2 
      &\leq  \omega_{\nrSwitches_t,0} \Delta_{t_n}^2
     +\frac{C_4(t)}{ 2}\Delta^2_u(\optInpVarRedN, \initBound),\\
    \|\redErr(t)\|_{m_{\switch(t)}}^2 
    &\leq \omega_{\nrSwitches_t,0} \big(\Delta_{t_n}+\|\intermediateInitState-\projVred\intermediateInitState\|_{m_{\switch{(t_n)}}} \big)^2 +C_4(t)\Delta^2_u(\optInpVarRedN, \initBound) + \|\residual\|_{a_{\switch}',\omega_{\nrSwitches_t}, t}^2,\label{eqn:optStateBound:Venergy:ROMROM}
\end{align}
with 
\begin{equation}
    \nonumber
    C_4(t) = \max_{i\in\{0,\ldots,N_t-1\}}{\omega_{\nrSwitches_t,i}\gamma^2_{\bOper_{\switch},{a}_{\switch}}}{ },
\end{equation}
where $N_t\in\{0,\ldots,N-1\}$ is the smallest integer with $t\in [t_{n,N_t},t_{n,{N_{t}+1}})$ and $\omega_{N_t}$ is defined in \Cref{thm:errorBoundStateEnergy}. 
\end{lemma}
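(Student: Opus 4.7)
My plan is to apply \Cref{thm:errorBoundStateEnergy} to two switched linear error equations — one for $\intermediateErr$ and one for $\redErr$ — after a mild adaptation of that lemma from evaluation at the right endpoint $T_n$ to evaluation at an arbitrary intermediate time $t\in\closure\timeInt_n$. That adaptation amounts to carrying the inductive argument over the complete sub-intervals $[t_{n,i},t_{n,i+1}]$ for $i=0,\ldots,N_t-1$ and then performing one last energy integration on the truncated sub-interval $[t_{n,N_t},t]$; since by definition of $N_t$ no switching occurs on this final sub-interval, the inductive structure and the weights $\omega_{N_t,i}$ carry over unchanged.

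For \eqref{eqn:optStateBound:Venergy:FOMROM}, I would first derive the error equation
\begin{align*}
    m_\switch(\ddt\intermediateErr,\varphi) + a_\switch(\intermediateErr,\varphi) = b_\switch(\optInpVarN-\optInpVarRedN,\varphi),\qquad \intermediateErr(t_n)=\state_n-\intermediateInitState,
\end{align*}
valid for all $\varphi\in V$ because $\optState$ and $\intermediateState$ are both FOM trajectories. I would then mimic the proof of \Cref{thm:errorBoundStateEnergy} with source term $R_1=\bOper_\switch(\optInpVarN-\optInpVarRedN)$, handling the duality pairing $\int\langle R_1,\intermediateErr\rangle\dt$ via Young's inequality with parameter $\varepsilon=\tfrac{1}{2}$. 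This choice fully absorbs the coercive term $\int\|\intermediateErr\|_{a_\switch}^2\dt$ and leaves a factor $\tfrac{1}{2}$ in front of the squared residual. Combining the resulting inequality with $\|\state_n-\intermediateInitState\|_{m_{\switch(t_n)}}\leq\initBound$, the continuity estimate \eqref{eqn:properties:c:en:norm2}, the definition of $C_4(t)$, and the a-posteriori control bound $\|\optInpVarN-\optInpVarRedN\|_{\U_n}^2\leq\Delta_u^2(\optInpVarRedN,\initBound)$ from \Cref{thm:apostErrorControl} or \Cref{cor:errBoundOptimalControl} yields \eqref{eqn:optStateBound:Venergy:FOMROM}.

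For \eqref{eqn:optStateBound:Venergy:ROMROM}, I would observe that $\optStateRed\in\Vred$ fails to satisfy the FOM variational equation on all of $V$ by precisely the residual $\residual$ of \Cref{thm:errorBoundStateEnergy}, so that
\begin{align*}
    m_\switch(\ddt\redErr,\varphi) + a_\switch(\redErr,\varphi) = b_\switch(\optInpVarN-\optInpVarRedN,\varphi) + \langle\residual,\varphi\rangle_{V',V},\qquad \redErr(t_n)=\state_n-\stateRed_n,
\end{align*}
for all $\varphi\in V$. I would then repeat the proof of \Cref{thm:errorBoundStateEnergy} with this two-component source, splitting the two duality pairings by Young's inequality with the symmetric choice $\varepsilon_1=\varepsilon_2=1$, so that the two half-absorptions together balance exactly the coercive term $\int\|\redErr\|_{a_\switch}^2\dt$ and no cross terms between the two residuals survive. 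After iteration across the sub-intervals this produces the decoupled contributions $C_4(t)\Delta_u^2$ and $\|\residual\|_{a_\switch',\omega_{N_t},t}^2$. The initial contribution is handled by a single application of the triangle inequality
\begin{equation*}
    \|\state_n-\stateRed_n\|_{m_{\switch(t_n)}} \leq \|\state_n-\intermediateInitState\|_{m_{\switch(t_n)}} + \|\intermediateInitState-\projVred\intermediateInitState\|_{m_{\switch(t_n)}} \leq \initBound+\|\intermediateInitState-\projVred\intermediateInitState\|_{m_{\switch(t_n)}},
\end{equation*}
where I used $\stateRed_n=\projVred\intermediateInitState$, followed by squaring. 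Assembling these three contributions gives \eqref{eqn:optStateBound:Venergy:ROMROM}.

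The main technical point will be the Young splitting in the second error analysis: a naive triangle-then-square applied to the composite residual $\tilde R = R_1+\residual$ at the norm level would generate cross terms between $\Delta_u^2$ and $\|\residual\|^2$ that would spoil the clean form of the stated bound. Performing the splitting inside the variational identity with the symmetric parameter choice instead delivers exactly the constants advertised in the statement. The remaining work — bookkeeping the weights $\omega_{N_t,i}$ across the switches, applying \eqref{eqn:properties:c:en:norm2} pointwise in time, and treating the truncated final sub-interval — is routine once \Cref{thm:errorBoundStateEnergy} is in hand.
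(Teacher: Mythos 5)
Your proposal is correct and follows essentially the same route as the paper: both derive the switched error equations for $\intermediateErr$ and $\redErr$ (the latter carrying the Galerkin residual $\residual$ plus the control mismatch $b_{\switch}(\optInpVarN-\optInpVarRedN,\cdot)$), rerun the energy/induction argument of \Cref{thm:errorBoundStateEnergy}, absorb the control source via \eqref{eqn:properties:c:en:norm2} and the bound $\|\optInpVarN-\optInpVarRedN\|_{\U_n}\leq \Delta_u$, and treat the initial value by the triangle inequality with $\stateRed_n=\projVred\intermediateInitState$. Your explicit Young-parameter bookkeeping correctly reproduces the factor $\tfrac{1}{2}$ gained in \eqref{eqn:optStateBound:Venergy:FOMROM} when the residual term is absent, which the paper only asserts in passing, and your handling of the truncated final sub-interval $[t_{n,N_t},t]$ fills in a detail the paper leaves implicit.
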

\begin{proof}
    We restrict ourselves to showing the result for $\redErr$, which includes the case for $\intermediateErr$. We have the error equation for all $\varphi\in V$
    \begin{align}\label{eqn:optimalstateBound:erroreq}
		\left\{\quad
    	\begin{aligned}
        	m_{\switch(t)}(\ddt \redErr(t),\varphi) + a_{\switch(t)}( \redErr(t),\varphi) &= \langle R(t), \varphi\rangle_{V',V} + b_{\switch(t)}(\optInpVarN(t)-\optInpVarRedN(t), \varphi),&& \text{a.e.~in $\timeInt_n$},\\ 
       		\redErr(t_n) &= \state_n - \stateRed_n.
    \end{aligned}\right.
	\end{align}
    Similar to the proof of \Cref{thm:errorBoundStateEnergy}, we obtain for all $i\in 0,\ldots,N-1$ from the error equation
    and using Young's weighted inequality
    \begin{align}\nonumber
    \begin{aligned}
       \|e(t_{n,i+1})\|_{m_{\switch(t^{-}_{n,i+1})}}^2  
       \leq \int_{t_{n,i}}^{t_{n,i+1}} \|R(t)\|^2_{{a_{\switch(t)}}}+\|\bOper_{\switch(t)}(\optInpVarN-\optInpVarRedN)(t)\|^2_{{a_{\switch(t)}}} \dt + \|e(t_{n,i})\|_{m_{\switch(t^{+}_{n,i})}}^2\dt.
    \end{aligned}
    \end{align}
    Since we have the estimate
    \begin{equation}\nonumber 
        \sum\limits_{i=0}^{ \nrSwitches_t-1}\omega_{ \nrSwitches_t,i}\int_{t_{n,i}}^{t_{n,i+1}}\|\bOper_{\switch(t)}(\optInpVarN-\optInpVarRedN)(t)\|^2_{{a_{\switch(t)}}} \dt\leq \max_{i\in\{0,\ldots,N_t-1\}}\omega_{\nrSwitches_t,i}\gamma_{\bOper_{\switch},{a}_{\switch}}^2 \Delta^2_u(\optInpVarRedN, \initBound)
    \end{equation}
    and due to 
    \begin{equation}\nonumber 
       \|\redErr(t_n)\|_{m_\switch(t_n)}= \|\state_n-\stateRed_n\|_{m_\switch(t_n)}\leq \initBound+\|\intermediateInitState-\stateRed_n\|_{m_\switch(t_n)}
    \end{equation}
    for the initial term (using \eqref{eqn:stateInitEst} and the definition of $\stateRed_n$ below \eqref{eqn:weakForm:reduced}), the claim \eqref{eqn:optStateBound:Venergy:ROMROM} follows using the same induction argument as in the proof of \Cref{thm:errorBoundStateEnergy}. The proof for the claim \eqref{eqn:optStateBound:Venergy:FOMROM} works analogously because in \eqref{eqn:optimalstateBound:erroreq} the residual term vanishes due to $\intermediateErr\in \X_{n,\switch}$, which improves the constant in \eqref{eqn:optStateBound:Venergy:FOMROM} by the factor $\tfrac{1}{2}$. Moreover, we obtain $ \|\intermediateErr(t_n)\|_{m_\switch(t_n)}\leq \initBound$ by \eqref{eqn:stateInitEst}.
\end{proof}
\subsection{Certification of the reduced \MPC control}
\label{subsec:applicationToMPC}
To utilize our certificates within the \MPC algorithm, we proceed as follows. To derive the \ROM-\MPC feedback law in the $n$-th \MPC iteration, we solve the \ROM-\MPC subproblem \eqref{eqn:MPCsubproblem} to find the optimal control $\optInpVarRedN$. The certification framework outlined in the previous subsection can then be applied to assess the error in the reduced \MPC schemes through the following two ways detailed here:
\begin{enumerate}
    \item 
    At each \MPC iteration $n$, we can compute an a-posteriori bound on the error between the \FOM-\MPC trajectory $\stateMPC$ in $\timeInt_n$ and its reduced counterpart. This approach ensures that the \ROM-\MPC trajectories remain within a bounded region around the \FOM-\MPC trajectory, with a radius of $\initBoundn{n+1}$, where the quality of the \ROM determines the size of this region. This concept is detailed in \Cref{cor:aposterioriBoundMPC}.
    \item The computation of $\initBound$ inherently includes the calculation of the control error estimator $\Delta(\optInpVarRedN,0)$, which measures how accurately the \ROM approximates the \FOM feedback law in the current \ROM state. Controlling this error asymptotically results in an exact approximation; see \Cref{cor:MPCasymptoticConvergence}.
\end{enumerate}
If, during the $n$-th \MPC iteration, the \ROM-\MPC trajectory deviates too far from the \FOM-\MPC trajectory or the current \FOM-\MPC feedback law is poorly approximated, in the sense that the error estimators $\initBoundn{n+1}$ or $\Delta(\optInpVarRedN,0)$ exceed user-defined thresholds $\varepsilon_n,\epsilon_n>0$, then the reduced feedback law $\optInpVarRedN$ is rejected. In such cases, a \FOM-\MPC step is performed instead, the \ROM is refined, and the procedure is restarted. This approach provides certification with respect to the last \FOM-\MPC step. The method for updating the \ROM depends on the chosen \MOR technique and is discussed in the following subsection for a \POD-\ROM (see \Cref{rem: extending the ROM POD}). 
On the other hand, if the error estimator confirms that the \ROM approximation is sufficiently accurate, the reduced optimal control $\optInpVarRedN$ is accepted. Subsequently, the \ROM control is applied either to the \FOM system \eqref{eqn:weakForm} or the \ROM system \eqref{eqn:weakForm:reduced}. These resulting approaches are referred to as \FOM-\ROM-\MPC or \ROM-\ROM-\MPC, respectively, and the details are summarized in \Cref{alg:ROM-MPC,alg:ROMROM-MPC}.
\begin{enumerate}
    \item For the \FOM-\ROM-\MPC approach (\Cref{alg:ROM-MPC}), the controller is designed using the \ROM but applied to the \FOM, resulting in the following \FOM-\ROM-\MPC iteration:
    \begin{equation}\label{eq:FOMROMiter}
        \intermediateStateMPC(0)=\theta_0 ,\quad \intermediateStateMPC(t)=\big[\solper(\intermediateStateMPC(t_{n}),\optInpVarRedN)\big](t)\quad \text{for }t\in (t_{n},t_{n+1}].
    \end{equation}
    The certification of this algorithm can be entirely based on \ROM methods that provide output bounds for the primal and dual input-output relationships (see \Cref{thm:apostErrorControl} and \eqref{eq:D_init_FOMROM}). This approach can be particularly advantageous if the Kolmogorov $n$-widths or the Hankel singular values corresponding to the input-to-state map (see \cite{Kolmogoroff1936,Unger2019}) exhibit slow decay, while those of the input-to-output map decrease rapidly.
    \item Conversely, the \ROM-\ROM-\MPC approach (\Cref{alg:ROMROM-MPC}) designs and applies the controller directly at the \ROM level, yielding the \ROM-\ROM-\MPC iteration
    \begin{equation}\label{eq:ROMROMiter}
        \stateRedMPC(0)=\projVred\theta_0 ,\quad \stateRedMPC(t)=\big[\reduce\solper(\stateRedMPC(t_{n}),\optInpVarRedN)\big](t)\quad \text{for }t\in (t_{n},t_{n+1}].
    \end{equation}
    Unlike \eqref{eq:FOMROMiter}, this algorithm is entirely online-efficient, as it remains independent of the \FOM dimension after the \ROM is constructed \ReA{(see also the complexity analysis of both algorithms in \Cref{app:sec_complexity})}. However, it necessitates a high-quality \ROM approximation of the state variable. This requirement is evident in the use of the reduced solution operator in \eqref{eq:ROMROMiter}, which introduces additional state residual terms in the error bounds (see \eqref{eq:D_init_ROMROM}).
\end{enumerate}
\begin{algorithm}[t]
	\caption{\FOM-\ROM-\MPC}\label{alg:ROM-MPC}
	\begin{algorithmic}[1]
		\Require Prediction horizon $T>0$, sampling time $0<\delta<T$, error tolerances $\varepsilon_n,\errortol_n>0$, error estimator $\Delta_u\in \{\BoundControlA,\BoundControlB, \BoundControlAtil\}$, initial value $\state_0\in H$.
        \State Set $\intermediateStateMPC(0) = \state_0$ and $\initBoundn{0}= 0$. 
		\For{$n=0,1,2,...$}
			\State Set $t_n = n\delta$ and $\intermediateState_n = \intermediateStateMPC(t_n)$ and $\stateRed_n = \projVred \intermediateState_n$.\label{line:alg:FOMROMMPC_initvalue}
            \State Solve \ROM-\MPC subproblem \eqref{eqn:MPCsubproblem:red} for the \ROM-\MPC feedback law $\optInpVarRedN$. 
            \State Compute a-posteriori error estimate $\Delta_u(\optInpVarRedN,0)$ and $\initBoundn{n+1}$ in \eqref{eq:D_init_FOMROM}.
            \If{$\Delta_u(\optInpVarRedN,0)>\varepsilon_n$ or $\initBoundn{n+1}>\epsilon_n$}\label{alg:line:ErrorEstCrit}
                 \State Solve the \FOM-\MPC subproblem \eqref{eqn:MPCsubproblemControlreduced} for the \FOM-\MPC feedback law $\optInpVarN$. 
		          \State Apply the control $\optInpVarN$ to the \FOM equation~\eqref{eqn:weakForm} on $\timeInt_n^\delta $ to obtain
                 \begin{align*} \intermediateInpVarMPC(t)\vcentcolon=\optInpVarN{(t)}, \quad\intermediateStateMPC (t)\vcentcolon=\calS(\intermediateState_n,\optInpVarRedN)(t),\quad\intermediateOutVarMPC (t)\vcentcolon=\cOper_{\switch{(t)}} \intermediateStateMPC (t)\quad\text{for }t\in \timeInt_n^\delta.
                \end{align*}
                \State Update the \ROM (c.f.~\Cref{rem: extending the ROM POD}) and reset $\initBoundn{n+1}=0$.
            \Else
                \State \label{algo:line:applyROM}Apply the control $\optInpVarRedN$ to the \FOM equation \eqref{eqn:weakForm} on $\timeInt_n^\delta$ to obtain 
                \begin{align*}
                    \intermediateInpVarMPC(t)& =\optInpVarRedN (t),\quad \intermediateStateMPC(t)=\calS(\intermediateState_n,\optInpVarRedN)(t), \quad\intermediateOutVarMPC(t)=\cOper_{\switch(t)}\intermediateStateMPC(t)\quad\text{for }t\in \timeInt_n^\delta.
                \end{align*}
            \EndIf
        \EndFor
	\end{algorithmic}
    \begin{flushleft}
    \textbf{Output:} $\intermediateOutVarMPC, \intermediateStateMPC$, $\intermediateInpVarMPC$.
    \end{flushleft}
\end{algorithm}
\begin{algorithm}[t]
	\caption{\ROM-\ROM-\MPC}\label{alg:ROMROM-MPC}
	\begin{algorithmic}[1]
		\Require Prediction horizon $T>0$, sampling time $0<\delta<T$, error tolerances $\varepsilon_n,\epsilon_n>0$, error estimator $\Delta_u\in \{\BoundControlA,\BoundControlB, \BoundControlAtil\}$, initial value $\state_0\in H$.
        \State Set $\stateRedMPC(0) = \projVred\state_0$ and $\initBoundn{0}= \|\state_0- \projVred\state_0\|_{m_{\switch(0)}}$. 
		\For{$n=0,1,2,...$}
			\State Set $t_n = n\delta$ and $\stateRed_n = \intermediateState_n = \stateRedMPC(t_n)$. \label{line:alg:ROMROMMPC_initvalue}
            \State Solve \ROM-\MPC subproblem \eqref{eqn:MPCsubproblem:red} for the \ROM-\MPC feedback control $\optInpVarRedN$. 
             \State Compute a-posteriori error estimate $\Delta_u(\optInpVarRedN,0)$ and $\initBoundn{n+1}$ in \eqref{eq:D_init_ROMROM}.
            \If{$\Delta_u(\optInpVarRedN,0)>\varepsilon_n$ or $\initBoundn{n+1}>\epsilon_n$}\label{line:alg:ROMROMupdatemechasim}
                \State Solve the \FOM-\MPC subproblem \eqref{eqn:MPCsubproblemControlreduced} for the \FOM-\MPC feedback law $\optInpVarN$. 
		          \State Apply the control $\optInpVarN$ to the \FOM equation~\eqref{eqn:weakForm} on $\timeInt_n^\delta $ to obtain
                 \begin{align*} \inpVarRedMPC(t)\vcentcolon=\optInpVarN{(t)}, \quad\stateRedMPC (t)\vcentcolon=\solper(\stateRed_n,\optInpVarRedN)(t),\quad\outVarRedMPC (t)\vcentcolon=\cOper_{\switch{(t)}} \stateRedMPC (t)\quad\text{for }t\in \timeInt_n^\delta.
                \end{align*}
                \State\label{line:alg:ROMROMupdate} Update the \ROM (c.f.~\Cref{rem: extending the ROM POD}) and reset $\initBoundn{n+1}=0$.
            \Else
                \State Apply the control $\optInpVarRedN$ to the \ROM equation \eqref{eqn:weakForm:reduced} on $\timeInt_n^\delta$ to obtain 
                \begin{align*}
                    \inpVarRedMPC(t)& =\optInpVarRedN (t),\quad \stateRedMPC(t)=\reduce\solper(\stateRed_n,\optInpVarRedN)(t),\quad\outVarRedMPC(t)=\cOper_{\switch(t)}\stateRedMPC(t)\quad\text{for }t\in \timeInt_n^\delta.
            \end{align*}
            \EndIf
        \EndFor
	\end{algorithmic}
    \begin{flushleft}
    \textbf{Output:} $\outVarRedMPC, \stateRedMPC$, $\inpVarRedMPC$.
    \end{flushleft}
\end{algorithm}
\begin{corollary}[A-posteriori error estimates for the \ROM-\MPC state and feedback law]\label{cor:aposterioriBoundMPC}
Let \Cref{ass:switchedSystem,ass:propertiesG,ass:switchedSystem:A:coerciv} be satisfied.
Let $(\inpVarMPC,\stateMPC)$, $(\intermediateInpVarMPC, \intermediateStateMPC)$, and $(\inpVarRedMPC, \stateRedMPC)$ denote the output of \Cref{alg:FOM-MPC,alg:ROM-MPC,alg:ROMROM-MPC}, respectively. Further, let $\Delta_u\in \{\BoundControlA,\BoundControlB,  \BoundControlBtil\}$ defined in \Cref{thm:apostErrorControl} or \Cref{cor:errBoundOptimalControl} and assume that the restart mechanism in \Cref{line:alg:ROMROMupdatemechasim} of {\Cref{alg:ROM-MPC,alg:ROMROM-MPC}} is not triggered.
\begin{enumerate}
    \item We define the \FOM-\ROM-\MPC state error as 
        \begin{equation*}
            \intermediateErrMPC \coloneqq \stateMPC-\intermediateStateMPC.
        \end{equation*}
        At iteration $n=0$, it holds for the \FOM-\ROM-\MPC
        \begin{align*}
             \Delta_{t_0}& =\|\intermediateErrMPC(0)\|_{m_{\switch(0)}} =0, \qquad  \|\intermediateState_0-\stateRed_0\|_{m_{\switch(0)}}=\|\intermediateState_0-\projVred\intermediateState_0\|_{m_{\switch(0)}}.
        \end{align*}
        At iteration $n\geq 1$, we have the following recursive error bound for the \FOM-\ROM-\MPC state
        \begin{align}
        \label{eqn:optStateBound:MPC:FOMROM_terminal}
            \|\intermediateErrMPC(t_n)\|_{m_{\switch(t_{n})}}^2 
            & \leq         \initBoundn{n}^2(\Delta^2_u(\inpVarRedMPC, \initBoundn{n-1}),\initBoundn{n-1})
        \end{align}
        with
        \begin{align}\label{eq:D_init_FOMROM}
        \begin{aligned}
 \initBoundn{n}^2(\Delta_u(\inpVarRedMPC, \initBoundn{n-1}),\initBoundn{n-1}) \coloneqq &\omega_{\nrSwitches_{t_{n}},0} \initBoundn{n-1}^2+\frac{C_4(t_{n})}{ 2}\Delta^2_u(\optInpVarRedN, \initBoundn{n-1}) , 
     \end{aligned}
        \end{align}
        where the corresponding constants are defined in \Cref{lem:apost_optimal_state}.
    \item We define the \ROM-\ROM-\MPC state error as 
        \begin{equation*}
            \redErrMPC \coloneqq \stateMPC-\stateRedMPC.
        \end{equation*}
        At iteration $n=0$, it holds for the \ROM-\ROM-\MPC
        \begin{align*}
             \Delta_{t_0} &= \|\redErrMPC(0)\|_{m_{\switch(0)}}  = \|\state_0-\projVred\state_0\|_{m_{\switch(0)}}, \qquad  \|\intermediateState_0-\stateRed_0\|_{m_{\switch(0)}}=0.
        \end{align*}
        At iteration $n\geq 1$, we have the following recursive error bound for the \ROM-\ROM-\MPC state
        \begin{align}\label{eqn:optStateBound:MPC:ROMROM_terminal}\|\redErrMPC(t_{n})\|_{m_{\switch(t_{n})}}^2
            \leq  \initBoundn{n}^2(\Delta^2_u(\inpVarRedMPC, \initBoundn{n-1}),\initBoundn{n-1})
        \end{align}
        with
        \begin{align}\label{eq:D_init_ROMROM}
        \begin{aligned}
        \initBoundn{n}^2(\Delta_u(\inpVarRedMPC, \initBoundn{n-1}),\initBoundn{n-1}) &\vcentcolon= 
        \omega_{\nrSwitches_{t_{n}},0} \initBoundn{n-1}^2
+\|\residual\|_{a_{\switch}',\omega_{\nrSwitches_{t_{n}}}, t_{n}}^2 
+ C_4(t_n)\Delta^2_u(\optInpVarRedN, \initBoundn{n-1}),
        \end{aligned}
        \end{align}
        where the corresponding constants are defined in \Cref{lem:apost_optimal_state}.
\end{enumerate}
\end{corollary}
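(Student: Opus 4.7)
The proof is by induction on the \MPC iteration index $n$, and in both cases the inductive step reduces to a single application of \Cref{lem:apost_optimal_state} on the current sampling interval $(t_n, t_{n+1}]$. The recursive structure of \eqref{eq:D_init_FOMROM} and \eqref{eq:D_init_ROMROM} mirrors exactly the roles played in \eqref{eqn:optStateBound:Venergy:FOMROM}--\eqref{eqn:optStateBound:Venergy:ROMROM} by the initial perturbation $\Delta_{t_n}$ and the optimal-control error bound $\Delta_u(\optInpVarRedN, \initBound)$, which is what makes the induction close cleanly.

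The base case $n=0$ is immediate from the initializations in the two algorithms: \Cref{alg:ROM-MPC} sets $\intermediateStateMPC(0) = \stateMPC(0) = \state_0$, so $\initBoundn{0} = 0$ and the only mismatch at $t_0$ is the projection $\|\intermediateState_0 - \projVred\intermediateState_0\|_{m_{\switch(0)}}$; \Cref{alg:ROMROM-MPC} sets $\stateRedMPC(0) = \projVred\state_0$, giving $\initBoundn{0} = \|\state_0 - \projVred\state_0\|_{m_{\switch(0)}}$, while Line \ref{line:alg:ROMROMMPC_initvalue} forces $\intermediateState_0 = \stateRed_0$ so the complementary mismatch vanishes. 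For the inductive step, assume the stated estimate holds at $t_n$. On $(t_n, t_{n+1}]$ the trajectory $\stateMPC$ evolves under the \FOM with initial value $\stateMPC(t_n)$ and feedback $\optInpVarN$, $\intermediateStateMPC$ evolves under the \FOM with initial value $\intermediateStateMPC(t_n)$ and feedback $\optInpVarRedN$, and $\stateRedMPC$ evolves under the \ROM with initial value $\stateRedMPC(t_n)$ and the same reduced feedback $\optInpVarRedN$. The induction hypothesis therefore provides exactly the perturbation bound \eqref{eqn:stateInitEst} for the current subproblem with $\initBoundn{n}$ taking the role of $\Delta_{t_n}$.

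Evaluating \eqref{eqn:optStateBound:Venergy:FOMROM} at $t = t_{n+1}$ with this substitution produces \eqref{eqn:optStateBound:MPC:FOMROM_terminal} through the definition \eqref{eq:D_init_FOMROM}. For the \ROM-\ROM-\MPC case, the identification $\intermediateState_n = \stateRed_n$ imposed by Line \ref{line:alg:ROMROMMPC_initvalue} makes the projection term $\|\intermediateInitState - \projVred\intermediateInitState\|_{m_{\switch(t_n)}}$ in \eqref{eqn:optStateBound:Venergy:ROMROM} vanish, while the residual norm generated by the reduced trajectory contributes the extra term in \eqref{eq:D_init_ROMROM}; evaluating \eqref{eqn:optStateBound:Venergy:ROMROM} at $t = t_{n+1}$ then yields \eqref{eqn:optStateBound:MPC:ROMROM_terminal}. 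The main subtlety is to use \Cref{lem:apost_optimal_state} in its \emph{local} form on $(t_n, t_{n+1}]$: its internal induction across switching points must be restarted on this sampling subinterval, so that only the switches contained in $(t_n, t_{n+1}]$ enter the weights, producing precisely $\omega_{\nrSwitches_{t_{n+1}},0}$, the constant $C_4(t_{n+1})$, and the truncated residual norm appearing in \eqref{eq:D_init_FOMROM}--\eqref{eq:D_init_ROMROM}, with no additional cross-interval coupling beyond the inherited bound $\initBoundn{n}$.
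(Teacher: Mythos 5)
Your proposal is correct and follows essentially the same route as the paper: the base case is read off from the algorithm initializations, and the recursive bound at $t_{n+1}$ is obtained by applying \Cref{lem:apost_optimal_state} (i.e., \eqref{eqn:optStateBound:Venergy:FOMROM} resp.\ \eqref{eqn:optStateBound:Venergy:ROMROM} evaluated at $t=t_{n+1}$) with $\initBoundn{n}$ playing the role of $\Delta_{t_n}$, and with the projection term vanishing in the \ROM-\ROM case by \Cref{line:alg:ROMROMMPC_initvalue}. Your explicit inductive framing and the remark on restarting the switching-point weights on each sampling interval only make precise what the paper leaves implicit.
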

\begin{proof}
    The claim for $n=0$ directly follows from the definition of the \Cref{alg:ROM-MPC,alg:ROMROM-MPC}. For $n\geq 0$, the claim is an application of \Cref{lem:apost_optimal_state}: \eqref{eqn:optStateBound:MPC:FOMROM_terminal} comes from \eqref{eqn:optStateBound:Venergy:FOMROM} for $t=t_{n+1}$. Also, \eqref{eqn:optStateBound:MPC:ROMROM_terminal} comes from \eqref{eqn:optStateBound:Venergy:ROMROM} for $t=t_{n+1}$ and since the term $\|\intermediateState_n-\projVred\intermediateState_n\|_{m_\switch(t_n)}=\|\intermediateState_n-\stateRed_n\|_{m_\switch(t_n)} = 0$ vanishes, due to \cref{line:alg:ROMROMMPC_initvalue} of \Cref{alg:ROMROM-MPC}.
\end{proof}
We now demonstrate that the output, state, and control of the \FOM-\ROM-\MPC exhibit the same asymptotic behavior as the output, state, and control of the \FOM-\MPC, provided the error sequence~$(\varepsilon_n)_{n\in \N}$ decays to zero. This holds under the condition that the \ROM converges to the \FOM as the reduced dimension approaches infinity, a condition that is satisfied by the \MOR method discussed in \Cref{subsec:POD}.
\begin{lemma}[Asymptotic convergence for the \FOM-\ROM-\MPC]\label{cor:MPCasymptoticConvergence}
In the following, we denote by $\inpVarMPC(\theta_\circ),\stateMPC(\theta_\circ),\intermediateInpVarMPC(\theta_\circ),\intermediateStateMPC(\theta_\circ)$ the output of \Cref{alg:FOM-MPC,alg:ROM-MPC}, respectively, starting from $\theta_\circ\in H$. Consider the error sequence $(\varepsilon_n)_{n\in \N}$ from \Cref{alg:ROM-MPC} and assume that $\varepsilon_n\searrow 0$ as $n\to \infty$.
    \begin{enumerate}
        \item \label{cor:MPCasymptoticConvergence:A} It holds
        \begin{align}\nonumber
            \lim_{n\to\infty} \|\inpVarMPC(\intermediateState_n)-\inpVarRedMPC(\intermediateState_n)\|_{\U_n} = 0 \qquad\text{and}\qquad
            \lim_{n\to\infty} \|\stateMPC(\intermediateState_n)-\intermediateStateMPC(\intermediateState_n)\|_{L^2(\timeInt_n,V)} = 0.
        \end{align}
        \item 
        Suppose that for given target $\outVarDes$ the \FOM-\MPC feedback law ensures $\|\outVarMPC(\theta_\circ)(t)-\outVarDes(t)\|_{\R^{\outVarDim}}\to 0 $ for $t\to \infty$ and for every $\theta_\circ\in H$. Then it holds
        \end{enumerate}
        \begin{align}\nonumber
        \lim_{n\to\infty} \|\intermediateOutVarMPC
        (\intermediateState_n)-\outVarDes\|_{L^2(\timeInt_n^\delta,\R^{\outVarDim})} = 0.
        \end{align}
\end{lemma}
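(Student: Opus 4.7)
The plan is to combine the a-posteriori control estimate of \Cref{thm:apostErrorControl} with the state bound of \Cref{lem:apost_optimal_state}, exploiting the acceptance criterion of \Cref{alg:ROM-MPC} to drive the residual to zero.

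For part (i), at iteration $n$ the ROM subproblem of \Cref{alg:ROM-MPC} starts from $\stateRed_n = \projVred\intermediateState_n$, while the FOM subproblem underlying $\inpVarMPC(\intermediateState_n)$ starts from $\intermediateState_n$ itself. I instantiate \Cref{thm:apostErrorControl} with the identification $\state_n = \intermediateInitState = \intermediateState_n$, so that the initial-value perturbation satisfies $\initBound = 0$ and
\begin{equation*}
    \|\inpVarMPC(\intermediateState_n) - \inpVarRedMPC(\intermediateState_n)\|_{\U_n} \leq \Delta_u(\optInpVarRedN,0).
\end{equation*}
The acceptance test of \Cref{alg:ROM-MPC} enforces $\Delta_u(\optInpVarRedN,0) \leq \varepsilon_n$ on the accept branch, whereas on the reject branch the FOM optimal control is applied and the two controls coincide on $\timeInt_n^\delta$; either way the right-hand side vanishes as $\varepsilon_n \searrow 0$. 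For the state I insert this control bound into \eqref{eqn:optStateBound:Venergy:FOMROM} with $\initBound = 0$, which gives a pointwise-in-time $m_\switch$-estimate of order $\varepsilon_n$; an analogous inductive argument mirroring the proof of \Cref{thm:errorBoundStateEnergy} yields an energy-norm bound in $a_\switch$, which under the uniform coercivity of \Cref{ass:switchedSystem:A:coerciv} translates into the desired $L^2(\timeInt_n,V)$-convergence.

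For part (ii) I use the triangle inequality
\begin{equation*}
    \|\intermediateOutVarMPC(\intermediateState_n) - \outVarDes\|_{L^2(\timeInt_n^\delta,\R^{\outVarDim})} \leq \|\intermediateOutVarMPC(\intermediateState_n) - \outVarMPC(\intermediateState_n)\|_{L^2(\timeInt_n^\delta,\R^{\outVarDim})} + \|\outVarMPC(\intermediateState_n) - \outVarDes\|_{L^2(\timeInt_n^\delta,\R^{\outVarDim})}.
\end{equation*}
The first summand is controlled by $\contC$ times the state error from part (i), using \Cref{ass:switchedSystem}\,\ref{ass:switchedSystem:C}, and therefore tends to zero. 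The second summand vanishes by the hypothesis $\|\outVarMPC(\intermediateState_n)(t) - \outVarDes(t)\|_{\R^{\outVarDim}} \to 0$ as $t\to\infty$, since the sampling window $\timeInt_n^\delta$ has left endpoint $t_n\to\infty$; the uniform $L^\infty$-bound on both trajectories implied by \Cref{cor:wellPosednessForward} and \Cref{ass:switchedSystem}\,\ref{ass:switchedSystem:C} upgrades this pointwise convergence to $L^2(\timeInt_n^\delta,\R^{\outVarDim})$-convergence via dominated convergence. The main delicate point is the reading of $\inpVarRedMPC(\intermediateState_n)$ and $\outVarMPC(\intermediateState_n)$ as MPC trajectories restarted from the current state $\intermediateState_n$, which is what permits applying \Cref{thm:apostErrorControl} with $\initBound=0$; extra care is needed on the reject branch, where the computed ROM control $\optInpVarRedN$ is overwritten by $\optInpVarN$ before being applied, so that on that branch the convergence is in fact exact on $\timeInt_n^\delta$.
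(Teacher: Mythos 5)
Your proposal is correct and follows essentially the same route as the paper: part (i) is obtained by applying the control error estimator with $\initBound=0$ (both trajectories restarted from $\intermediateState_n$) together with the acceptance test $\Delta_u(\optInpVarRedN,0)\leq\varepsilon_n$, and the state convergence from the (energy-norm) variant of \eqref{eqn:optStateBound:Venergy:FOMROM}; part (ii) is the same triangle-inequality split into the stabilizability term and the term controlled by part (i). Your additional remarks on the reject branch and on upgrading the pointwise output convergence over the shifting window are harmless elaborations of details the paper leaves implicit.
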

\begin{proof}\, 
    \begin{enumerate}
        \item The first claim follows from the definition of \Cref{alg:ROM-MPC} and the control error estimator
        \begin{equation}\nonumber
            \|\inpVarMPC(\intermediateState_n)-\inpVarRedMPC(\intermediateState_n)\|_{\U_n} \leq \Delta_u(\inpVarRedMPC(\intermediateState_n),0)\leq \varepsilon_n \to 0 \ (n\to \infty).
        \end{equation}
        The second claim follows from a slight modification of \eqref{eqn:optStateBound:Venergy:FOMROM} in \Cref{lem:apost_optimal_state} and the fact that $\initBound$ vanishes is we start both trajectories from $\intermediateState_n$.
        \item We have
        \begin{align}\nonumber
            \|\intermediateOutVarMPC
        (\intermediateState_n)-\outVarDes\|_{L^2(\timeInt_n^\delta,\R^{\outVarDim})} \leq \|\outVarMPC
        (\intermediateState_n)-\outVarDes\|_{L^2(\timeInt_n^\delta,\R^{\outVarDim})}+\|\intermediateOutVarMPC
        (\intermediateState_n)-\outVarMPC
(\intermediateState_n)\|_{L^2(\timeInt_n^\delta,\R^{\outVarDim})}.
        \end{align}
        The first term goes to zero due to the global stabilizability assumption and the second due to \ref{cor:MPCasymptoticConvergence:A}.
    \end{enumerate}
\end{proof}
\begin{remark}
    If we additionally assume that the state residual tends to zero as $n\to\infty$, then \Cref{cor:MPCasymptoticConvergence} also holds for the output of the \ROM-\ROM-\MPC algorithm, i.e., \Cref{alg:ROMROM-MPC}.
\end{remark}
%
\section{Numerical experiments}
\label{sec:numerics}
%
In this section, we compare the performance of the \ROM-\MPC algorithms with the \FOM-\MPC when applied to the example detailed in \Cref{subsection:example,subsec:exampleContinued}. To obtain the finite-dimensional subspace $\Vred\subseteq V$ we rely on \POD, which we review in \Cref{subsec:POD}, following \cite{GubV17,BBMV24}. The computational setup for our example is given in \Cref{subsec:computationalSetup}, and our numerical experiments illustrating the error bounds for the open-loop problem and the \MPC algorithms are presented in \Cref{subsec:numerics:openloop}. Finally, we compare all the \MPC algorithms in \Cref{subsec:numerics:MPC}.\\
%
\vspace{0.2cm}
\noindent\fbox{%
    \parbox{0.98\textwidth}{%
        The code and data used to generate the subsequent results are accessible via
		\begin{center}
			\url{https://doi.org/10.5281/zenodo.14443259}
		\end{center}
		under MIT Common License.
    }%
}\\[.2em]

\subsection{Reduced model via proper orthogonal decomposition}
\label{subsec:POD}
The main idea of \POD is to extract the most relevant information of the available data in a weighted $L^2(\timeInt_n,V)$ sense. Let snapshot data $\snapshots_j\in L^2(\timeInt_n,V)$ for $j=1,\ldots,M$ be available (see \Cref{{rem: extending the ROM POD}} for possible choices).
\POD chooses basis functions $\PODbasis_i\in V$ for $i=1,\ldots,\stateDimRed$ solving the optimization problem
\begin{align}
    \label{eqn:PODminimization}
     \min_{\{\basis_i\}\subseteq V} \sum_{j=1}^{\nrSnapshots} \int_{\timeInt_n}\bigg\|\snapshots_j(t)-\sum_{i=1}^{\stateDimRed} \langle  \snapshots_j(t), \basis_i\rangle_V \,\basis_i \bigg\|^2_V \dt
    \quad \text{s.t.}\quad \langle \basis_i, \basis_j\rangle_V = \delta_{ij} \text{ for }i,j = 1,\ldots, \stateDimRed,
\end{align}
where $\delta_{ij}$ denotes the Kronecker delta.
\begin{theorem}[{\!\!\cite[Thm.~2.7 and Lem.~2.2]{GubV17}}]
    The optimization problem~\eqref{eqn:PODminimization} is solvable and a solution is given by the $\stateDimRed$ eigenvectors $\{\PODbasis_i\}_{i=1}^r\subset V$ corresponding to the largest nonzero eigenvalues $\PODeigVal_1\ge\PODeigVal_2\ge\ldots\ge\PODeigVal_r>0$ of the linear, compact, nonnegative, and self-adjoint operator
    \begin{equation*}
        \calR\colon V\to V, \qquad \basis \mapsto \sum_{j=1}^{M}  \int_{\timeInt_n}\langle \snapshots_j(t),\basis\rangle_{V} \snapshots_j(t) \dt.
    \end{equation*}
\end{theorem}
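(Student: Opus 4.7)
The plan is to recast the minimization problem as a trace-type maximization over orthonormal tuples and then invoke the spectral theorem for compact, self-adjoint, nonnegative operators on the Hilbert space $V$.

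First I would exploit the orthonormality constraint to simplify the objective. Expanding the squared norm and using $\langle \basis_i,\basis_j\rangle_V = \delta_{ij}$ yields
\begin{equation*}
    \sum_{j=1}^M \int_{\timeInt_n}\Big\|\snapshots_j(t)-\sum_{i=1}^{\stateDimRed}\langle \snapshots_j(t),\basis_i\rangle_V\basis_i\Big\|_V^2 \dt
    = \sum_{j=1}^M \int_{\timeInt_n}\|\snapshots_j(t)\|_V^2 \dt - \sum_{i=1}^{\stateDimRed}\sum_{j=1}^M \int_{\timeInt_n}|\langle \snapshots_j(t),\basis_i\rangle_V|^2 \dt .
\end{equation*}
Since the first sum is independent of the choice of basis, \eqref{eqn:PODminimization} is equivalent to maximizing $\sum_{i=1}^{\stateDimRed}\langle \calR\basis_i,\basis_i\rangle_V$ subject to the orthonormality constraint, where $\calR$ acts as stated.

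Next I would establish the asserted operator properties of $\calR$. Linearity is immediate. Boundedness follows from the Cauchy–Schwarz inequality together with $\snapshots_j\in L^2(\timeInt_n,V)$, yielding $\|\calR\basis\|_V \leq \bigl(\sum_j \|\snapshots_j\|_{L^2(\timeInt_n,V)}^2\bigr)\|\basis\|_V$. Self-adjointness is a direct consequence of Fubini's theorem and the symmetry of $\langle\cdot,\cdot\rangle_V$, while nonnegativity follows from $\langle \calR\basis,\basis\rangle_V = \sum_j \int_{\timeInt_n}|\langle \snapshots_j(t),\basis\rangle_V|^2\dt \geq 0$. For compactness I would approximate $\calR$ by finite-rank operators: discretizing each time integral by simple functions produces a sequence of finite-rank operators converging to $\calR$ in operator norm, so $\calR$ is compact as the norm-limit of finite-rank operators.

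With these properties in place, the spectral theorem for compact, self-adjoint, nonnegative operators on $V$ provides a countable orthonormal system of eigenfunctions $\{\PODbasis_i\}_{i\geq 1}\subset V$ with eigenvalues $\PODeigVal_1 \geq \PODeigVal_2 \geq \ldots \geq 0$ tending to zero, and the non-zero eigenvalues have finite multiplicity. The final step is a Ky Fan / Courant–Fischer style argument: for any $V$-orthonormal family $\{\basis_i\}_{i=1}^{\stateDimRed}$, expanding along the eigenbasis of $\calR$ and using Bessel's inequality gives $\sum_{i=1}^{\stateDimRed}\langle \calR\basis_i,\basis_i\rangle_V \leq \sum_{i=1}^{\stateDimRed}\PODeigVal_i$, with equality achieved by $\basis_i = \PODbasis_i$. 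The main obstacle is the compactness verification, since the snapshots live in $L^2(\timeInt_n,V)$ rather than finite-dimensional data; however, the finite-rank approximation by Riemann sums (or by truncating an $L^2$-expansion of each $\snapshots_j$) circumvents this issue cleanly.
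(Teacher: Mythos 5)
The paper does not prove this statement itself but cites it from \cite[Thm.~2.7 and Lem.~2.2]{GubV17}; your argument is the standard proof of that result and is correct: the reduction to maximizing $\sum_{i=1}^{\stateDimRed}\langle\calR\basis_i,\basis_i\rangle_V$, the verification of the operator properties (with compactness via norm-approximation by finite-rank operators, valid since simple functions are dense in the Bochner space $L^2(\timeInt_n,V)$ for separable $V$), and the Ky Fan maximum principle all go through. The only point worth tightening is the last step: besides Bessel's inequality giving $\sum_{i=1}^{\stateDimRed}|\langle\basis_i,\PODbasis_k\rangle_V|^2\le 1$ for each $k$, you also need that these quantities sum over $k$ to at most $\stateDimRed$ before concluding the bound $\sum_{i=1}^{\stateDimRed}\PODeigVal_i$, but this is immediate from orthonormality of the $\basis_i$.
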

The question that remains to be answered is the choice of $\stateDimRed$. It is common, see, e.g., \cite{GubV17}, to use the information content of the basis about the whole snapshot set. In more detail, we are interested in the quantity
\begin{align}
    \label{eqn:POD:basisEnergy}
    \calE(\stateDimRed) \vcentcolon= \frac{\sum_{i=1}^\stateDimRed \PODeigVal_i}{\sum_{i=1}^\infty  \PODeigVal_i} = \frac{\sum_{i=1}^\stateDimRed \PODeigVal_i}{\sum_{j=1}^{\nrSnapshots} 
    \|\snapshots_j\|_{L^2(\timeInt_n,V)}^2},
\end{align}
where the identity is due to the Hilbert-Schmidt theorem that ensures the set of eigenfunctions forms an orthonormal basis of $V$.
Relation~\eqref{eqn:POD:basisEnergy} can be used to determine a basis size~$\stateDimRed$ containing~$e\%$ of the snapshot information by choosing $\stateDimRed$ with $\calE(\stateDimRed)\approx {e}/{100}$. The associated \POD subspace is then given as $\Vred = \spann\{\PODbasis_1,\ldots,\PODbasis_{\stateDimRed}\}$.
\begin{remark}\label{rem: extending the ROM POD}
\,
\begin{enumerate}
    \item We follow \cite{locke2021new} for the numerical solution of~\eqref{eqn:PODminimization}.
    \item The \ROM model update process in \Cref{alg:ROM-MPC,alg:ROMROM-MPC} for \POD works as follows. Starting with an empty model (assuming that the update mechanism is triggered), the \POD model update involves solving the current \FOM-\MPC subproblem \eqref{eqn:MPCsubproblem}, appending the optimal state and adjoint state to the set of snapshots, performing \POD by solving \eqref{eqn:PODminimization}, and selecting the reduced basis according to \eqref{eqn:POD:basisEnergy}. A residual-based error estimator $\BoundControlBtil$ can then be constructed in an offline-online manner by computing the $V_{a_\sigma}$-Rietz representatives of the affine components of the state and adjoint residuals for $\stateDimRed$ basis elements and each switching mode $\nrModes$ (see \cite[Sec.~4.2.5]{Hesthaven2016}). Recomputing \eqref{eqn:PODminimization} at every step may not always be necessary, as incremental \POD methods (see \cite{FAREED2019223}) offer efficient alternatives.
    \item Our approach allows for incorporating other snapshot types, such as those associated with the control target $\inpVarDes$ or derivative information between switching points, ensuring that the \ROM adapts dynamically along the \MPC trajectory. To address outdated data in the snapshot set, a practical strategy is to retain only the most recent snapshots.
\end{enumerate}
\end{remark}
\subsection{Problem data, discretization, and algorithmic setup}
\label{subsec:computationalSetup}
We consider the guiding example from \Cref{subsection:example,subsec:exampleContinued} with the domains as illustrated in \Cref{fig: geometry_tworooms}. The precise values for the domains and the parameters are given in \Cref{tab:parameters} with $\Omega_4 \vcentcolon= \Omega\setminus (\Omega_1\cup \Omega_2 \cup \Omega_3)$.
\begin{table}[ht!]
    \centering
    \caption{Domains and parameters for the numerical setup of the guiding example}
    \label{tab:parameters}
    \begin{tabular}{rcccc}
        \toprule
        \textbf{domain} & $\Omega$ & $\Omega_1$ & $\Omega_2$ & $\Omega_3$\\
        \textbf{value} & $(0,10.3)\times(0,5)$ & $(0,5)\times(0,5)$ & $(5,5.3)\times(2.3,2.7)$ & $(5.3,10.3)\times(0,5)$\\\bottomrule
    \end{tabular}\\[.5em]
    \begin{tabular}{rccccccccc}
        \toprule
        \textbf{coefficient} & $\zeta_1$ & $\zeta_2$ & $\zeta_3$ & $\kappa_1$ & $\kappa_2$ & $\kappa_3$ & $\gamma_o$ & $\advection$ & $c$ \\
        \textbf{value} & 1.00 & 0.5 & 1.00 & 0.01 & 10.0 & 0.01 & 0.15 & $[0.01, 0]^\T$ &$0.01$\\\bottomrule
    \end{tabular}
\end{table}
%
The number of controls is chosen as $\inpVarDim = 10$. Further, for the box constraints $\Uad$ we choose the constant lower and upper bounds $u_a\equiv -20$ and $u_b \equiv 20$, the $L^1$-regularization is chosen to be $\lonereg= 10^{-3}$ and the  $L^2$-regularization as $\lambda= 10^{-2}$. 
For the optimization problem, we choose the target control $\inpVarDes\equiv 0$ and compute the target output $\outVarDes$ as the output of the switched system driven with constant input $\inpVar\equiv 1$ and initial value $\state_\circ\equiv 1$. 
%
For the simulation, we fix the final time $\finalTime = 10$ and the switching points are given by $\tau_i=\nicefrac{i}{2}$ for $i=1,\ldots,20$. For the discretization in time, we apply an implicit Euler scheme using $K= 501$ time points with step size $\tau = \nicefrac{\finalTime}{(K-1)}=0.02$. For the discretization in space, we choose piecewise linear finite elements, resulting in a discrete state dimension of $N=5304$. For the solution of the non-smooth \MPC subproblems, we use a proximal gradient method with Barzilai-Borwein step size with absolute and relative tolerance $10^{-11}$; see \cite{azmi2023nonmonotone}. \ReA{Note that a Newton-type method could also be employed as the inner solver. However, since the \FOM is assumed to be high-dimensional, we restrict ourselves to first-order methods, which are known to scale more efficiently with increasing problem size.} For the \MPC algorithm, we use the sampling time $\delta = \tau $ and the prediction horizon as $T= 20\tau$ leading to $K$ \MPC steps. Furthermore, in the \MPC subproblem, we set $\mu =0$, i.e., we consider no terminal cost. The reduced dimension $\stateDimRed$ is chosen such that $\calE(\stateDimRed)\approx 1-\varrho$ for $\varrho=10^{-12}$ and the $7$ most recent states and adjoints are used to recompute the \POD basis. \ReA{To obtain unbiased results for the computation times, we repeat each experiment ten times and report the average computation times and speed-ups.}
\subsection{Investigation of the error estimator for the open-loop problem}
\label{subsec:numerics:openloop}
In the following, we examine the sharpness of the proposed error bounds for the state, adjoint state, and optimal control of the open-loop problem. Additionally, we analyze the propagated errors in the \MPC state trajectory and \MPC control within the reduced \MPC algorithms. Initially, we focus on the open-loop problem of size $20\tau$ with a randomly fixed initial value $\theta_n \in H$. The \POD basis of size $\stateDimRed$ is constructed from the state and adjoint of the optimal \FOM solution. The effectivity of an error estimator $\Delta$ for a given true error is defined as $\text{eff}(\Delta) \vcentcolon= \nicefrac{\text{true error}}{\Delta} \in (0,1]$. An effectivity close to one indicates a sharp estimator.

In \Cref{fig:ErrorEstStateAdjoint}, we present the error bounds and effectivities for the state and adjoint state, derived in \Cref{thm:errorBoundStateEnergy}~and~\ref{thm:errorBoundAdjointEnergy}, across various basis sizes $\stateDimRed$. The results reveal that the state error bound, with an effectivity near one, is significantly sharper than the adjoint error bound, whose effectivity is closer to $10^{-1}$. This difference arises primarily from the larger constants in the adjoint error bound, a consequence of employing Young's inequality and accounting for error contributions at the switching times.
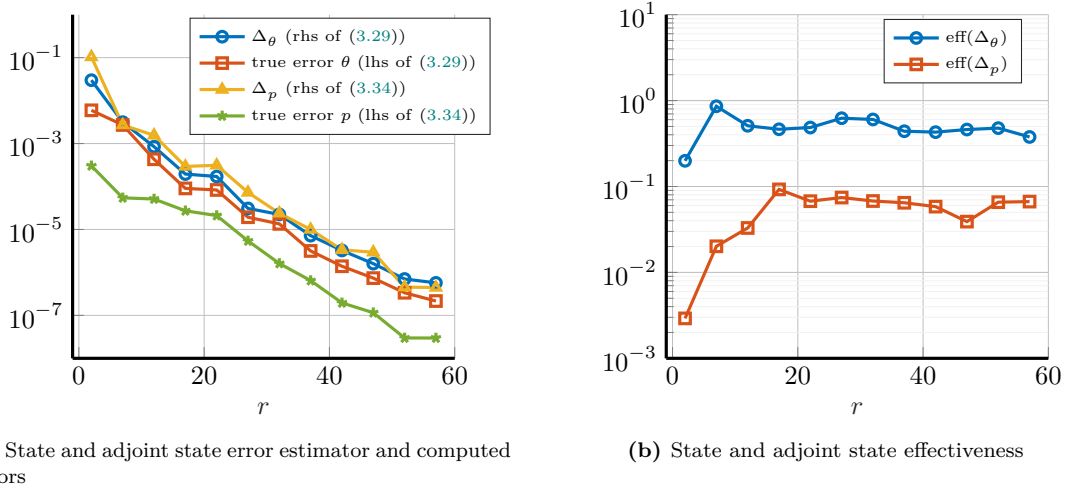
\begin{figure}[htbp]
	\centering
	\begin{subfigure}[t]{0.48\textwidth}
        \centering
	    \begin{tikzpicture}
\begin{axis}[%
	width=\imageWidth,
	height=\imageHeight,
	scale only axis,
	scaled ticks=false,
	grid=both,
	grid style={line width=.1pt, draw=gray!10},
	major grid style={line width=.2pt,draw=gray!50},
	axis lines*=left,
	axis line style={line width=\lineWidth},
xmin=-1,
xmax=60,
xlabel style={font=\color{white!15!black}},
xlabel={$r$},,
ymode=log,
ymin=1e-08,
ymax=1,
yminorticks=true,
ylabel style={font=\color{white!15!black}},
	axis background/.style={fill=white},
	legend style={%
		legend cell align=left, 
		align=left, 
		font=\tiny,
		draw=white!15!black,
		at={(0.7,0.65)},
		anchor=south,},
  ]

					\addplot [color = mycolor1, line width=\lineWidth, mark=o, mark options={ solid, fill=mycolor1} ] table [x=x1, y=y1, col sep=comma]{code/data/example1_error_est/state_estest_basis.csv};
					\addlegendentry{$\Delta_\theta$ (rhs of \eqref{eqn:stateBound:Venergy})}
					
					\addplot [color = mycolor2, line width=\lineWidth, mark=square, mark options={ fill=mycolor2}] table [x=x3, y=y3, col sep=comma] {code/data/example1_error_est/state_estest_basis.csv};
					\addlegendentry{true error $\state$ (lhs of \eqref{eqn:stateBound:Venergy})}
					
					\addplot [color = mycolor3, line width=\lineWidth, mark=triangle, mark options={  fill=mycolor3} ] table [x=x1, y=y1, col sep=comma] {code/data/example1_error_est/adjoint_estest_basis.csv};
					\addlegendentry{$\Delta_p$ (rhs of \eqref{eqn:errorBoundAdjointEnergy})}
					
					
					\addplot [color = mycolor4, line width=\lineWidth, mark=star, mark options={fill=mycolor4}] table [x=x3, y=y3, col sep=comma] 
					{code/data/example1_error_est/adjoint_estest_basis.csv};
					\addlegendentry{true error $p$ (lhs of \eqref{eqn:errorBoundAdjointEnergy})}
					
				\end{axis}
			\end{tikzpicture}
        \subcaption{State and adjoint state error estimator and computed errors}
        \label{fig:ErrorEstStateAdjoint_sub1}
	\end{subfigure}
 	\hfill
    \begin{subfigure}[t]{0.48\textwidth}
        \centering
	    \begin{tikzpicture}
\begin{axis}[%
	width=\imageWidth,
	height=\imageHeight,
	scale only axis,
	scaled ticks=false,
	grid=both,
	grid style={line width=.1pt, draw=gray!10},
	major grid style={line width=.2pt,draw=gray!50},
	axis lines*=left,
	axis line style={line width=\lineWidth},
xmin=-1,
xmax=60,
xlabel style={font=\color{white!15!black}},
xlabel={$r$},,
ymode=log,
ymin=1e-03,
ymax=10,
yminorticks=true,
ylabel style={font=\color{white!15!black}},
	axis background/.style={fill=white},
	legend style={%
		legend cell align=left, 
		align=left, 
		font=\tiny,
		draw=white!15!black,
		at={(0.75,0.80)},
		anchor=south,},
  ]

					\addplot [color = mycolor1, line width=\lineWidth, mark=o, mark options={ solid, fill=mycolor1} ] table [x=x1, y=y1, col sep=comma]{code/data/example1_error_est/state_esteffec_basis.csv};
	      \addlegendentry{\text{eff}$(\Delta_\state)$}
					
					\addplot [color = mycolor2, line width=\lineWidth, mark=square, mark options={ fill=mycolor2}] table [x=x1, y=y1, col sep=comma] {code/data/example1_error_est/adjoint_esteffec_basis.csv};
					\addlegendentry{\text{eff}$(\Delta_\adState)$}

				\end{axis}
			\end{tikzpicture}
        \subcaption{State and adjoint state effectiveness}
        \label{fig:ErrorEstStateAdjoint_sub2}
	\end{subfigure}
	\caption{Decay of the error estimator $\Delta_\theta, \Delta_p$ and effectivities against the basis size $\stateDimRed$ for the open-loop problem}
	\label{fig:ErrorEstStateAdjoint}
\end{figure}
Next, we depict in \Cref{fig:ErrorEstControl} the control error estimates $\Delta_A$, $\Delta_B$, and $\reduce{\Delta}_B$ without initial perturbation. The results show that the expensive-to-evaluate bounds $\Delta_{A}$ and $\Delta_{B}$ are very sharp with effectiveness close to one. This sharpness is because these bounds are generalizations of perturbation bounds from the literature, which are known to perform well for regularization/coercivity parameters $\lambda$ of value around $10^{-2}$; see \cite{bader2016certified, GubV17}. In contrast, the cheap-to-evaluate bound $\reduce{\Delta}_{B}$ has an effectivity around $10^{-3}$. This reduced accuracy is primarily due to the estimate of the output against the state and the estimate in \Cref{cor:errBoundOptimalControl}, where we estimate the $L^2$-energy norm against the weighted norm $\|\cdot\|_{a_\sigma, \omega}$. Despite their lower sharpness, the key advantage of these bounds is their significantly reduced evaluation time compared to $\Delta_{A}$ and $\Delta_{B}$, which we demonstrate in the forthcoming \Cref{fig:mpcExample1TimeBarplot}. 

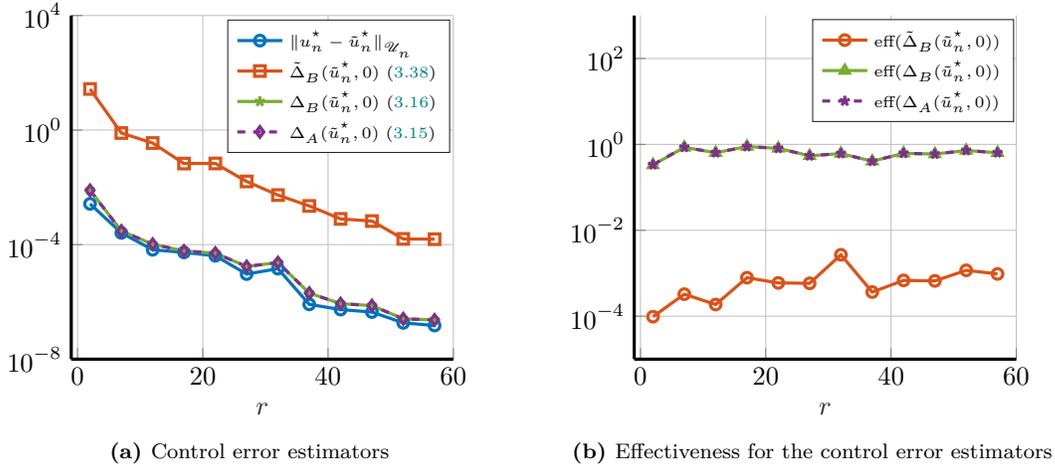
\begin{figure}[!ht]
		\centering{
	\begin{subfigure}[t]{0.48\textwidth}
	    \begin{tikzpicture}
\begin{axis}[%
	width=\imageWidth,
	height=\imageHeight,
	scale only axis,
	scaled ticks=false,
	grid=both,
	grid style={line width=.1pt, draw=gray!10},
	major grid style={line width=.2pt,draw=gray!50},
	axis lines*=left,
	axis line style={line width=\lineWidth},
xmin=-1,
xmax=60,
xlabel style={font=\color{white!15!black}},
xlabel={$r$},,
ymode=log,
ymin=1e-08,
ymax=1e4,
yminorticks=true,
ylabel style={font=\color{white!15!black}},
	axis background/.style={fill=white},
	legend style={%
		legend cell align=left, 
		align=left, 
		font=\tiny,
		draw=white!15!black,
		at={(0.7,0.60)},
		anchor=south,},
  ]

					\addplot [color = mycolor1, line width=\lineWidth, mark=o, mark options={ solid, fill=mycolor1} ] table [x=x1, y=y1, col sep=comma]{code/data/example1_error_est/basissize_control_error_est.csv};
					\addlegendentry{$\|\optInpVarN-\optInpVarRedN \|_{\U_n}$}
					
					\addplot [color = mycolor2, line width=\lineWidth, mark=square, mark options={ fill=mycolor2}] table [x=x2, y=y2, col sep=comma] {code/data/example1_error_est/basissize_control_error_est.csv};
    \addlegendentry{$\reduce \Delta_B(\optInpVarRedN,0)$ \eqref{eq: a post estimate RB realization}}
					
					
					
					\addplot [color = mycolor4, line width=\lineWidth, mark=star, mark options={fill=mycolor4}] table [x=x4, y=y4, col sep=comma] 
		{code/data/example1_error_est/basissize_control_error_est.csv};
     \addlegendentry{$\Delta_{B}(\optInpVarRedN,0)$ \eqref{control:bound:B}}

     \addplot [color = mycolor5, dashed, line width=\lineWidth, mark=diamond, mark options={solid, fill=mycolor5}] table [x=x5, y=y5, col sep=comma] {code/data/example1_error_est/basissize_control_error_est.csv};
     \addlegendentry{$\Delta_{A}(\optInpVarRedN,0)$ \eqref{control:bound:A}}
					
				\end{axis}
			\end{tikzpicture}
     \subcaption{Control error estimators}
     \label{fig:ErrorEstControl_sub1}
	\end{subfigure}
 	\hspace{0.05cm}
 \begin{subfigure}[t]{0.48\textwidth}
	    \begin{tikzpicture}
\begin{axis}[%
	width=\imageWidth,
	height=\imageHeight,
	scale only axis,
	scaled ticks=false,
	grid=both,
	grid style={line width=.1pt, draw=gray!10},
	major grid style={line width=.2pt,draw=gray!50},
	axis lines*=left,
	axis line style={line width=\lineWidth},
xmin=-1,
xmax=60,
xlabel style={font=\color{white!15!black}},
xlabel={$r$},,
ymode=log,
ymin=1e-05,
ymax=1e3,
yminorticks=true,
ylabel style={font=\color{white!15!black}},
	axis background/.style={fill=white},
	legend style={%
		legend cell align=left, 
		align=left, 
		font=\tiny,
		draw=white!15!black,
			at={(0.73,0.69)},
		anchor=south,},
  ]

					\addplot [color = mycolor2, line width=\lineWidth, mark=o, mark options={ solid, fill=mycolor2} ] table [x=x1, y=y1, col sep=comma]{code/data/example1_error_est/basissize_control_eff.csv};
        \addlegendentry{\text{eff}$(\reduce\Delta_{B}(\optInpVarRedN,0))$}
					
					
					\addplot [color = mycolor4, line width=\lineWidth, mark=triangle, mark options={  fill=mycolor4} ] table [x=x3, y=y3, col sep=comma] {code/data/example1_error_est/basissize_control_eff.csv};
        \addlegendentry{\text{eff}$(\Delta_{B}(\optInpVarRedN,0))$}
					
					
					\addplot [color = mycolor5, dashed, line width=\lineWidth, mark=star, mark options={solid, fill=mycolor5}] table [x=x4, y=y4, col sep=comma] 
			{code/data/example1_error_est/basissize_control_eff.csv};
        \addlegendentry{\text{eff}$(\Delta_{A}(\optInpVarRedN,0))$}
					
				\end{axis}
			\end{tikzpicture}
        \subcaption{Effectiveness for the control error estimators}
     \label{fig:ErrorEstControl_sub2}
	\end{subfigure}
 }
	\caption{Decay of the error estimators $\Delta_A, \Delta_B, \reduce{\Delta}_B$ (left) and corresponding effectiveness rate (right) with respect to the basis size $\stateDimRed$ for the open-loop problem}
 \label{fig:ErrorEstControl}
\end{figure}

Finally, we fix the basis size at $\stateDimRed=60$ and examine the propagated error estimation within the \MPC framework (see \Cref{cor:aposterioriBoundMPC}) for the \FOM-\ROM-\MPC and \ROM-\ROM-\MPC algorithms. In \Cref{fig:mpcExample1FOMROMMPCEST,fig:mpcExample1ROMROMMPCEST}, we illustrate the evolution of the propagated control and state error bounds $\Delta_u(\initBound)$ and $\initBound(\Delta_u(\initBoundn{n-1}), \initBoundn{n-1})$ for $\Delta_u \in \{\Delta_A, \Delta_B, \reduce{\Delta}_B\}$ over the first $n=150$ \MPC iterations for both the reduced \MPC schemes. In all cases, the control and state error estimators initially grow rapidly and then stabilize in between the switching intervals, maintaining an overestimation of approximately two or three orders of magnitude. At the switching points, we observe an increase in the error estimators, which can be attributed to the constants $\omega$ in the state error estimator. Concurrently, the true error shows an increasing trend, highlighting the need to incorporate adaptive \ROM updates throughout the \MPC iterations to mitigate error accumulation and improve the overall performance.
\begin{figure}[htbp]
		\centering{
  \begin{subfigure}[t]{0.48\textwidth}
	    \begin{tikzpicture}
\begin{axis}[%
	width=\imageWidth,
	height=\imageHeight,
	scale only axis,
	scaled ticks=false,
	grid=both,
	grid style={line width=.1pt, draw=gray!10},
	major grid style={line width=.2pt,draw=gray!50},
	axis lines*=left,
	axis line style={line width=\lineWidth},
xmin=-5,
xmax=160,
xlabel style={font=\color{white!15!black}},
xlabel={$n$},,
ymode=log,
ymin=4e-06,
ymax=5e1,
yminorticks=true,
ylabel style={font=\color{white!15!black}},
	axis background/.style={fill=white},
	legend style={%
		legend cell align=left, 
		align=left, 
		font=\tiny,
		draw=white!15!black,
		at={(0.65,0.4)},
		anchor=north,},
  ]

					\addplot [color = mycolor1, line width=\lineWidth] table [x=x1, y=y1, col sep=comma]{code/data/example1_error_est/FOMROMcontrol_pretr.csv};
    \addlegendentry{$\Delta_A(\optInpVarRedN,\initBound)$}
					
					\addplot [color = mycolor2,dashed, line width=\lineWidth] table [x=x2, y=y2, col sep=comma] {code/data/example1_error_est/FOMROMcontrol_pretr.csv};
    \addlegendentry{$\Delta_B(\optInpVarRedN,\initBound)$}
					
					
					
					\addplot [color = mycolor4, dash dot, line width=\lineWidth] table [x=x4, y=y4, col sep=comma] 
	{code/data/example1_error_est/FOMROMcontrol_pretr.csv};
        \addlegendentry{$\reduce \Delta_{B}(\optInpVarRedN,\initBound)$}

      \addplot [color = mycolor5, dash dot dot, line width=\lineWidth] table [x=x5, y=y5, col sep=comma] {code/data/example1_error_est/FOMROMcontrol_pretr.csv};
        \addlegendentry{$\|\intermediateInpVarMPC - \inpVarMPC\|_{\U_n}$}
					
				\end{axis}
			\end{tikzpicture}
     \subcaption{\FOM-\ROM control error}
     \label{fig:mpcExample1FOMROMMPCEST_sub1}
	\end{subfigure}
 	\hspace{0.05cm}
 \begin{subfigure}[t]{0.48\textwidth}
	    \begin{tikzpicture}
\begin{axis}[%
	width=\imageWidth,
	height=\imageHeight,
	scale only axis,
	scaled ticks=false,
	grid=both,
	grid style={line width=.1pt, draw=gray!10},
	major grid style={line width=.2pt,draw=gray!50},
	axis lines*=left,
	axis line style={line width=\lineWidth},
xmin=-5,
xmax=160,
xlabel style={font=\color{white!15!black}},
xlabel={$n$},,
ymode=log,
ymin=3e-04,
ymax=2e0,
yminorticks=true,
ylabel style={font=\color{white!15!black}},
	axis background/.style={fill=white},
	legend style={%
		legend cell align=left, 
		align=left, 
		font=\tiny,
		draw=white!15!black,
		at={(0.65,0.65)},
		anchor=south,},
  ]

					\addplot [color = mycolor1, line width=\lineWidth] table [x=x1, y=y1, col sep=comma]{code/data/example1_error_est/FOMROMeffect_control_pretr.csv};
    \addlegendentry{\text{eff}$(\Delta_A(\optInpVarRedN,\initBound))$}
					
					\addplot [color = mycolor2,dashed, line width=\lineWidth] table [x=x2, y=y2, col sep=comma] {code/data/example1_error_est/FOMROMeffect_control_pretr.csv};
     \addlegendentry{\text{eff}$(\Delta_B(\optInpVarRedN,\initBound))$}
					
					
					
					\addplot [color = mycolor4, dash dot, line width=\lineWidth] table [x=x4, y=y4, col sep=comma] 
	{code/data/example1_error_est/FOMROMeffect_control_pretr.csv};
        \addlegendentry{\text{eff}$(\reduce \Delta_{B}(\optInpVarRedN,\initBound))$}

				\end{axis}
			\end{tikzpicture}
        \subcaption{\FOM-\ROM control error effectiveness}
     \label{fig:mpcExample1FOMROMMPCEST_sub2}
	\end{subfigure}
	\begin{subfigure}[t]{0.48\textwidth}
	    \begin{tikzpicture}
\begin{axis}[%
	width=\imageWidth,
	height=\imageHeight,
	scale only axis,
	scaled ticks=false,
	grid=both,
	grid style={line width=.1pt, draw=gray!10},
	major grid style={line width=.2pt,draw=gray!50},
	axis lines*=left,
	axis line style={line width=\lineWidth},
xmin=-5,
xmax=160,
xlabel style={font=\color{white!15!black}},
xlabel={$n$},,
ymode=log,
ymin=3e-06,
ymax=1e2,
yminorticks=true,
ylabel style={font=\color{white!15!black}},
	axis background/.style={fill=white},
	legend style={%
		legend cell align=left, 
		align=left, 
		font=\tiny,
		draw=white!15!black,
		at={(0.6,0.4)},
		anchor=north,},
  ]

					\addplot [color = mycolor1, line width=\lineWidth] table [x=x1, y=y1, col sep=comma]{code/data/example1_error_est/FOMROMstate_pretr.csv};
     \addlegendentry{$\initBound(\Delta_A)$}
					
					\addplot [color = mycolor2,dashed, line width=\lineWidth] table [x=x2, y=y2, col sep=comma] {code/data/example1_error_est/FOMROMstate_pretr.csv};
     \addlegendentry{$\initBound(\Delta_B)$}
					
					
					
					\addplot [color = mycolor4, dash dot, line width=\lineWidth] table [x=x4, y=y4, col sep=comma] 
	{code/data/example1_error_est/FOMROMstate_pretr.csv};
         \addlegendentry{$\initBound(\reduce \Delta_{B})$}

      \addplot [color = mycolor5, dash dot dot, line width=\lineWidth] table [x=x5, y=y5, col sep=comma] {code/data/example1_error_est/FOMROMstate_pretr.csv};
        \addlegendentry{true error (lhs of \eqref{eqn:optStateBound:MPC:FOMROM_terminal})}
					
				\end{axis}
			\end{tikzpicture}
     \subcaption{\FOM-\ROM state error, see \eqref{eq:D_init_FOMROM}}
     \label{fig:mpcExample1FOMROMMPCEST_sub3}
	\end{subfigure}
 	\hspace{0.05cm}
 \begin{subfigure}[t]{0.48\textwidth}
	    \begin{tikzpicture}
\begin{axis}[%
	width=\imageWidth,
	height=\imageHeight,
	scale only axis,
	scaled ticks=false,
	grid=both,
	grid style={line width=.1pt, draw=gray!10},
	major grid style={line width=.2pt,draw=gray!50},
	axis lines*=left,
	axis line style={line width=\lineWidth},
xmin=-5,
xmax=160,
xlabel style={font=\color{white!15!black}},
xlabel={$n$},,
ymode=log,
ymin=1e-05,
ymax=2e0,
yminorticks=true,
ylabel style={font=\color{white!15!black}},
	axis background/.style={fill=white},
	legend style={%
		legend cell align=left, 
		align=left, 
		font=\tiny,
		draw=white!15!black,
		at={(0.7,0.55)},
		anchor=south,},
  ]

					\addplot [color = mycolor1, line width=\lineWidth] table [x=x1, y=y1, col sep=comma]{code/data/example1_error_est/FOMROMeffect_state_pretr.csv};
    \addlegendentry{\text{eff}$(\initBound(\Delta_A))$}
					
					\addplot [color = mycolor2,dashed, line width=\lineWidth] table [x=x2, y=y2, col sep=comma] {code/data/example1_error_est/FOMROMeffect_state_pretr.csv};
     \addlegendentry{\text{eff}$(\initBound(\Delta_B))$}
					
					
					
					\addplot [color = mycolor4, dash dot, line width=\lineWidth] table [x=x4, y=y4, col sep=comma] 
	{code/data/example1_error_est/FOMROMeffect_state_pretr.csv};
         \addlegendentry{\text{eff}$(\initBound(\reduce\Delta_{B}))$}

				\end{axis}
			\end{tikzpicture}
        \subcaption{\FOM-\ROM state error effectiveness}
     \label{fig:mpcExample1FOMROMMPCEST_sub4}
	\end{subfigure}
 }
	\caption{\FOM-\ROM-\MPC error estimators (left) and corresponding effectiveness rate (right) over the \MPC steps $n$}
	\label{fig:mpcExample1FOMROMMPCEST}
\end{figure}
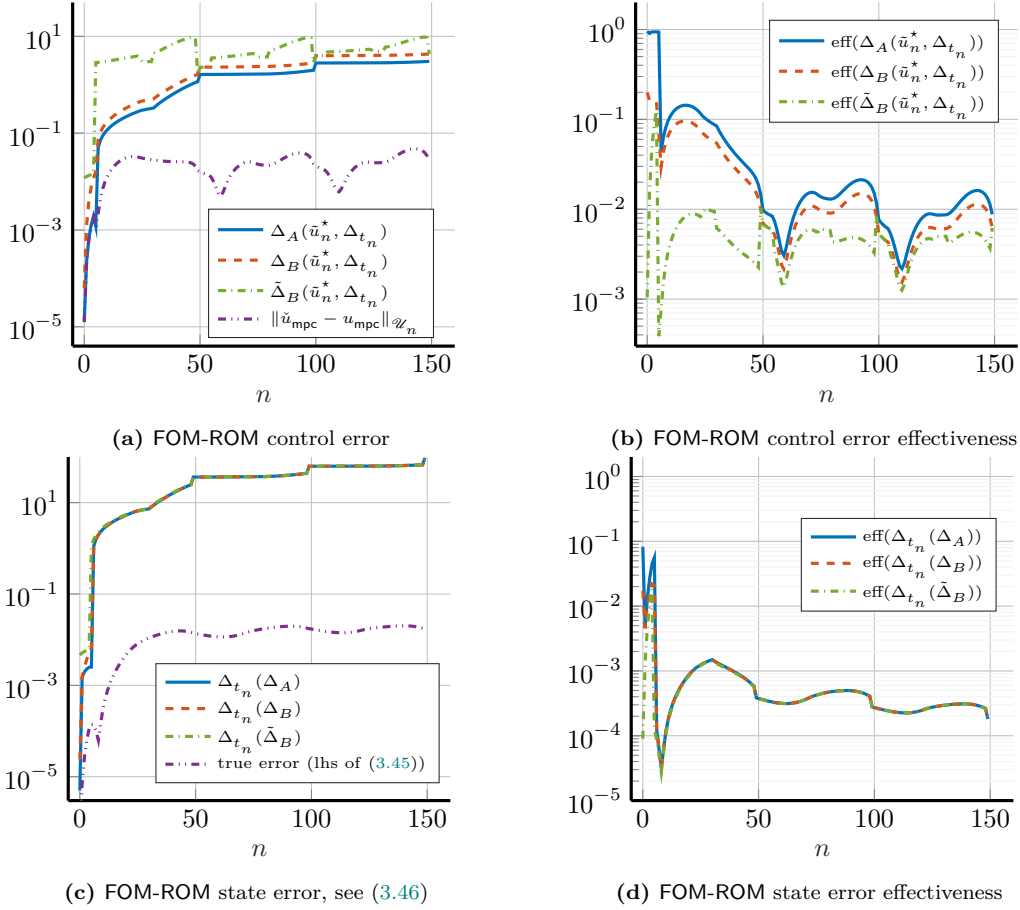
\begin{figure}[!ht]
		\centering{
  \begin{subfigure}[t]{0.48\textwidth}
	    \begin{tikzpicture}
\begin{axis}[%
	width=\imageWidth,
	height=\imageHeight,
	scale only axis,
	scaled ticks=false,
	grid=both,
	grid style={line width=.1pt, draw=gray!10},
	major grid style={line width=.2pt,draw=gray!50},
	axis lines*=left,
	axis line style={line width=\lineWidth},
xmin=-5,
xmax=160,
xlabel style={font=\color{white!15!black}},
xlabel={$n$},,
ymode=log,
ymin=1e-05,
ymax=5e1,
yminorticks=true,
ylabel style={font=\color{white!15!black}},
	axis background/.style={fill=white},
	legend style={%
		legend cell align=left, 
		align=left, 
		font=\tiny,
		draw=white!15!black,
		at={(0.69,0.05)},
		anchor=south,},
  ]

					\addplot [color = mycolor1, line width=\lineWidth] table [x=x1, y=y1, col sep=comma]{code/data/example1_error_est/ROMROMcontrol_pretr.csv};
         \addlegendentry{$\Delta_A(\optInpVarRedN,\initBound)$}   
					
					\addplot [color = mycolor2,dashed, line width=\lineWidth] table [x=x2, y=y2, col sep=comma] {code/data/example1_error_est/ROMROMcontrol_pretr.csv};
    \addlegendentry{$\Delta_B(\optInpVarRedN,\initBound)$}
					
					
					
					\addplot [color = mycolor4, dash dot, line width=\lineWidth] table [x=x4, y=y4, col sep=comma] 
	{code/data/example1_error_est/ROMROMcontrol_pretr.csv};
        \addlegendentry{$\tilde \Delta_{B}(\optInpVarRedN,\initBound)$}

      \addplot [color = mycolor5, dash dot dot, line width=\lineWidth] table [x=x5, y=y5, col sep=comma] {code/data/example1_error_est/ROMROMcontrol_pretr.csv};
        \addlegendentry{$\|\inpVarRedMPC - \inpVarMPC\|_{\U_n}$}
					
				\end{axis}
			\end{tikzpicture}
     \subcaption{\ROM-\ROM control error}
     \label{fig:mpcExample1ROMROMMPCEST_sub1}
	\end{subfigure}
 	\hspace{0.05cm}
 \begin{subfigure}[t]{0.48\textwidth}
	    \begin{tikzpicture}
\begin{axis}[%
	width=\imageWidth,
	height=\imageHeight,
	scale only axis,
	scaled ticks=false,
	grid=both,
	grid style={line width=.1pt, draw=gray!10},
	major grid style={line width=.2pt,draw=gray!50},
	axis lines*=left,
	axis line style={line width=\lineWidth},
xmin=-5,
xmax=160,
xlabel style={font=\color{white!15!black}},
xlabel={$n$},,
ymode=log,
ymin=2e-04,
ymax=2e0,
yminorticks=true,
ylabel style={font=\color{white!15!black}},
	axis background/.style={fill=white},
	legend style={%
		legend cell align=left, 
		align=left, 
		font=\tiny,
		draw=white!15!black,
		at={(0.68,0.65)},
		anchor=south,},
  ]

					\addplot [color = mycolor1, line width=\lineWidth] table [x=x1, y=y1, col sep=comma]{code/data/example1_error_est/ROMROMeffect_control_pretr.csv};
    \addlegendentry{\text{eff}$(\Delta_A(\optInpVarRedN,\initBound))$}
					
					\addplot [color = mycolor2,dashed, line width=\lineWidth] table [x=x2, y=y2, col sep=comma] {code/data/example1_error_est/ROMROMeffect_control_pretr.csv};
     \addlegendentry{\text{eff}$(\Delta_B(\optInpVarRedN,\initBound))$}
					
					
					
					\addplot [color = mycolor4, dash dot, line width=\lineWidth] table [x=x4, y=y4, col sep=comma] 
	{code/data/example1_error_est/ROMROMeffect_control_pretr.csv};
        \addlegendentry{\text{eff}$(\tilde \Delta_{B}(\optInpVarRedN,\initBound))$}

				\end{axis}
			\end{tikzpicture}
        \subcaption{\ROM-\ROM control error effectiveness}
     \label{fig:mpcExample1ROMROMMPCEST_sub2}
	\end{subfigure}
	\begin{subfigure}[t]{0.48\textwidth}
	    \begin{tikzpicture}
\begin{axis}[%
	width=\imageWidth,
	height=\imageHeight,
	scale only axis,
	scaled ticks=false,
	grid=both,
	grid style={line width=.1pt, draw=gray!10},
	major grid style={line width=.2pt,draw=gray!50},
	axis lines*=left,
	axis line style={line width=\lineWidth},
xmin=-5,
xmax=160,
xlabel style={font=\color{white!15!black}},
xlabel={$n$},,
ymode=log,
ymin=1e-03,
ymax=1e2,
yminorticks=true,
ylabel style={font=\color{white!15!black}},
	axis background/.style={fill=white},
	legend style={%
		legend cell align=left, 
		align=left, 
		font=\tiny,
		draw=white!15!black,
		at={(0.60,0.05)},
		anchor=south,},
  ]

					\addplot [color = mycolor1, line width=\lineWidth] table [x=x1, y=y1, col sep=comma]{code/data/example1_error_est/ROMROMstate_pretr.csv};
     \addlegendentry{$\initBound(\Delta_A)$}
					
					\addplot [color = mycolor2,dashed, line width=\lineWidth] table [x=x2, y=y2, col sep=comma] {code/data/example1_error_est/ROMROMstate_pretr.csv};
     \addlegendentry{$\initBound(\Delta_B)$}
					
					
					
					\addplot [color = mycolor4, dash dot, line width=\lineWidth] table [x=x4, y=y4, col sep=comma] 
	{code/data/example1_error_est/ROMROMstate_pretr.csv};
         \addlegendentry{$\initBound(\tilde \Delta_{B})$}

      \addplot [color = mycolor5, dash dot dot, line width=\lineWidth] table [x=x5, y=y5, col sep=comma] {code/data/example1_error_est/ROMROMstate_pretr.csv};
        \addlegendentry{true error (lhs of \eqref{eqn:optStateBound:MPC:ROMROM_terminal})}
					
				\end{axis}
			\end{tikzpicture}
     \subcaption{\ROM-\ROM state error, see \eqref{eq:D_init_ROMROM}}
     \label{fig:mpcExample1ROMROMMPCEST_sub3}
	\end{subfigure}
 	\hspace{0.05cm}
 \begin{subfigure}[t]{0.48\textwidth}
	    \begin{tikzpicture}
\begin{axis}[%
	width=\imageWidth,
	height=\imageHeight,
	scale only axis,
	scaled ticks=false,
	grid=both,
	grid style={line width=.1pt, draw=gray!10},
	major grid style={line width=.2pt,draw=gray!50},
	axis lines*=left,
	axis line style={line width=\lineWidth},
    xmin=-5,
    xmax=160,
    xlabel style={font=\color{white!15!black}},
    xlabel={$n$},,
    ymode=log,
    ymin=3e-03,
    ymax=2e0,
    yminorticks=true,
    ylabel style={font=\color{white!15!black}},
	axis background/.style={fill=white},
	legend style={%
		legend cell align=left, 
		align=left, 
		font=\tiny,
		draw=white!15!black,
		at={(0.7,0.50)},
		anchor=south,},
  ]

					\addplot [color = mycolor1, line width=\lineWidth] table [x=x1, y=y1, col sep=comma]{code/data/example1_error_est/ROMROMeffect_state_pretr.csv};
    \addlegendentry{\text{eff}$(\initBound(\Delta_A))$}
					
					\addplot [color = mycolor2,dashed, line width=\lineWidth] table [x=x2, y=y2, col sep=comma] {code/data/example1_error_est/ROMROMeffect_state_pretr.csv};
     \addlegendentry{\text{eff}$(\initBound(\Delta_B))$}
					
					
					
					\addplot [color = mycolor4, dash dot, line width=\lineWidth] table [x=x4, y=y4, col sep=comma] 
	{code/data/example1_error_est/ROMROMeffect_state_pretr.csv};
         \addlegendentry{\text{eff}$(\initBound(\tilde \Delta_{B}))$}

				\end{axis}
			\end{tikzpicture}
        \subcaption{\ROM-\ROM state error effectiveness}
     \label{fig:mpcExample1ROMROMMPCEST_sub4}
	\end{subfigure}
 }
	\caption{\ROM-\ROM-\MPC error estimators (left) and corresponding effectiveness rate (right) over the \MPC steps $n$}
	\label{fig:mpcExample1ROMROMMPCEST}
\end{figure}
\subsection{Comparison of the MPC schemes}
\label{subsec:numerics:MPC}

%
In this section, we compare the \FOM-\MPC (\Cref{alg:FOM-MPC}), the \FOM-\ROM-\MPC (\Cref{alg:ROM-MPC}) with expensive-to-evaluate error estimator $\Delta_A$, and the \ROM-\ROM-\MPC (\Cref{alg:ROMROM-MPC}) with cheap-to-evaluate error estimator $\reduce \Delta_{B}$ in terms of computational time and approximation quality for several choices of the tolerance parameter $\varepsilon$. \ReA{We provide a computational complexity analysis of the algorithms in \Cref{app:sec_complexity}.} We observe that one could also employ the \FOM-\ROM-\MPC (\Cref{alg:ROM-MPC}) with the computationally cheaper error estimator $\reduce\Delta_{B}$. However, the results we found were similar to those of the \ROM-\ROM-\MPC. We compare the following relative errors in state and control
\begin{align*}
    e_\state \vcentcolon= &\ \frac{\|\stateMPC-{\stateRedMPC}\|_{L^2(0,\bar T, V)}}{\|\stateMPC\|_{L^2(0,\bar T, V)}}, \quad e_u \vcentcolon= \frac{\|\inpVarMPC-\inpVarRedMPC\|_{\U}}{\|\inpVarMPC\|_{\U}},
\end{align*}
and in the output, and the obtained cost functional value
\begin{align*}
    e_y \vcentcolon= &\ \frac{\|\outVarMPC-{\outVarRedMPC}\|_{L^2(0,\bar T, \R^{\outVarDim})}}{\|\outVarMPC\|_{L^2(0,\bar T, \R^{\outVarDim})}}, \quad e_{ J}\vcentcolon= \ \frac{\big|J(\outVarMPC, \inpVarMPC) -J(\outVarRedMPC, \inpVarRedMPC)\big|}{J(\outVarMPC, \inpVarMPC)},
\end{align*}
The results from the previous subsection motivate us to consider uniform tolerances with respect to $n$, specifically $\varepsilon=\epsilon\in\{10^{-2},10^{-3} \}$. The results are depicted in \Cref{fig:mpcExample1table}. For the choice $\varepsilon=10^{-2}$, we observe a speed-up factor of $11$ ($4$) for the \ROM-\ROM-\MPC (\FOM-\ROM-\MPC) scheme. The reason for the difference is mainly due to the cost of evaluating the error estimators. On the other hand, due to the different effectiveness rates of the cheap and expensive error estimators, we observe that the average reduced basis sizes of the \ROM-\ROM-\MPC (80 and 98) are significantly larger than for the \FOM-\ROM-\MPC (61 and 71). Further, we observe that for the tolerance $\varepsilon=10^{-3}$ the speedup of the \ROM-\ROM scheme reduces to 7.4, while the speedup of the \FOM-\ROM scheme remains constant. The reason for this is the high number of model updates (10 compared to 3) due to the fast increase of the error estimator in this case. Since the \ROM-\ROM-\MPC needs more \ROM updates and has a richer basis size, the errors in the output, control, and cost functional value are smaller than for the \FOM-\ROM-\MPC algorithm. Nevertheless, it is remarkable that the \FOM-\ROM-\MPC state admits a better true error with fewer basis elements and fewer basis updates than the \ROM-\ROM-\MPC algorithm.

In \Cref{fig:mpcExample1ErrorEstimation}, we display the control and propagated trajectory error estimators that trigger the \ROM updates over $n$. We note that computationally inexpensive error estimators accumulate faster than expensive ones, leading to a larger number of \ROM updates. We also observe an increase in the error and error estimators if the model switches, which is due to the switching constants $\omega$ in \Cref{thm:errorBoundStateEnergy} and \ref{thm:errorBoundAdjointEnergy}.

In conclusion, an in-depth analysis of the computational times is given in \Cref{fig:mpcExample1TimeBarplot}. In this case, we can appreciate the advantage of employing computationally inexpensive error estimators. Indeed, the evaluation of the expensive error estimators takes more than half of the time of the \FOM-\ROM-\MPC algorithm, while for the cheap error estimators, this constitutes only a small fraction of the total computational time of the entire \ROM-\ROM-\MPC procedure. We observe an increase in the elapsed time for the \ROM updates for the offline assembly of the error estimate \ReA{(see also \eqref{eq:app:costupdate})}, which, however, is very contained and does not affect the efficiency of the procedure. \ReA{This is in accordance with the complexity analysis of \FOM-\ROM-\MPC and \ROM-\ROM-\MPC provided in \Cref{app:sec_complexity}; see \eqref{eq:app:costFOMROM} and \eqref{eq:app:costROMROM}.}
\begin{table}[t!] 
    \scriptsize
	\centering 
	\begin{tabular}{lcccccccc}\toprule
		Algorithm & $e_u$ & $e_\state$ & $e_y$ & $e_J$ &\makecell{ time [s] \\ (min/avg./max)} & \makecell{speed-up\\(min/avg./max) } & avg. $\stateDimRed$ & \makecell{\#\ROM \\updates} \\ 
  \midrule
		\FOM & -  & -  &  - & - &151/154/158 & - & - & - \\
     \midrule
     $\varepsilon=\epsilon=10^{-2}$ &  &   &   &  &&  &  &  \\
      \midrule
		\FOM-\ROM   &$6.1\cdot 10^{-4}$ & $7.7\cdot 10^{-5}$ & $7.5\cdot 10^{-5}$ & $7.7\cdot 10^{-5}$ & 35/36/37 &4.0/4.1/4.4&61&2\\ 
        \ROM-\ROM   &$4.4\cdot 10^{-6}$ & $1.9\cdot 10^{-4}$ & $9.7\cdot 10^{-7}$ & $7.1\cdot 10^{-7}$ & 15/16/18 &9/11/13.7&80&5\\
        \midrule
     $\varepsilon=\epsilon=10^{-3}$ &  &   &   &  &&  &  &  \\
      \midrule
		\FOM-\ROM   &$6.3\cdot 10^{-5}$ & $7.6\cdot 10^{-6}$ & $7.5\cdot 10^{-6}$ & $3.1\cdot 10^{-7}$ &  36/38/39 &3.9/4.1/4.3&71&3\\ 
        \ROM-\ROM  &$5.3\cdot 10^{-7}$ & $8.4\cdot 10^{-6}$ & $1.3\cdot 10^{-8}$ & $1.2\cdot 10^{-9}$ & 19/20/21 &7.2/7.7/8.3&98&10\\
		\bottomrule
	\end{tabular}
     \caption{\ReA{Performance comparison of the MPC schemes for update tolerances $(\varepsilon,\epsilon)\in\{ (10^{-2},10^{-2}), (10^{-3},10^{-3})\}$}}
	\label{fig:mpcExample1table}
\end{table}
%
\begin{figure}[!ht]
		\centering{
  \begin{subfigure}[t]{0.48\textwidth}
	    \begin{tikzpicture}
\begin{axis}[%
	width=\imageWidth,
	height=\imageHeight,
	scale only axis,
	scaled ticks=false,
	grid=both,
	grid style={line width=.1pt, draw=gray!10},
	major grid style={line width=.2pt,draw=gray!50},
	axis lines*=left,
	axis line style={line width=\lineWidth},
xmin=-10,
xmax=500,
xlabel style={font=\color{white!15!black}},
xlabel={$n$},,
ymode=log,
ymin=3e-07,
ymax=5e-2,
yminorticks=true,
ylabel style={font=\color{white!15!black}},
	axis background/.style={fill=white},
	legend style={%
		legend cell align=left, 
		align=left, 
		font=\tiny,
		draw=white!15!black,
		at={(0.7,0.6)},
		anchor=south,},
  ]

					\addplot [color = mycolor1, dash dot, line width=\lineWidth] table [x=x1, y=y1, col sep=comma]{code/data/example1_mpc/FOMROM_expensive_control_est.csv};
    \addlegendentry{$\Delta_A( \optInpVarRedN,0)$}
					
					\addplot [color = mycolor2,dashed, line width=\lineWidth] table [x=x2, y=y2, col sep=comma] {code/data/example1_mpc/FOMROM_expensive_control_est.csv};
    \addlegendentry{$\Delta_A(\optInpVarRedN,\initBound)$}
					
					\addplot [color = mycolor5, line width=1.1*\lineWidth ] table [x=x3, y=y3, col sep=comma] {code/data/example1_mpc/FOMROM_expensive_control_est.csv};
    \addlegendentry{$\varepsilon$}
					
					
					 \addplot[domain=3e-07:0.01, color=black, dotted, line width=\lineWidth ] ({0}, x);
    \addplot[domain=3e-07:0.01, color= black, dotted, line width=\lineWidth ] ({13}, x);
    \addlegendentry{\ROM updates}
					
				\end{axis}
			\end{tikzpicture}
     \subcaption{\FOM-\ROM-\MPC control error estimator}
     \label{fig:mpcExample1ErrorEst_sub1}
	\end{subfigure}
 	\hspace{0.05cm}
 \begin{subfigure}[t]{0.48\textwidth}
	    \begin{tikzpicture}
\begin{axis}[%
	width=\imageWidth,
	height=\imageHeight,
	scale only axis,
	scaled ticks=false,
	grid=both,
	grid style={line width=.1pt, draw=gray!10},
	major grid style={line width=.2pt,draw=gray!50},
	axis lines*=left,
	axis line style={line width=\lineWidth},
xmin=-10,
xmax=500,
xlabel style={font=\color{white!15!black}},
xlabel={$n$},,
ymode=log,
ymin=3e-07,
ymax=5e-2,
yminorticks=true,
ylabel style={font=\color{white!15!black}},
	axis background/.style={fill=white},
	legend style={%
		legend cell align=left, 
		align=left, 
		font=\tiny,
		draw=white!15!black,
		at={(0.45,0.05)},
		anchor=south,},
  ]

					\addplot [color = mycolor1, dashed, line width=\lineWidth] table [x=x1, y=y1, col sep=comma]{code/data/example1_mpc/FOMROM_expensive_mpctraj_est.csv};
    \addlegendentry{$\initBound$}
					
					\addplot [color = mycolor5, line width=1.1*\lineWidth] table [x=x2, y=y2, col sep=comma] {code/data/example1_mpc/FOMROM_expensive_mpctraj_est.csv};
    \addlegendentry{$\varepsilon$}

					
					 \addplot[domain=3e-07:0.01, color=black, dotted, line width=\lineWidth ] ({0}, x);
    \addplot[domain=3e-07:0.01, color= black, dotted, line width=\lineWidth ] ({13}, x);
    \addlegendentry{\ROM updates}
					
				\end{axis}
			\end{tikzpicture}
        \subcaption{\FOM-\ROM-\MPC trajectory error estimator}
     \label{fig:mpcExample1ErrorEst_sub2}
	\end{subfigure}
	\begin{subfigure}[t]{0.48\textwidth}
	    \begin{tikzpicture}
\begin{axis}[%
	width=\imageWidth,
	height=\imageHeight,
	scale only axis,
	scaled ticks=false,
	grid=both,
	grid style={line width=.1pt, draw=gray!10},
	major grid style={line width=.2pt,draw=gray!50},
	axis lines*=left,
	axis line style={line width=\lineWidth},
xmin=-10,
xmax=500,
xlabel style={font=\color{white!15!black}},
xlabel={$n$},,
ymode=log,
ymin=1e-05,
ymax=5e-1,
yminorticks=true,
ylabel style={font=\color{white!15!black}},
	axis background/.style={fill=white},
	legend style={%
		legend cell align=left, 
		align=left, 
		font=\tiny,
		draw=white!15!black,
		at={(0.70,0.65)},
		anchor=south,},
  ]

					\addplot [color = mycolor1, dash dot, line width=\lineWidth] table [x=x1, y=y1, col sep=comma]{code/data/example1_mpc/ROMROM_cheap_control_est.csv};
   \addlegendentry{$\reduce \Delta_{B}(\optInpVarRedN,0)$}

					\addplot [color = mycolor2,dashed, line width=\lineWidth] table [x=x2, y=y2, col sep=comma] {code/data/example1_mpc/ROMROM_cheap_control_est.csv};
   \addlegendentry{$\reduce \Delta_{B}(\optInpVarRedN,\initBound)$}
					
					\addplot [color = mycolor5, line width=1.1*\lineWidth ] table [x=x3, y=y3, col sep=comma] {code/data/example1_mpc/ROMROM_cheap_control_est.csv};
   \addlegendentry{$\varepsilon$}
					
					
					 \addplot[domain=0.00001:0.01, color=black, dotted, line width=\lineWidth ] ({0}, x);
    \addplot[domain=0.00001:0.01, color=black, dotted, line width=\lineWidth ] ({5}, x);
    \addplot[domain=0.00001:0.01, color=black, dotted, line width=\lineWidth ] ({9}, x);
    \addplot[domain=0.00001:0.01, color=black, dotted, line width=\lineWidth ] ({199}, x);
    \addplot[domain=0.00001:0.01, color=black, dotted, line width=\lineWidth ] ({349}, x);
    \addlegendentry{\ROM updates}
					
				\end{axis}
			\end{tikzpicture}
     \subcaption{\ROM-\ROM-\MPC control error estimator}
     \label{fig:mpcExample1ErrorEst_sub3}
	\end{subfigure}
 	\hspace{0.05cm}
 \begin{subfigure}[t]{0.48\textwidth}
	    \begin{tikzpicture}
\begin{axis}[%
	width=\imageWidth,
	height=\imageHeight,
	scale only axis,
	scaled ticks=false,
	grid=both,
	grid style={line width=.1pt, draw=gray!10},
	major grid style={line width=.2pt,draw=gray!50},
	axis lines*=left,
	axis line style={line width=\lineWidth},
xmin=-10,
xmax=500,
xlabel style={font=\color{white!15!black}},
xlabel={$n$},,
ymode=log,
ymin=1e-5,
ymax=1e-1,
yminorticks=true,
ylabel style={font=\color{white!15!black}},
	axis background/.style={fill=white},
	legend style={%
		legend cell align=left, 
		align=left, 
		font=\tiny,
		draw=white!15!black,
		at={(0.70,0.78)},
		anchor=south,},
  ]

					\addplot [color = mycolor1, dashed, line width=\lineWidth] table [x=x1, y=y1, col sep=comma]{code/data/example1_mpc/ROMROM_cheap_mpctraj_est.csv};
    \addlegendentry{$\initBound$}
					
					\addplot [color = mycolor5, line width=1.1*\lineWidth] table [x=x2, y=y2, col sep=comma] {code/data/example1_mpc/ROMROM_cheap_mpctraj_est.csv};
    \addlegendentry{$\varepsilon$}


					 \addplot[domain=0.00001:0.01, color=black, dotted, line width=\lineWidth ] ({0}, x);
    \addplot[domain=0.00001:0.01, color=black, dotted, line width=\lineWidth ] ({5}, x);
    \addplot[domain=0.00001:0.01, color=black, dotted, line width=\lineWidth ] ({9}, x);
    \addplot[domain=0.00001:0.01, color=black, dotted, line width=\lineWidth ] ({199}, x);
    \addplot[domain=0.00001:0.01, color=black, dotted, line width=\lineWidth ] ({349}, x);
    \addlegendentry{\ROM updates}
    
				\end{axis}
			\end{tikzpicture}
        \subcaption{\ROM-\ROM-\MPC trajectory error estimator}
     \label{fig:mpcExample1ErrorEst_sub4}
	\end{subfigure}
 }
	\caption{Error estimators and \ROM updates over the \MPC steps $n$ for $\varepsilon=\epsilon = 10^{-2}$}
	\label{fig:mpcExample1ErrorEstimation}
\end{figure}
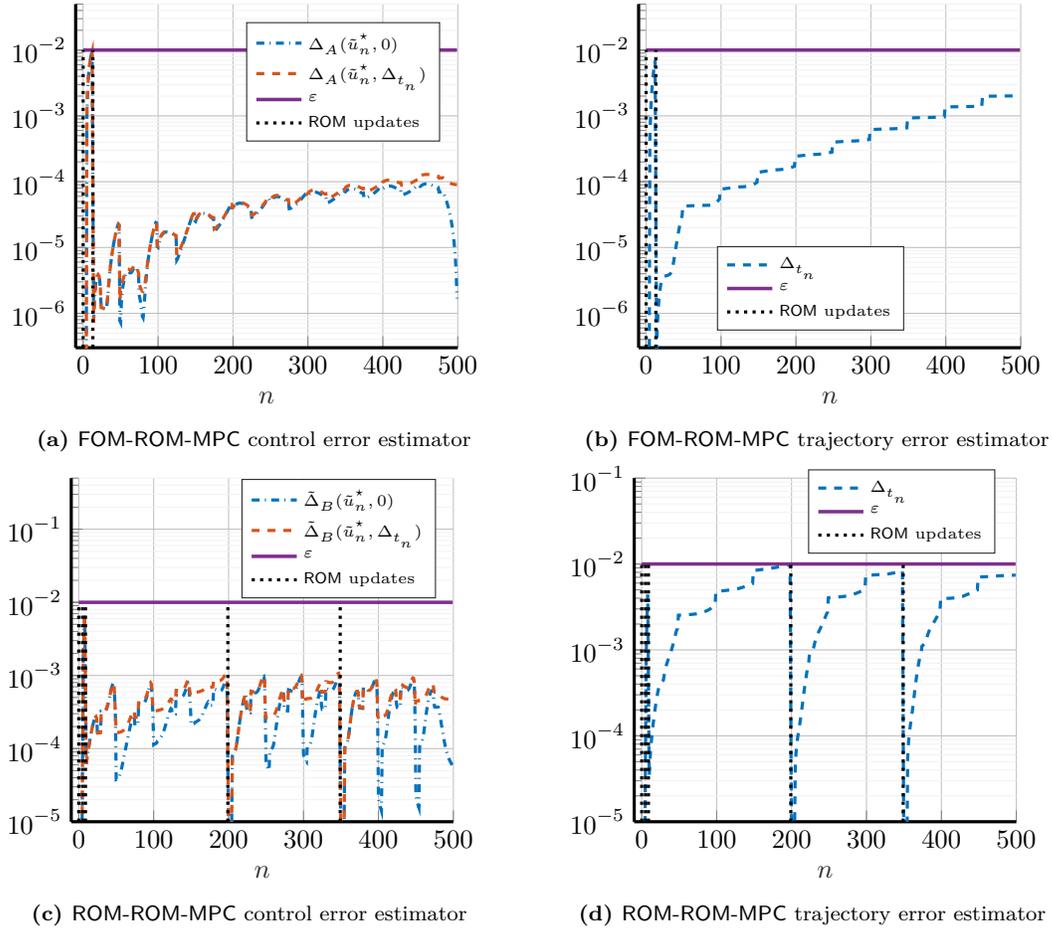
\begin{figure}[!ht]
\centering
\begin{tikzpicture}
\begin{axis} [xbar,
    bar width = 6pt,
    xmin = 0, 
    xmax = 80, 
     width=2.5*\imageWidth,
	height=1.5*\imageHeight,
    enlarge y limits={abs=0.6cm},
    enlarge x limits = {value = .25, upper},
    x tick label style={
		font=\tiny, 
        /pgf/number format/1000 sep=},
    xlabel = {time [s]},
    symbolic y coords = {total,\ROM subproblem,\FOM subproblem, error estimation,\ROM updates},
    legend style={%
		legend cell align=left,
		font=\small,
		draw=white!15!black,
  },
    ]
    \addplot [fill=mycolor1]  coordinates{
    (155,total) (0,\ROM subproblem) (155,\FOM subproblem) (0,error estimation) (0,\ROM updates)};
    
    \addplot [fill=mycolor2] coordinates{
    (35,total) (9,\ROM subproblem) (1,\FOM subproblem) (22,error estimation) (0.3,\ROM updates)};
    
    \addplot [fill=mycolor3] coordinates{
    (15,total) (10.5,\ROM subproblem) (1.2,\FOM subproblem) (1.3,error estimation) (2.4,\ROM updates)};
    \legend{\FOM-\MPC,\FOM-\ROM-\MPC,\ROM-\ROM-\MPC}
\end{axis}
\end{tikzpicture}
\caption{Overview of the elapsed time for the different \MPC schemes for $\varepsilon=\epsilon=10^{-2}$}
\label{fig:mpcExample1TimeBarplot}
\end{figure}
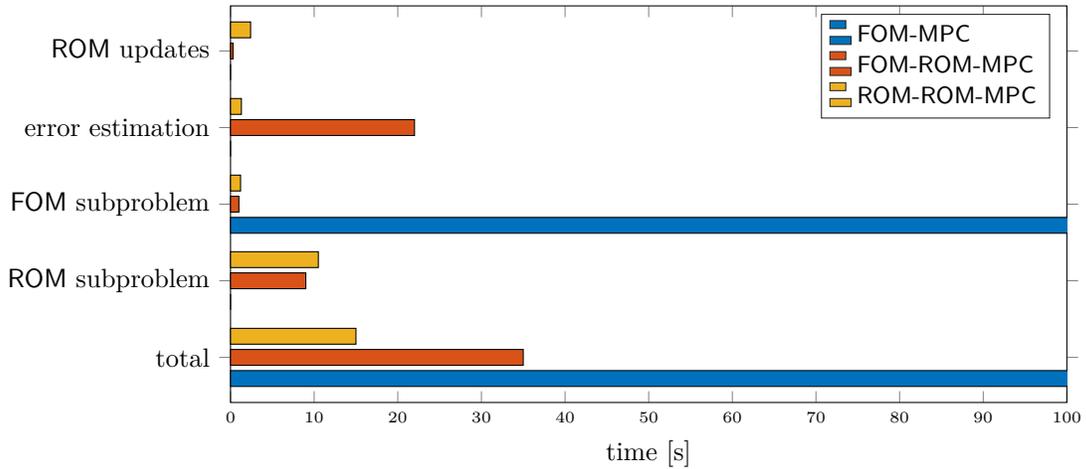
%
%
\ReA{
\subsection{Sensitivity with respect to the \POD energy parameter}
In this section, we study the sensitivity of the results with respect to the \POD energy selection parameter $\calE(r)=1-\varrho$ (see \eqref{eqn:POD:basisEnergy}) for $\varrho\in \{10^{-12}, 10^{-10}, 10^{-8} \}$ and fixed error tolerances $\varepsilon=\epsilon=10^{-2}$. The results are depicted in \Cref{fig:mpctable_PODsensi}. By increasing $\varrho$, we cut more information out of the \POD modes, resulting in a smaller average basis size in \Cref{fig:mpctable_PODsensi}. As a result, the overestimation of the error estimators becomes more apparent, and the error criterion might not be reachable. This effect is more severe for the \ROM-\ROM-\MPC that uses the cheap estimators, leading to a high number of \ROM updates and thus a reduction in speed-up for $\varrho= 10^{-10}$. Especially for $\varrho= 10^{-12}$ triggering with $433$ model updates, the \FOM-\MPC is about twice as fast as the \ROM-\ROM-\MPC. The \FOM-\ROM variant in combination with the expensive-to-evaluate error estimators is more robust against the overestimation. Thus, for bigger $\varrho$, \FOM-\ROM-\MPC obtains smaller basis sizes in combination with a similar amount of model updates, leading to increased computational performance.
\begin{table}[h!] 
    \scriptsize
	\centering 
    \caption{\ReA{Performance comparison of the MPC schemes for fixed update tolerances $(\varepsilon,\epsilon)\in\{ (10^{-2},10^{-2})$ and different \POD tolerances $\varrho$}}
	\label{fig:mpctable_PODsensi}
	\begin{tabular}{lcccccccc}\toprule
		Algorithm & $e_u$ & $e_\state$ & $e_y$ & $e_J$ &\makecell{ time [s] \\ (min/avg./max)} & \makecell{speed-up\\(min/avg./max) } & avg. $\stateDimRed$ & \makecell{\#\ROM \\updates} \\ 
  \midrule
		\FOM & -  & -  &  - & - &151/154/158 & - & - & - \\
     \midrule
     $\varrho=10^{-12}$ &  &   &   &  &&  &  &  \\
     \midrule
		\FOM-\ROM   &$6.1\cdot 10^{-4}$ & $7.7\cdot 10^{-5}$ & $7.5\cdot 10^{-5}$ & $7.7\cdot 10^{-5}$ & 35/36/37 &4.0/4.1/4.4&61&2\\ 
        \ROM-\ROM   &$4.4\cdot 10^{-6}$ & $1.9\cdot 10^{-4}$ & $9.7\cdot 10^{-7}$ & $7.1\cdot 10^{-7}$ & 15/16/18 &9/11/13.7&80&5\\
        \midrule
     $\varrho=10^{-10}$ &  &   &   &  &&  &  &  \\
      \midrule
		\FOM-\ROM   &$5.5\cdot 10^{-4}$ & $6.5\cdot 10^{-5}$ & $6.3\cdot 10^{-5}$ & $2.9\cdot 10^{-5}$ & 33/35/38 &3.9/4.1/4.3& 47 &2\\ 
        \ROM-\ROM  &$2.4\cdot 10^{-5}$ & $2.1\cdot 10^{-5}$ & $1.5\cdot 10^{-7}$ & $1.5\cdot 10^{-7}$ & 26/27/28 &5.7/6.1/6.9&57&23\\
        \midrule
         $\varrho=10^{-8}$ &  &   &   &  &&  &  &  \\
      \midrule
		\FOM-\ROM   &$7.3\cdot 10^{-4}$ & $2.0\cdot 10^{-4}$ & $2.0\cdot 10^{-4}$ & $2.2\cdot 10^{-4}$ & 33/34/36 &4.4/4.7/5.4&34&3\\ 
        \ROM-\ROM  &$1.3\cdot 10^{-5}$ & $1.8\cdot 10^{-6}$ & $9.2\cdot 10^{-7}$ & $8.3\cdot 10^{-7}$ & 213/226/245 &0.65/0.74/0.98&25&433\\
		\bottomrule
	\end{tabular}
\end{table}
}
\section{Conclusion}\label{sec:conclusion}
We presented an efficient framework for model predictive control of linear switched evolution equations based on Galerkin reduced-order modeling techniques. First, we provided optimality conditions for the \MPC subproblems that allow box constraints and $L^1$ regularization. We show that, due to switching in the operator in front of the time derivative, jumps at the switching times arise for the adjoint state. Based on these optimality conditions, we constructed a posteriori error estimates for the reduced \MPC feedback law and the closed-loop state. These estimates show that the \ROM-\MPC trajectory evolves within a neighborhood of the true \MPC trajectory, whose size is controlled by the quality of the \ROM. Further, these estimates allow us to perform the \MPC optimization procedure with a computational cost that, up to a few model updates through \FOM-\MPC subproblems resolution, is independent of the size of the \FOM. Finally, we validate the theoretical findings with numerical experiments, emphasizing the contrast between expensive-to-evaluate estimates and their inexpensive-to-evaluate upper bounds.

In future work, we plan to integrate our error estimation procedure with balanced truncation approaches by incorporating a priori output error bounds. This should allow us to have sharper, inexpensive error estimates. We also want to extend our techniques with the concepts presented in \cite{grune2021performance,DieG23} to directly assess the \MPC performance in the economic and stabilizing setting. Finally, the \ROM and \MPC frameworks are well-suited to be coupled with contour integral methods (\CIM) \cite{GugLM21,GugM23} for efficient and accurate time integration; our goal is to efficiently combine the \MPC optimization with the \CIM formulation for \ROM. 
\section*{Acknowledgements} 
MK would like to thank Martin Grepl for helpful advice about the a-posteriori error estimation and Behzad Azmi for productive discussion about the non-smooth optimality conditions.
MK and MM acknowledge funding by the BMBF (grant no.~05M22VSA). The research of BU was mainly conducted while he was affiliated with the University of Stuttgart. BU acknowledges funding by the Deutsche Forschungsgemeinschaft (DFG, German Research Foundation) under Germany's Excellence Strategy - EXC 2075 - 390740016 and project-ID 258734477 -- SFB 1173. MM and BU acknowledge the support of the Stuttgart Center for Simulation Science (SimTech).

\section*{Declarations}


\subsection*{Code availability}
The code for the numerical experiments is available at \url{https://doi.org/10.5281/zenodo.14443259}.

\subsection*{Author contribution}
M.K. developed the main analytical ideas, conducted the numerical experiments, and prepared the initial manuscript. B.U. and M.M. contributed to the formatting and conceptualization of the main manuscript as well as the development of the core ideas. S.V. supervised and proofread the manuscript. B.U. and S.V. provided funding.

\bibliographystyle{plain-doi}
\bibliography{journalabbr.bib,biblio.bib}

\appendix
\section{Properties of the switched inner product}
\begin{lemma}[Integration by parts]\label{lemma:S-IBP}
    Let $\switch\in\addSwitching$ and $\state,\adState\in \X_{n,\switch}$. Let $\calT_n$ be the set of switching times as in \Cref{ass:switchedSystem}\,\ref{ass:switchedSystem:switching} and assume that the bilinear forms $m_i$ satisfy \Cref{ass:switchedSystem}\,\ref{ass:switchedSystem:M}. Then
    \begin{align}    \label{eqn:integrationByPartsM}
    \begin{aligned}
        \int_{\timeInt_n} m_{\switch}(\dot{\state},\adState)\dt &=  m_{\switch(T_n)}(\state(T_{n}),\adState(T_{n}))-m_{\switch(t_n)}(\state(t_{n}),\adState(t_{n}))\\
        &+\sum_{t_i\in \calT} \left(m_{\switch(t^-_i)}(\state(t^-_{i}),\adState(t^-_{i}))-m_{\switch(t^+_i)}(\state(t_{i}^+),\adState(t^+_{i}))\right) -\int_{\timeInt_n} m_{\switch}(\dot{\adState},\state)\dt .
        \end{aligned}
    \end{align}
\end{lemma}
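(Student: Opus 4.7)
The plan is to reduce the formula to the classical integration-by-parts formula on each of the subintervals $(t_{n,i},t_{n,i+1})$ on which $\switch$ is constant, and then telescope the boundary contributions.

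First, I would fix an index $i\in\{0,\ldots,N-1\}$ and set $k_i \vcentcolon= \switch(t)$ for $t\in(t_{n,i},t_{n,i+1})$ (well-defined since $\switch\in\addSwitching$ is piecewise constant). By the definition of $\X_{n,\switch}$, both restrictions $\state|_{(t_{n,i},t_{n,i+1})}$ and $\adState|_{(t_{n,i},t_{n,i+1})}$ lie in $\X_{n,\switch,i}$, which continuously embeds into $C([t_{n,i},t_{n,i+1}],H)$. Since $m_{k_i}$ is a time-independent symmetric continuous bilinear form on $H$ (Assumption~\ref{ass:switchedSystem}\,\ref{ass:switchedSystem:M}), the standard integration-by-parts identity for Gelfand triples (see e.g.~\cite[Ch.~3, Lem.~1.2]{LioM72}) applied to the inner product $m_{k_i}$ yields
\begin{equation*}
    \int_{t_{n,i}}^{t_{n,i+1}} m_{k_i}(\dot\state,\adState)\dt = m_{k_i}(\state(t_{n,i+1}^-),\adState(t_{n,i+1}^-)) - m_{k_i}(\state(t_{n,i}^+),\adState(t_{n,i}^+)) - \int_{t_{n,i}}^{t_{n,i+1}} m_{k_i}(\dot\adState,\state)\dt.
\end{equation*}
Here I use the symmetry of $m_{k_i}$ to swap $m_{k_i}(\state,\dot\adState) = m_{k_i}(\dot\adState,\state)$.

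Second, I would sum this identity over $i=0,\ldots,N-1$. The left- and right-hand integrals reassemble to $\int_{\timeInt_n} m_\switch(\dot\state,\adState)\dt$ and $\int_{\timeInt_n} m_\switch(\dot\adState,\state)\dt$, respectively, since $k_i = \switch(t)$ on the $i$-th subinterval. The boundary terms collect into
\begin{equation*}
    \sum_{i=0}^{N-1} \Big[ m_{\switch(t_{n,i+1}^-)}(\state(t_{n,i+1}^-),\adState(t_{n,i+1}^-)) - m_{\switch(t_{n,i}^+)}(\state(t_{n,i}^+),\adState(t_{n,i}^+)) \Big],
\end{equation*}
where I used $k_i = \switch(t_{n,i}^+) = \switch(t_{n,i+1}^-)$ to put each term in the form appearing in~\eqref{eqn:integrationByPartsM}.

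Third, I would peel off the contributions at the two endpoints and regroup the remaining terms as a sum over interior switching times. Concretely, the $i=0$ term contributes $-m_{\switch(t_n^+)}(\state(t_n),\adState(t_n))$ and the $i=N-1$ term contributes $m_{\switch(T_n^-)}(\state(T_n),\adState(T_n))$; by Assumption~\ref{ass:switchedSystem}\,\ref{ass:switchedSystem:switching} (specifically $t_n,T_n\notin\calT_n$) the signal $\switch$ is continuous at $t_n$ and $T_n$, so these boundary values agree with $m_{\switch(t_n)}$ and $m_{\switch(T_n)}$. The remaining terms, for $i=1,\ldots,N-1$, combine pairwise (the ``$t_{n,i+1}^-$'' contribution from index $i-1$ with the ``$t_{n,i}^+$'' contribution from index $i$, re-indexed) to give exactly the sum over $t_i\in\calT_n$ of the jump terms in \eqref{eqn:integrationByPartsM}.

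I do not expect any serious obstacle: the only care needed is bookkeeping of the one-sided limits of $\switch$ at the switching times and the use of symmetry of $m_i$, together with the observation that $\state$ and $\adState$ need not admit a global weak time derivative on $\timeInt_n$ (which is exactly why the proof must be done piecewise and why the jump terms appear). The formula is essentially a bookkeeping consequence of the classical identity applied mode-by-mode.
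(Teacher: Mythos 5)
Your proposal is correct and follows essentially the same route as the paper's proof: split $\timeInt_n$ into the subintervals on which $\switch$ is constant, apply the classical Gelfand-triple integration-by-parts identity (with the equivalent inner product $m_{k_i}$) on each piece, and regroup the boundary terms using that $t_n,T_n\notin\calT_n$. Your write-up is merely more explicit about the telescoping bookkeeping than the paper's.
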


\begin{proof}
   First, observe that if $m_{\switch}$ and the adjoint state are continuous, the sum in \eqref{eqn:integrationByPartsM} would be zero, thus reducing the statement to standard integration by parts. Splitting $\timeInt_n$ into its sub-intervals with constant mode and applying integration by parts on each sub-interval, we can write
    \begin{align*}
        \begin{aligned}
            \int_{\timeInt_n} m_{\switch}(\dot{\state},\adState)\dt\;=&\;\sum_{i=0}^{N-1}\int_{t_i}^{t_{i+1}} m_{\switch}(\dot{\state},\adState)\dt\\
            =&\;\sum_{i=0}^{N-1}\big( m_{\switch(t^-_{i+1})}(\adState(t^-_{i+1}),\state(t_{i+1}^-)) - m_{\switch(t^+_i)}(\adState(t^+_{i}),\state(t_{i}^+))\big)-\int_{\timeInt_n} m_{\switch}(\dot{\adState},\state)\dt
        \end{aligned}
    \end{align*}
   The claim follows by rearranging the sum and from the fact that $t_n$ and $T_n$ have no switching points (see \Cref{ass:switchedSystem}\,\ref{ass:switchedSystem:switching}).
\end{proof}
\ReA{
\section{Complexity analysis of the \MPC-schemes}\label{app:sec_complexity}
In this section, we compare the online complexity of the proposed \MPC-schemes (\FOM-\MPC (\Cref{alg:FOM-MPC}), \FOM-\ROM-\MPC (\Cref{alg:ROM-MPC}), and \ROM-\ROM-\MPC (\Cref{alg:ROMROM-MPC}) per \MPC iteration $n\in\N$, which we denote by $\costFOM,\costFOMROM,\costROMROM$, respectively. We restrict ourselves to evaluating the complexity as a function of (stationary) \FOM linear system solves, which is denoted by $\LS$, since the number of \LS (and the computation of the \POD, which is counted separately) is the most expensive part in each algorithm. Let $K_\delta,K_T\in \N$ be the number of discrete time steps corresponding to the sampling time $\delta$ and prediction horizon $T$, respectively.

The computational cost of the \FOM-\MPC per \MPC-step $n\in \N$ consists of solving the \FOM-\OCP, which in turn consists (in the setting described in \Cref{subsec:computationalSetup}) of applying the proximal gradient method leading to one gradient evaluation $\nabla F_n(u_k)$ for the iterate $u_k$ per inner iteration $k\in \N$, consisting of the simulation of the state and the adjoint state over $K_T$ steps (see \eqref{eq: state mpc subproblem gradient}). Assuming $k^*\in \N$ proximal gradient steps until convergence, we obtain
\begin{equation}
    \costFOM= k^*2K_T\LS.
\end{equation}
Before we consider the \ROM schemes, we consider the cost of updating the \ROM in \Cref{line:alg:ROMROMupdate}, which we call $\costUpdate$. Updating the \POD model consists of computing the \POD basis in \eqref{eqn:PODminimization}, which is done using singular value decomposition (\SVD). Applying the economic SVD on a snapshots matrix $\mathbf Y \in \R^{N\times K_s}$ for some the \FOM dimension $N\in\N$ and $K_s\in \N$ has a cost of $\costPOD\approx\calO(NK_s^2)$ if $N\geq K_s$, which is satisfied, since in the \MPC setting it holds $K_s=m_s K_T$, where $K_T$ is small and $m_s\in \N$ is also a small number indicating the number of snapshots of length $K_T$ used (in the setting of \Cref{subsec:computationalSetup} we have $m_s\leq14$, $K_T=20$, and $N=5304$). If the cheap error estimators $\tilde \Delta_A,\tilde \Delta_B$ are considered, an offline-online decomposition of the error estimator (see \cite{Hesthaven2016}) is considered, which depends on \LS (for computing the Rietz representatives per affine component of the residual) that scale with the dimension $r\in \N$ of the reduced space $\Vred$, the number of switching modes $\nrModes$ and the input and output dimensions $\inpVarDim,\outVarDim\in \N$. Hence, for the offline assembly of the state error estimator, we have $\nrModes\inpVarDim \LS$ for the operator $\calB_\switch$, and $2r\nrModes \LS$ for the operators $\calM_\switch$,  $\calA_\switch$ (see \eqref{eq:OPLSS}). For the offline assembly of the adjoint error estimator, we have $\nrModes\outVarDim \LS$ for the operator $\calC_\switch$, and $r\nrModes \LS$ for $\calA'_\switch$. If the expensive error estimators $\Delta_A,\Delta_B$ are considered, no offline-online decomposition is performed, leading to
\begin{equation}\label{eq:app:costupdate}
    \costUpdate =
    \begin{cases}
        \costPOD &\text{if }\Delta_u\in \{\Delta_A,\Delta_B\},\\
       \costPOD+\nrModes(\inpVarDim+\outVarDim+3r)\LS &\text{if }\Delta_u\in \{\tilde \Delta_A,\tilde \Delta_B\}.
	\end{cases}
\end{equation}
Note that, for simplicity, we neglect the cost of constructing the \ROM via projection onto $\tilde V$. After the model is constructed, the online cost for evaluating the error estimator is given as 
\begin{equation}\label{eq:app:costest}
    \costEst =
    \begin{cases}
        2K_T\LS &\text{if }\Delta_u\in \{\Delta_A,\Delta_B\},\\
       0 &\text{if }\Delta_u\in \{\tilde \Delta_A,\tilde \Delta_B\}.
	\end{cases}
\end{equation}
Thus, \eqref{eq:app:costupdate} shows that using the cheap estimators has a higher model update cost, while ensuring a cheaper evaluation after construction of the \ROM in \eqref{eq:app:costest}.

Next, turning to the \ROM-\MPC schemes, we have to distinguish whether the update criterion in \Cref{alg:line:ErrorEstCrit} of both algorithms is triggered or not. First, we start with the \FOM-\ROM-\MPC and assume that the  \Cref{alg:line:ErrorEstCrit} is not active. Using the cheap error estimators ($\tilde \Delta_A,\tilde \Delta_B$), $\costFOMROM$ consists of applying the reduced controller onto the \FOM system in \Cref{algo:line:applyROM} (see also \eqref{eq:FOMROMiter}) for $K_\delta$ time steps, leading to $\costFOMROM= \costEst + K_\delta\LS= K_\delta\LS$ (by using \eqref{eq:app:costest}). Conversely, using the expensive error estimators ($ \Delta_A, \Delta_B$) leads with \eqref{eq:app:costest} to a cost of $\costFOMROM= \costEst= 2K_T\LS$. In this case, there is no explicit cost for applying the reduced controller onto the full system, since for the evaluation of the expensive error estimator, we already compute the new \FOM state at time step $K_\delta$ (see the discussion at the beginning of \Cref{subsec:apostStateAdjoint}). If \Cref{alg:line:ErrorEstCrit} is active, then we add up the cost of error estimation, \FOM-\OCP solving, and model update, such that we obtain
\begin{equation}\label{eq:app:costFOMROM}
\costFOMROM= 
\begin{cases}
        \costEst + \costFOM + \costUpdate &\text{if \Cref{line:alg:ROMROMupdatemechasim} is active},\\
         \begin{cases}
        2K_T\LS &\text{if }\Delta_u\in \{\Delta_A,\Delta_B\},\\
        K_\delta\LS & \text{if } \Delta_u\in \{\tilde \Delta_A,\tilde \Delta_B\}.
	\end{cases} &\text{else}.
	\end{cases}
\end{equation}
Thus, the \FOM-\ROM-\MPC is never online-efficient, i.e., independent of \LS even after model construction.

Turning to \ROM-\ROM-\MPC, note that we always use cheap estimators ($\tilde \Delta_A,\tilde \Delta_B$) in this case. If \Cref{line:alg:ROMROMupdatemechasim} is not active, then \ROM-\ROM-\MPC is online-efficient, that is, the cost of the algorithm is independent of \FOM calculations, and thus $\costROMROM=0$. If \Cref{line:alg:ROMROMupdatemechasim} is active, we obtain the \FOM-\OCP solve together with the update of the \ROM consisting of the \POD together with an assembly of the error estimator. All in all, we obtain
\begin{equation}\label{eq:app:costROMROM}
\costROMROM=
\begin{cases}
        \costFOM + \costUpdate &\text{if \Cref{line:alg:ROMROMupdatemechasim} is active},\\
        0&\text{else}.
	\end{cases}
\end{equation}
All in all, one can expect speed-ups of the \ROM algorithms only if the update mechanism is triggered not in every step and if the number of \FOM optimizer steps $ k^*$ is sufficiently large ($ k^*\geq 2$). Comparing \eqref{eq:app:costROMROM} and \eqref{eq:app:costFOMROM}, one can expect speed-ups of the \ROM-\ROM-\MPC algorithm compared to the \FOM-\ROM-\MPC, if the overestimation of the cheap error estimators does not trigger the update mechanism much more often than the expensive error estimators.
}
\end{document}